\documentclass[journal, onecolumn, 11pt]{IEEEtran}
%\linespread{2}
%\setlength{\voffset}{-0.05in}
%\setlength{\headsep}{25pt}
\usepackage[english]{babel}
\usepackage{xspace}
\usepackage{algorithm,algorithmicx}
\usepackage{tcolorbox}
\usepackage{tikz}

\usepackage{framed}
\usepackage{algpseudocode}
\usepackage{amsthm,nccmath}
\usepackage{amsmath,bm}
\usepackage{bigints}
\usepackage{amssymb}
\usepackage{graphicx}
\usepackage{cite}
\usepackage{enumerate}
\usepackage{hyperref}
% \captionsetup[sub]{font=scriptsize}

\usepackage{float}
% \ifCLASSINFOpdf
% \usepackage[pdftex]{graphicx}
% % \graphicspath{{../pdf/}{../jpeg/}}
% \DeclareGraphicsExtensions{.pdf,.jpeg,.png}
% \else
% \usepackage[dvips]{graphicx}
% \graphicspath{{../eps/}}
% \DeclareGraphicsExtensions{.eps}
% \fi
\usepackage{epstopdf}
\usepackage{soul}
\usepackage{footnote}
\makesavenoteenv{tabular}
\makesavenoteenv{table}
\usepackage{bbm}
\usepackage{amsthm}
\usepackage[makeroom]{cancel}
\usepackage{stmaryrd}
\usepackage{amsmath,bm}
\usepackage{amssymb}
\usepackage{mathtools, cuted}
\usepackage{kantlipsum,setspace}

%\usepackage{caption}
%\captionsetup[table]{font={stretch=1.0}}
%\captionsetup[figure]{font={stretch=1.0}}
% \usepackage{dblfloatfix}
%\usepackage{subcaption}
\setcounter{secnumdepth}{5}

\theoremstyle{plain}
\newtheorem{theorem}{Theorem}[section]
\newtheorem{lemma}[theorem]{Lemma}
\newtheorem*{lemma*}{Lemma}

\newtheorem{cor}{Corollary}
\newtheorem*{cor*}{Corollary}

\theoremstyle{definition}
\newtheorem{defn}{Definition}[section]
\newtheorem*{defn*}{Definition}
\newtheorem{assump}{Assumption}

\theoremstyle{remark}
\newtheorem{rem}{Remark}
\newtheorem*{rem*}{Remark}

\pagestyle{plain} 
\title{Distributed Stochastic Non-Convex Optimization: Momentum-Based Variance Reduction}

\author{\IEEEauthorblockN{Prashant Khanduri$^{\dagger}$, Pranay Sharma$^{\dagger}$,  Swatantra Kafle$^{\dagger}$, Saikiran Bulusu$^{\dagger}$, \\
Ketan Rajawat$^{\ast}$, and Pramod K. Varshney$^{\dagger}$}\\
$^{\dagger}$Department of Electrical Engineering and Computer Science,
Syracuse University, Syracuse, NY\\
$^\ast$Department of Electrical Engineering,
IIT Kanpur, Kanpur, India.\\
Email: \{pkhandur, psharm04, skafle, sabulusu, varshney\}@syr.edu, ketan@iitk.ac.in}

\begin{document}
\maketitle
% As a general rule, do not put math, special symbols or citations
% in the abstract
\begin{abstract}
In this work, we propose a distributed algorithm for stochastic non-convex optimization. We consider a worker-server architecture where a set of $K$ worker nodes (WNs) in collaboration with a server node (SN) jointly aim to minimize a global, potentially non-convex objective function. The objective function is assumed to be the sum of local objective functions available at each WN, with each node having access to only the stochastic samples of its local objective function. In contrast to the existing approaches, we employ a momentum based ``single loop" distributed algorithm which eliminates the need of computing large batch size gradients to achieve variance reduction. We propose two algorithms one with ``\emph{adaptive}" and the other with ``\emph{non-adaptive}" learning rates. We show that the proposed algorithms achieve the optimal computational complexity while attaining linear speedup with the number of WNs. Specifically, the algorithms reach an $\epsilon$-stationary point $x_a$ with $\mathbb{E}\| \nabla f(x_a) \| \leq \tilde{O}(K^{-1/3}T^{-1/2} + K^{-1/3}T^{-1/3})$ in $T$ iterations, thereby requiring $\tilde{O}(K^{-1} \epsilon^{-3})$ gradient computations at each WN. Moreover, our approach does not assume identical data distributions across WNs making the approach general enough for federated learning applications. 
%{\color{red} The 2 expressions here are bounding 2 different quantities. Making them consistent would be better. For example, since $\left[ \mathbb{E}\| \nabla f(x_a) \| \right]^2 \leq \mathbb{E}\| \nabla f(x_a) \|^2$, so expressing both in terms of $\mathbb{E}\| \nabla f(x_a) \|$ would be convenient.}
\end{abstract}

\begin{IEEEkeywords}
Non-convex stochastic optimization, worker-server architecture, distributed optimization, federated learning, momentum based variance reduction
\end{IEEEkeywords}

\IEEEpeerreviewmaketitle
\section{Introduction}
\label{sec: Intro}
Unconstrained stochastic non-convex optimization problems aim to minimize a function $f: \mathbb{R}^d \to \mathbb{R}$ denoted as:
\begin{align}
\label{eq: CentralizedProb}
  \min_{x \in \mathbb{R}^d} \Big\{ f(x) := \mathbb{E}_{\xi \sim \mathcal{D}}[ f(x; \xi)] \Big\}.
\end{align}
where $f(x; \xi)$ represents a sample function (potentially non-convex), specifically, a sample of $f$ drawn from distribution  $\mathcal{D}$, i.e. $\xi \sim \mathcal{D}$. This setting is sometimes also referred to as the online setting \cite{Chang_Hong_Arxiv_2020}, where stochastic samples of the function $f$ are observed in a streaming setting. Non-convex stochastic optimization problems of the form given in \eqref{eq: CentralizedProb} cover a myriad of machine learning applications \cite{Bottou_SIAM_2018_Review}. Such problems often arise in training of deep neural networks \cite{goodfellow16book}, matrix completion \cite{chi19tsp}, principal component analysis \cite{netrapalli14nips, kang15icdm}, tensor decomposition \cite{ge15colt}, inference in graphical models \cite{xu17nips} and maximum likelihood estimation with hidden variables to name a few. Moreover, in the current age of Big-Data, the learning systems on a consistent basis have to draw inferences with large (potentially infinite) or streaming data samples. For such models, it is not always feasible to implement the algorithms and perform all the computations at a single central node \cite{Xing_Engineering_2016}. To alleviate this shortcoming and to speedup the computations, modern machine learning applications utilize the worker-server architecture, where multiple worker nodes (WNs) in collaboration with a server node (SN) jointly aim to solve \eqref{eq: CentralizedProb}. The worker-server architecture entails off-loading data as well as computations to the WNs which helps not only in speeding up the algorithms as well as providing some level of data privacy \cite{Leaute_JAIR_2013}. The SN acts as the central server responsible for sharing the parameters between the WNs. Assuming that there are $K$ WNs present in the network, problem \eqref{eq: CentralizedProb} for a distributed setup can be reformulated as:
\begin{align}
\label{eq: DecentralizedProb}
     \min_{x \in \mathbb{R}^d} ~& \bigg\{ f(x) := \frac{1}{K} \sum_{k = 1}^K  \mathbb{E}_{\xi \sim \mathcal{D}^{(k)}}[ f^{(k)}(x; \xi)]\bigg\}.
\end{align}
where $\mathcal{D}^{(k)}$ represents the distribution of the samples at the $k$-th WN for $k \in [K]$. Since the distributions $\mathcal{D}^{(k)}$ can be different across different WNs, \eqref{eq: DecentralizedProb} also models the popular federated learning models \cite{Konevcny_Arxiv_2016}. Finding a global solution of the non-convex problem \eqref{eq: CentralizedProb} (or \eqref{eq: DecentralizedProb}) is, in general, an NP-hard problem. Therefore, the solution methodologies for finding the minimizer of $f$ generally rely on iterative methods for finding an approximate solution. These methods are often designed to find one of the approximate stationary points $x_a$ (see Definition \ref{Def: StationaryPt}) of the function $f$ such that $\mathbb{E}[\nabla f(x_a)]$ is close to $0$. 

The goal of this work is to design such algorithms for solving \eqref{eq: DecentralizedProb} with the help of $K$ WNs and a SN, which utilizes only the stochastic gradient information to find an approximate stationary point of function $f$. The de facto standard algorithm used in large scale machine learning for such problems is stochastic gradient descent (SGD) \cite{Ghadimi_Siam_2013_SGD}, which can be written for problem \eqref{eq: CentralizedProb} as:
$$x_{t + 1} = x_t - \eta_t\nabla f(x_t; \xi_t)$$
where $\{x_t\}_{t=0}^T$ are the iterates returned from SGD,  $\nabla f(x_t; \xi_t)$ is the direction of the stochastic gradient at $t$th time with $\xi_t \sim \mathcal{D}$ and $\eta_t$ is the step size (sometimes also referred to as the learning rate). The convergence performance of the SGD is sensitive to the choice of the step-size \cite{Ward_Arxiv_2019_adagrad}  and it has been shown to converge only under restrictive conditions on the step-size \cite{Ghadimi_Siam_2013_SGD, Bottou_SIAM_2018_Review}. To remedy this sensitivity and make SGD robust to the choice of parameters ``\emph{adaptive}" SGD methods are often used where the step-size is chosen on the fly using stochastic gradient information of the current and the past samples \cite{Duchi_JMLR_2011_adaptive,Kingma_Arxiv_2014_adam,Ward_Arxiv_2019_adagrad,Tieleman_Hinton_Lecture_2012RMSProp,Cutkosky_NIPS2019}. In this work, we propose one such ``\emph{adaptive}" method for distributed non-convex stochastic optimization which utilizes the current stochastic gradient information to design the step-sizes. The proposed algorithm is a substantial extension of the centralized algorithm STORM proposed in \cite{Cutkosky_NIPS2019} for worker-server types of architecture. The major advantage of the proposed algorithm is that it achieves the optimal computational complexity (please see Definition \ref{Def: ComputationComplexity}) while achieving linear speed-up with the number of WNs without computing any batch gradients at each WN in contrast to the current state-of-the-art methods which almost exclusively rely on computing mega batch-size gradients at each WN to guarantee optimal convergence guarantees. Moreover, the proposed algorithm executes in a ``single loop" and achieves variance reduction by designing the descent direction based on momentum-based constructions \cite{Dinh_Arxiv_2019,Cutkosky_NIPS2019}. In contrast, the most popular variance reduction based methods utilize double loop structure to achieve variance reduction, with one loop designed for variance reduction and the other for speeding-up the algorithm \cite{Johnson_NIPS_2013, Reddi_ICML_2016,Fang_NIPS_2018_spider}. 
\subsection{Related Work and Contributions}
\subsubsection{Centralized Algorithms}
Due to the inherent variance of the stochastic gradients, the SGD proposed in \cite{Ghadimi_Siam_2013_SGD, Bottou_SIAM_2018_Review}, has been shown to converge to the $\epsilon$-stationary point at a modest rate of  $O(T^{-1/4})$ for non-convex smooth objective functions. This rate was improved to $O(T^{-3/10})$ in non-convex SVRG
\cite{Zhu_Hazan_ICML_2016, Reddi_ICML_2016} and in SCSG \cite{Lei_Jordan_SCSG}, by using the ideas initially developed in SVRG \cite{Johnson_NIPS_2013} for strongly convex objectives. Based on similar ideas, the convergence guarantees of stochastic gradient based methods for smooth non-convex objective functions were further improved to $O(T^{-1/3})$ in SPIDER \cite{Fang_NIPS_2018_spider}, SpiderBOOST \cite{Wang_NIPS_2019_SpiderBoost} and Nested SVRG proposed in \cite{Zhou_NIPS_2018}. This convergence rate was later shown to be optimal in \cite{Arjevani_Carmon_2019_LowerBounds}. As discussed earlier, these algorithms relied on a double loop structure to achieve variance reduction by computing large batch size gradients and, thereby, improving upon the performance of SGD methods. To circumvent the need for computing these large batch gradients, Hybrid-SGD \cite{Dinh_Arxiv_2019} and STORM \cite{Cutkosky_NIPS2019} algorithms were developed recently which execute in a single loop and achieve variance reduction using momentum based gradient updates. These algorithms were shown to achieve the same optimal convergence rate of $O(T^{-1/3})$ (up to a logarithmic factor in \cite{Cutkosky_NIPS2019}) while computing only the stochastic gradients. Note that Hybrid-SGD in \cite{Dinh_Arxiv_2019} requires computation of one mega batch size gradient compared to \cite{Zhu_Hazan_ICML_2016, Reddi_ICML_2016, Lei_Jordan_SCSG,Fang_NIPS_2018_spider, Zhou_NIPS_2018, Wang_NIPS_2019_SpiderBoost}, where mega batch size gradients are computed for each outer loop. 

Moreover, as pointed out earlier, SGD based methods are not only sensitive to the choice of the parameters but are also convergent only under restrictive assumptions. To robustify SGD based algorithms, adaptive algorithms like AdaGrad, ADAM and RMSProp \cite{Duchi_JMLR_2011_adaptive, Kingma_Arxiv_2014_adam, Tieleman_Hinton_Lecture_2012RMSProp} have been proposed where the step size naturally adapts to the past and present stochastic gradients of the objective function. In the non-convex setting, these methods have been extended in 
\cite{Ward_Arxiv_2019_adagrad, Li_Orabona_Arxiv_2018_convergence, Reddi_Arxiv_2019_Adam} and more recently in \cite{Cutkosky_NIPS2019}. In contrast to other methods, STORM proposed in \cite{Cutkosky_NIPS2019} has shown that the state-of-the-art convergence guarantees can be achieved even with adaptive methods when combined with momentum based techniques. 
\subsubsection{Distributed Algorithms}
In distributed networks, linear speedup implies that the total IFO calls (please see Definition \ref{Def: ComputationComplexity}) required to achieve an $\epsilon$-stationary point (please see Definition \ref{Def: StationaryPt}) at each WN are reduced by a factor equal to the number of WNs, $K$, present in the network compared to a centralized system. In distributed setups, primary goal is to achieve linear speedup with the number of WNs present in the network. Distributed (also referred to as parallel) versions of SGD for minimizing non-convex objectives was initially proposed in \cite{Li_Smola_NIPS_2014_CommunicationEfficient,Dean_NIPS_2012large}. To further reduce the communication complexity, restarted versions of SGD for non-convex problems, where the communication was performed less frequently was proposed in \cite{Yu_Zhu_2018parallel, Wang_Joshi_Arxiv_2018cooperative, Yu_Jin_Arxiv_2019linear}. In contrast to \cite{Yu_Zhu_2018parallel} and \cite{Wang_Joshi_Arxiv_2018cooperative}, the authors of \cite{Yu_Jin_Arxiv_2019linear} employed momentum based techniques to further improve the communication efficiency of distributed SGD based algorithms. All the above algorithms achieved a convergence rate of $O(K^{-1/4}T^{-1/4})$, thereby achieving linear speed up with the number of WNs present in the network. In this work, we propose a momentum based SGD algorithm which improves upon the convergence rate of distributed as well as restarted versions of SGD and also achieves linear speedup with the number WNs. The proposed algorithm outperforms the existing algorithms by achieving variance reduction via momentum based descent direction construction.

As pointed out in \cite{Sharma_Arxiv_2019}, the literature on distributed variance reduction methods has almost exclusively focused on convex and strongly convex problems with a few exceptions including \cite{Sharma_Arxiv_2019} and \cite{Khanduri_Arxiv_2019}. In \cite{Khanduri_Arxiv_2019}, a robust distributed algorithm was developed with convergence rate of $O(K^{-1/5}T^{-3/10})$ where the responsibility of computing the mega size batch gradients was given to the WNs. This convergence rate was uniformly improved to $O(K^{-1/3}T^{-1/3})$ in PR-SPIDER proposed in \cite{Sharma_Arxiv_2019}, which is not only optimal \cite{Arjevani_Carmon_2019_LowerBounds}, but is also shown to achieve state-of-the-art communication complexity. However, similar to the centralized algorithms, this convergence performance was achieved at the expense of computing mega batch size gradients. Moreover, as pointed out earlier, we utilize the ideas developed in STORM proposed in \cite{Cutkosky_NIPS2019}, and, therefore, design an adaptive algorithm which utilizes momentum based techniques to achieve variance reduction. Specifically, in this work we propose the first distributed algorithm for stochastic non-convex optimization which:
\begin{itemize}
\item Proposes a single-loop algorithm which eliminates the need for computing stochastic gradients over large batches. The algorithm computes stochastic gradients only to update the descent direction. 
    \item  The proposed algorithm is shown to achieve linear speed up with the number of WNs in the network. 
    \item We first propose the algorithm which uses adaptive step sizes to achieve variance reduction. Then we propose a special case of the adaptive algorithm with non-adaptive step sizes, which requires a constant order less communication compared to the adaptive algorithm while providing the same convergence guarantees.  
\end{itemize}
\subsection{Paper Organization}
In Section \ref{sec: Problem}, we discuss the problem along with the corresponding assumptions and definitions. In Section \ref{sec: AD-STORM}, we first introduce the Adaptive algorithm ``AD-STORM" and then in Section \ref{sec: AD-STORM_Convergence} we present its convergence guarantees. Then in Section \ref{sec: D-STORM}, we propose the non-adaptive version of the algorithm ``D-STORM" along with its associated convergence guarantees. 
% in Section \ref{subsec: D-STORM_Convergence}.
Finally, in Section \ref{sec: Conclusion} we conclude the paper. For improved readability, some of the proofs and results are provided in the Appendix. Below, we describe the notations used in the paper. 

\subsection{Notations}
The expected value of a random variable $X$ is denoted by $\mathbb{E}[X]$ and its conditional expectation conditioned on an Event $A$ is denoted as $\mathbb{E}[X| \text{Event}~A]$. We denote by $\mathbb{R}$ (and $\mathbb{R}^d$) the set of real numbers (and $d$-dimensional real-valued vectors). The set of natural numbers is denoted by $\mathbb{N}$. Given a positive integer $K \in \mathbb{N}$, $[K] \triangleq \{1,2, \ldots, K\}$. Throughout the manuscript, $\| \cdot \|$ denotes the $\ell_2$-norm and $\langle \cdot, \cdot \rangle$ the Euclidean inner product.
 
\section{Problem Formulation}
\label{sec: Problem}
The objective is to minimize $f: \mathbb{R}^d \to \mathbb{R}$
\begin{align*}
     \min_{x \in \mathbb{R}^d} ~& \bigg\{ f(x) := \frac{1}{K} \sum_{k = 1}^K f^{(k)}(x) \bigg\},
\end{align*}
where $K$ is the number of WNs. The $K$ functions, $f^{(k)}: \mathbb{R}^d \to \mathbb{R}$, are distributed among $K$ WNs with each node having access to stochastic samples of the locally available function. The function at the $k$th node is given as
\begin{align*}
    f^{(k)}(x) := \mathbb{E}_{\xi \sim \mathcal{D}^{(k)}}[ f^{(k)}(x; \xi)].
\end{align*}
where $\mathcal{D}^{(k)}$ represents the distribution of the samples at the $k$th WN. 
\begin{assump}[Gradient Lipschitz Continuity]
\label{Ass: Lip_Smoothness}
All the functions $f^{(k)}(\cdot, \xi^{(k)})$ with $\xi^{(k)} \sim \mathcal{D}^{(k)}$, for all $k \in [K]$ are assumed to be $L$-smooth, i.e.,
\begin{align*}
    \mathbb{E} \| \nabla f^{(k)} (x ; \xi^{(k)}) -  \nabla f^{(k)} (y ; \xi^{(k)}) \| \leq L  \| x - y \|~~~\text{for all}~x,y \in \mathbb{R}^d.
\end{align*}
\end{assump}

\begin{assump}[Unbiased Bounded Gradient and Variance Bound]
\label{Ass: Unbiased_Var_Grad}
For all the functions $f^{(k)}(x , \xi^{(k)})$ with $\xi^{(k)} \sim \mathcal{D}^{(k)}$, for all $k \in [K]$ we have:
\begin{enumerate}
    \item Unbiased gradient: We assume that each stochastic gradients computed at each WN is an unbiased estimate of the corresponding function of the WN, i.e., 
    $$\mathbb{E}[\nabla f^{(k)} (x ; \xi^{(k)})] = \nabla f^{(k)} (x).$$
    Moreover, we assume that each node chooses samples $\xi^{(k)}$ independently across all $k \in [K]$. 
    
    \item Variance bound: We assume that the variance of the stochastic gradients computed at each node is universally bounded as:
    $$\mathbb{E}\| \nabla f^{(k)} (x ; \xi^{(k)}) - \nabla f^{(k)} (x) \|^2 \leq \sigma^2.$$
    \item Bounded Gradient: Finally, to design the adaptive algorithm AD-STORM, we additionally need the stochastic gradients to be bounded at each WN, i.e.,
    $$\| \nabla f^{(k)}(x; \xi^{(k)}) \| \leq G.$$
\end{enumerate}
\end{assump}

\begin{defn}[$\epsilon$-stationary Point]
\label{Def: StationaryPt}
  A point $x$ is called $\epsilon$-stationary if $\| \nabla f(x) \| \leq \epsilon$. Moreover, a stochastic algorithm is said to achieve an $\epsilon$-stationary point in $t$ iterations if $\mathbb{E}[\| \nabla f(x_t) \|] \leq \epsilon$, where the expectation is over the randomness of the algorithm until time instant $t$.
	\end{defn}
	
	\begin{defn}[Computation complexity]
\label{Def: ComputationComplexity}
We assume an Incremental First-order Oracle (IFO) framework, where given a sample $\xi^{(k)}$ at the $k$th node and iterate $x$, the oracle returns $(f^{(k)}(x; \xi^{(k)}), \nabla f^{(k)}(x ; \xi^{(k)}) )$. Each access to the oracle is counted as a single IFO operation. We measure the computational complexity in terms of the total number of calls to the IFO each WN makes to achieve an $\epsilon$-stationary point given in Definition \ref{Def: StationaryPt}.
	\end{defn}

	\section{Adaptive Distributed Algorithm: AD-STORM}
	\label{sec: AD-STORM}
	\subsection{A Biased Gradient Estimator}
In this work, we use the gradient estimator similar to the one employed in \cite{Cutkosky_NIPS2019} and \cite{Dinh_Arxiv_2019}. The proposed gradient estimator at each node takes a convex combination of the popular SARAH estimator \cite{Nguyen_ICML_2017_SARAH} and the unbiased SGD gradient estimator \cite{Ghadimi_Siam_2013_SGD}. We can write the gradient estimator at node $k \in [K]$ at time instant $t + 1$ as:
\begin{align*}
 d_{t+1}^{(k)} & = a_{t+1} \underbrace{\nabla f^{(k)}(x_{t+1}^{(k)} ; \xi_{t+1}^{(k)})}_{\text{SGD}} + (1 - a_{t+1})     \underbrace{ \big(  d_{t}^{(k)} + \nabla f^{(k)}(x_{t+1}^{(k)} ; \xi_{t+1}^{(k)}) - \nabla f^{(k)}(x_{t}^{(k)} ; \xi_{t+1}^{(k)})\big)}_{\text{SARAH}}\\
 & = \nabla f^{(k)}(x_{t+1}^{(k)} ; \xi_{t+1}^{(k)}) + (1 - a_{t+1})      \big(  d_{t}^{(k)} - \nabla f^{(k)}(x_{t}^{(k)} ; \xi_{t+1}^{(k)}) \big).
\end{align*}
with the parameter $a_{t+1} \leq 1$ as the momentum parameter. Note that similar estimators have been previously used in \cite{Cutkosky_NIPS2019} and \cite{Dinh_Arxiv_2019} in centralized settings where all the data is available at a single node. In contrast to \cite{Sharma_Arxiv_2019, Khanduri_Arxiv_2019, Fang_NIPS_2018_spider, Wang_NIPS_2019_SpiderBoost, Nguyen_ICML_2017_SARAH, Lei_Jordan_SCSG, Zhu_Hazan_ICML_2016}, the gradient estimator proposed above does not require the computation of large batch-size gradients to achieve optimal computational complexity. In contrast, the proposed gradient estimator above computes two stochastic gradients, $\nabla f^{(k)}(x_{t+1}^{(k)}; \xi_{t+1}^{(k)})$ and $\nabla f^{(k)}(x_{t}^{(k)}; \xi_{t+1}^{(k)})$ at each node for $k \in [K]$ at each time instant. A similar estimator used in a centralized setting for the STORM algorithm was shown to achieve optimal convergence guarantees in \cite{Cutkosky_NIPS2019} recently. Next, we discuss the algorithm.
	
\subsection{Algorithm: AD-STORM}
In this section, we propose an adaptive distributed algorithm AD-STORM based on STORM proposed in \cite{Cutkosky_NIPS2019}.

The pseudo code of AD-STORM is presented in Algorithm \ref{Algo_AD-STORM}. After specifying the required parameters, we initialize the algorithm with the same initial iterate, $x_1^{(k)} = \bar{x}_1$, at each WN. Also, each node uses the same initial descent direction, $d_1^{(k)} = \bar{d}_1$, which is constructed using unbiased estimates of the gradients at individual WNs, $\nabla f^{(k)}(x_1^{(k)}; \xi_1^{(k)})$, for all $k \in [K]$ with $\xi_1^{(k)} \sim \mathcal{D}^{(k)}$. Then in Step 5, the WNs share the norm of stochastic gradients computed at each WN with the SN, which is a scalar value. The SN computes aggregated statistic $\bar{G}_t^2$ and sends it back to the WNs. Based on the received statistic $\bar{G}_t^2$, each WN updates the parameter $w_t$ in Step 7 and designs its corresponding step-size $\eta_t$ in Step 8 (note that alternatively Steps 7 and 8 can be preformed at the SN and $\eta_t$ can be send to the WNs instead of $\bar{G}_t^2$). In Step 9 of the algorithm, the individual WNs update their corresponding iterate, $x_t^{(k)}$. Note at this stage that as each node had the same initial iterate, $\bar{x}_1$, and each node uses the same descent direction the updated iterate, $x_{t+1}^{k}$, will be same across all $k \in [K]$. Therefore, we denote  $x_{t+1}^{k} = \bar{x}_{t+1}$ for $t \in [T]$. Next, the momentum parameter, $a_{t+1}$, is updated in Step 10. Using the updated iterate and the momentum parameter, each WN in Step 11 updates the descent direction and forwards it to the SN. The SN aggregates the received descent directions and sends them back to the WNs in Step 12. The process repeats until convergence. 

\begin{rem}
The parameter $w_t$ for $t \in [T]$ is a non-increasing function of time, in contrast to the STORM algorithm proposed in \cite{Cutkosky_NIPS2019}, where the parameter $w_t$ remained fixed over time. Moreover, the parameter $\bar{\kappa}$ given in Theorem \ref{Thm: AD_Convergence_Main} is larger than the one used in STORM. In conclusion, the distributed architecture allows us to choose the step-sizes $\eta_t$ that are larger compared to the centralized case. This choice of increased step sizes and the corresponding analysis shows that AD-STORM (and D-STORM in Section \ref{sec: D-STORM}) is capable of achieving linear speed up with the number of WNs, $K$, present in the network.  
\end{rem}

Next, we provide the convergence guarantees for AD-STORM. 
	\begin{algorithm}[t]
\caption{Adaptive Distributed STORM - AD-STORM}
\label{Algo_AD-STORM}
\begin{algorithmic}[1]
	\State{\textbf{Input}: Parameters: $\bar{\kappa}$, $w_0$ and $c$}
	\State{\textbf{Initialize}:  Iterate $x_1^{(k)} = \bar{x}_1$, descent direction $d_1^{(k)} = \bar{d}_1= \frac{1}{K}\sum_{k=1}^K \nabla f^{(k)}(x_1^{(k)} ; \xi_1^{(k)})$ for all $k \in [K]$, step size $\displaystyle \eta_0^{(k)} = \frac{\bar{\kappa}}{w_0^{1/3}}$.}
	\For{$t = 1$ to $T$}
    	\For{$k = 1$ to $K$}
    	\State{$g_t^{(k)} = \|\nabla f^{(k)}(x_t^{(k)}; \xi_t^{(k)}) \|$\hfill $\to$ Forward $g_t^{(k)}$ to SN}
    		\State{$\bar{G}_t^2  = \frac{1}{K} \sum_{k=1}^K (g_t^{(k)})^2 $\hfill $\to$ Receive $\bar{G}_t^2$ from SN}
    		\State{$w_t = \max \big\{2 G^2, \bar{\kappa}^3L^3 - \sum_{i = 1}^t \bar{G}_i^2,  {\bar{\kappa}^3 c^3}/{L^3} \big\}$}
    	   \State{$\eta_t = \frac{\bar{\kappa}}{\big(w_t + \sum_{i = 1}^t \bar{G}_i^2  \big)^{1/3}}$}
        	\State{$x_{t+1}^{(k)} =  x_{t}^{(k)} - \eta_{t} d_{t}^{(k)}$}
        		\State{$a_{t+1} = c \eta_t^2$}
        		\State{$d_{t+1}^{(k)} = \nabla f^{(k)}(x_{t+1}^{(k)} ; \xi_{t+1}^{(k)}) + (1 - a_{t+1})      \big(  d_t^{(k)} - \nabla f^{(k)}(x_t^{(k)} ; \xi_{t+1}^{(k)}) \big)$ \hfill $\to$ Forward $d_t^{(k)}$ to SN}
        			\State{$d_{t+1}^{(k)} = \bar{d}_{t+1} = \frac{1}{K} \sum_{k=1}^K d_{t+1}^{(k)}$ \hfill $\to$ Receive $\bar{d}_t$ from SN}
	    \EndFor
	\EndFor
\end{algorithmic}
\end{algorithm}

\section{Analysis: AD-STORM}
\label{sec: AD-STORM_Convergence}
The goal of Algorithm \ref{Algo_AD-STORM} is to guarantee:
\begin{align*}
   \mathbb{E} \big\| \nabla f (\bar{x_t}) \big\| =  \mathbb{E} \bigg\| \frac{1}{K} \sum_{k=1}^K \nabla f^{(k)} (\bar{x_t}) \bigg\|  \leq \epsilon ~~\text{where}~~\bar{x}_t = \frac{1}{K}\sum_{k = 1}^K x_t^{(k)}.
\end{align*}
which is the definition of the $\epsilon$-stationary point. 

In this section, we present the analysis of AD-STORM and present its convergence guarantees. As a starting point, we first need the basic descent lemma as proved below. 
\begin{lemma}[Descent Lemma]
\label{Lem: AD_DescentLemma}
For $\eta_t \leq \frac{1}{L}$ and $\bar{e}_t = \bar{d}_t - \nabla f(\bar{x}_{t})$, we have:
\begin{align*}
  \mathbb{E} [f(\bar{x}_{t + 1})]   \leq   \mathbb{E}  \bigg[ f(\bar{x}_{t })  - \frac{\eta_t}{2}   \|\nabla f(\bar{x}_t) \|^2 + \frac{\eta_t}{2}   \| \bar{e}_t  \|^2 \bigg]. 
\end{align*}
\end{lemma}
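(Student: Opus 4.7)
The plan is to apply the standard $L$-smoothness upper bound to $f$ at the averaged iterates $\bar{x}_t$ and $\bar{x}_{t+1}$, and then decompose the resulting inner product via the splitting $\bar{d}_t = \nabla f(\bar{x}_t) + \bar{e}_t$ together with the polarization identity $-2\langle a,b\rangle = \|a-b\|^2 - \|a\|^2 - \|b\|^2$. The step-size restriction $\eta_t \leq 1/L$ will then kill the leftover $\|\bar{d}_t\|^2$ term.

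First I would record the key synchronization fact implicit in Algorithm~\ref{Algo_AD-STORM}: every worker is initialized at the common point $\bar{x}_1$ and, after Step~12, uses the common descent direction $\bar{d}_t$, so $x_{t+1}^{(k)} = \bar{x}_{t+1}$ for every $k \in [K]$ and the averaged update reads $\bar{x}_{t+1} = \bar{x}_t - \eta_t\,\bar{d}_t$. Assumption~\ref{Ass: Lip_Smoothness} (via Jensen's inequality and the unbiasedness in Assumption~\ref{Ass: Unbiased_Var_Grad}) makes each $\nabla f^{(k)}$, hence $\nabla f$, $L$-Lipschitz, yielding the quadratic upper bound
\begin{align*}
f(\bar{x}_{t+1}) \leq f(\bar{x}_t) - \eta_t \langle \nabla f(\bar{x}_t), \bar{d}_t \rangle + \frac{L\eta_t^2}{2}\|\bar{d}_t\|^2.
\end{align*}

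Next I would apply the polarization identity to the cross term, which rewrites
\begin{align*}
-\eta_t \langle \nabla f(\bar{x}_t), \bar{d}_t \rangle
= \frac{\eta_t}{2}\|\bar{e}_t\|^2 - \frac{\eta_t}{2}\|\nabla f(\bar{x}_t)\|^2 - \frac{\eta_t}{2}\|\bar{d}_t\|^2.
\end{align*}
Substituting back and grouping the two $\|\bar{d}_t\|^2$ contributions I get
\begin{align*}
f(\bar{x}_{t+1}) \leq f(\bar{x}_t) - \frac{\eta_t}{2}\|\nabla f(\bar{x}_t)\|^2 + \frac{\eta_t}{2}\|\bar{e}_t\|^2 - \frac{\eta_t(1-L\eta_t)}{2}\|\bar{d}_t\|^2.
\end{align*}
Because $\eta_t \leq 1/L$, the last term is non-positive and can be dropped. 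Taking expectations on both sides produces the claimed inequality.

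The argument is essentially routine and I do not expect any genuine obstacle. The only point that requires a pause is the synchronization observation, since the statement is phrased in terms of the mean iterate $\bar{x}_t$ but the algorithm updates per-worker variables $x_t^{(k)}$; confirming $x_t^{(k)} \equiv \bar{x}_t$ collapses the analysis to the single-machine descent-lemma calculation above, after which the proof is just bookkeeping with the polarization identity and the step-size condition.
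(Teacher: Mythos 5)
Your proposal is correct and follows essentially the same route as the paper: the $L$-smoothness quadratic upper bound applied to $\bar{x}_{t+1}=\bar{x}_t-\eta_t\bar{d}_t$, the polarization identity to split the cross term into $\|\bar{e}_t\|^2$, $\|\nabla f(\bar{x}_t)\|^2$, and $\|\bar{d}_t\|^2$ contributions, and the condition $\eta_t\leq 1/L$ to discard the residual $\|\bar{d}_t\|^2$ term. The only cosmetic difference is that the paper first adds and subtracts $\eta_t\|\bar{d}_t\|^2$ before invoking polarization, which is algebraically identical to your direct application.
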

\begin{proof}
Using the smoothness of $f$ we have:
\begin{align}
  \mathbb{E} [ f(\bar{x}_{t + 1})] & \leq \mathbb{E} \bigg[ f(\bar{x}_{t }) + \langle \nabla f(\bar{x}_{t}),  \bar{x}_{t + 1} - \bar{x}_{t}\rangle + \frac{L}{2} \| \bar{x}_{t + 1} - \bar{x}_{t } \|^2 \bigg] \nonumber\\
    &  \overset{(a)}{=} \mathbb{E}\bigg[f(\bar{x}_{t}) - \eta_t \langle \nabla f(\bar{x}_{t}),  \bar{d}_t \rangle + \frac{\eta_t^2 L}{2} \| \bar{d}_{t}  \|^2 \bigg] \nonumber\\
     & = \mathbb{E}\bigg[f(\bar{x}_{t}) - \eta_t  \| \bar{d}_{t}  \|^2 + \eta_t \langle \bar{d}_t - \nabla f(\bar{x}_{t}),  \bar{d}_t \rangle + \frac{\eta_t^2 L}{2} \| \bar{d}_{t}  \|^2\bigg] \nonumber\\
       & \overset{(b)}{=} \mathbb{E}\bigg[f(\bar{x}_{t}) - \eta_t  \| \bar{d}_{t}  \|^2 + \frac{\eta_t}{2} \| \bar{d}_t - \nabla f(\bar{x}_{t})\|^2 + \frac{\eta_t}{2} \|  \bar{d}_t \|^2 - \frac{\eta_t}{2} \| \nabla f(\bar{x}_t) \|^2 + \frac{\eta_t^2 L}{2} \| \bar{d}_{t}  \|^2 \bigg] \nonumber\\
       & =    \mathbb{E}\bigg[ f(\bar{x}_{t }) -  \left( \frac{\eta_t}{2} - \frac{\eta_t^2 L}{2} \right)  \| \bar{d}_{t}  \|^2 - \frac{\eta_t}{2} \|\nabla f(\bar{x}_t) \|^2 + \frac{\eta_t}{2}  \| \bar{d}_t - \nabla f(\bar{x}_{t}) \|^2 \bigg] \nonumber \\  
       & \overset{(c)}{\leq}  \mathbb{E}\bigg[ f(\bar{x}_{t })  - \frac{\eta_t}{2} \|\nabla f(\bar{x}_t) \|^2 + \frac{\eta_t}{2}  \| \bar{d}_t - \nabla f(\bar{x}_{t}) \|^2\bigg].    \label{eq: AD_Smoothness_1st}
\end{align}
where $(a)$ follows from the update given in Step 9 of Algorithm \ref{Algo_AD-STORM}, $(b)$ follows from the relation $\left\langle a,b \right\rangle = \frac{1}{2} \|a\|^2 + \frac{1}{2} \|b\|^2 - \frac{1}{2} \|a-b\|^2$ and $(c)$ follows from the choice $\eta_t \leq \frac{1}{L}$. 

Finally, using the notation $\bar{e}_t = \bar{d}_t - \nabla f(\bar{x}_{t})$ we have the proof.
\end{proof}
Now, to study the contraction of the average gradient error, $\bar{e}_t$, we analyze the contraction properties of $\| \bar{e}_t \|$ in the next lemma. Specifically, we analyze how the term ${\| \bar{e}_t \|^2}/{\eta_{t-1}}$ contracts across time.  Note that this construction helps us analyze the non-trivial potential function defined later in the section. We name the lemma as Error contraction lemma.   
\begin{lemma}[Error Contraction] 
\label{Lem: AD_ErrorContractionLemma}
The error term $\bar{e}_t$ from Algorithm \ref{Algo_AD-STORM} satisfies the following:
\begin{align*}
    \mathbb{E} \bigg[ \frac{\| \bar{e}_t \|^2}{\eta_{t-1}} \bigg] \leq  \mathbb{E} \bigg[ \bigg( (1 - a_t)^2  + \frac{4 (1 - a_t)^2 L^2 \eta_{t-1}^2}{K} \bigg) \frac{\|  \bar{e}_{t-1} \|^2}{\eta_{t-1}} + \frac{4 (1 - a_t)^2 L^2 \eta_{t-1}}{K}  \|  \nabla f(\bar{x}_{t-1}) \|^2 + \frac{2c^2 \eta_{t-1}^3 \bar{G}_t^2}{K} \bigg].
\end{align*}
\end{lemma}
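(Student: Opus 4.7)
The plan is to derive a one-step recursion for $\bar{e}_t$ in which the deterministic contraction factor $(1-a_t)$ is cleanly separated from a mean-zero stochastic residual, then exploit worker-wise independence of $\xi_t^{(k)}$ (from Assumption \ref{Ass: Unbiased_Var_Grad}) to obtain the critical $1/K$ factor in the variance. Starting from Step 11 of Algorithm \ref{Algo_AD-STORM} and using $x_t^{(k)}=\bar{x}_t$ and $d_t^{(k)}=\bar{d}_t$ for all $k$ (a consequence of Step 12), the averaged update reads
\[
\bar{d}_t \;=\; \tfrac{1}{K}\sum_k \nabla f^{(k)}(\bar{x}_t;\xi_t^{(k)}) \;+\; (1-a_t)\bar{d}_{t-1} \;-\; (1-a_t)\tfrac{1}{K}\sum_k \nabla f^{(k)}(\bar{x}_{t-1};\xi_t^{(k)}).
\]
I would subtract $\nabla f(\bar{x}_t)=\frac{1}{K}\sum_k \nabla f^{(k)}(\bar{x}_t)$, substitute $\bar{d}_{t-1}=\bar{e}_{t-1}+\nabla f(\bar{x}_{t-1})$, and rearrange to the compact form $\bar{e}_t = (1-a_t)\bar{e}_{t-1} + \tfrac{1}{K}\sum_k M_k$, where $M_k = a_t W_k + (1-a_t) U_k'$ with $W_k = \nabla f^{(k)}(\bar{x}_t;\xi_t^{(k)}) - \nabla f^{(k)}(\bar{x}_t)$ (an SGD-noise term) and $U_k' = [\nabla f^{(k)}(\bar{x}_t;\xi_t^{(k)}) - \nabla f^{(k)}(\bar{x}_{t-1};\xi_t^{(k)})] - [\nabla f^{(k)}(\bar{x}_t) - \nabla f^{(k)}(\bar{x}_{t-1})]$ (a SARAH-style noise term). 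Both $W_k$ and $U_k'$ are conditionally mean zero given the past.

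Next I would take the conditional expectation of $\|\bar{e}_t\|^2$ given $\mathcal{F}_{t-1}$. Since $a_t=c\eta_{t-1}^2$ and $\eta_{t-1}$ depends only on $\bar{G}_1,\ldots,\bar{G}_{t-1}$, both $a_t$ and $\bar{e}_{t-1}$ are $\mathcal{F}_{t-1}$-measurable, so the cross term $2(1-a_t)\langle\bar{e}_{t-1},\tfrac{1}{K}\sum_k M_k\rangle$ vanishes under $\mathbb{E}[\cdot|\mathcal{F}_{t-1}]$. Worker-wise independence of $\{\xi_t^{(k)}\}$ then gives $\mathbb{E}\bigl[\|\tfrac{1}{K}\sum_k M_k\|^2|\mathcal{F}_{t-1}\bigr] = \tfrac{1}{K^2}\sum_k \mathbb{E}[\|M_k\|^2|\mathcal{F}_{t-1}]$, which is where the $1/K$ appears. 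I would then split $\|M_k\|^2 \leq 2a_t^2\|W_k\|^2 + 2(1-a_t)^2\|U_k'\|^2$ by Young's inequality.

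To close the recursion I would bound the two per-worker variances separately. For $U_k'$, apply the standard identity $\mathbb{E}\|X-\mathbb{E}X\|^2\leq \mathbb{E}\|X\|^2$ with $X=\nabla f^{(k)}(\bar{x}_t;\xi_t^{(k)})-\nabla f^{(k)}(\bar{x}_{t-1};\xi_t^{(k)})$ and invoke the (almost-sure) Lipschitz smoothness from Assumption \ref{Ass: Lip_Smoothness} together with the iterate update $\bar{x}_t-\bar{x}_{t-1}=-\eta_{t-1}\bar{d}_{t-1}$ to obtain $\mathbb{E}[\|U_k'\|^2|\mathcal{F}_{t-1}] \leq L^2\eta_{t-1}^2 \|\bar{d}_{t-1}\|^2$; then apply $\|\bar{d}_{t-1}\|^2 \leq 2\|\bar{e}_{t-1}\|^2 + 2\|\nabla f(\bar{x}_{t-1})\|^2$. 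For $W_k$, the same variance inequality yields $\mathbb{E}[\|W_k\|^2|\mathcal{F}_{t-1}] \leq \mathbb{E}[\|\nabla f^{(k)}(\bar{x}_t;\xi_t^{(k)})\|^2|\mathcal{F}_{t-1}] = \mathbb{E}[(g_t^{(k)})^2|\mathcal{F}_{t-1}]$, and averaging over $k$ produces $\mathbb{E}[\bar{G}_t^2|\mathcal{F}_{t-1}]$.

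Finally I would divide through by $\eta_{t-1}$ (permissible inside the conditional expectation since $\eta_{t-1}$ is $\mathcal{F}_{t-1}$-measurable), substitute $a_t^2/\eta_{t-1} = c^2\eta_{t-1}^3$, and take the outer expectation; the tower property collapses $\mathbb{E}[\bar{G}_t^2|\mathcal{F}_{t-1}]$ back into $\bar{G}_t^2$ under the outer $\mathbb{E}$ because the accompanying factor $c^2\eta_{t-1}^3/K$ is $\mathcal{F}_{t-1}$-measurable. Matching coefficients yields exactly the claim. The main subtlety, rather than a deep obstacle, is bookkeeping the conditioning so that (i) the cross term cleanly vanishes, (ii) the worker-independence actually buys a $1/K$ factor in variance, and (iii) $\eta_{t-1}$, $a_t$, and eventually $\bar{G}_t^2$ are handled correctly when moved through the expectations.
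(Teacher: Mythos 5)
Your proposal is correct and follows essentially the same route as the paper's proof: the same decomposition $\bar{e}_t=(1-a_t)\bar{e}_{t-1}+\frac{1}{K}\sum_k M_k$ with $M_k=a_tW_k+(1-a_t)U_k'$, the same vanishing of cross terms via conditional unbiasedness and worker independence (the paper's Lemmas \ref{Lem: AD_InnerProduct_e_t_Grad} and \ref{Lem: AD_InnerProd_AcrossNodes}), the same variance inequality $\mathbb{E}\|X-\mathbb{E}X\|^2\leq\mathbb{E}\|X\|^2$ (Lemma \ref{Lem: AD_Mean_Variance_Bound}), and the same final bounds via Lipschitz smoothness, the update step, and $\|\bar{d}_{t-1}\|^2\leq 2\|\bar{e}_{t-1}\|^2+2\|\nabla f(\bar{x}_{t-1})\|^2$. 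Your explicit bookkeeping of the $\mathcal{F}_{t-1}$-measurability of $\eta_{t-1}$, $a_t$, and $\bar{e}_{t-1}$ is, if anything, slightly more careful than the paper's presentation.
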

\begin{proof}
Using the definition of $\bar{e}_t$ we have
\begin{align}
   & \mathbb{E} \bigg[ \frac{\| \bar{e}_t \|^2}{\eta_{t-1}} \bigg]  =  \mathbb{E} \bigg[ \frac{ \big\| \bar{d}_t - \nabla f(\bar{x}_t)  \big\|^2}{\eta_{t-1}} \bigg] \nonumber \\
    & \overset{(a)}{=} \mathbb{E} \bigg[ \frac{1}{\eta_{t-1}} \bigg\| \frac{1}{K} \sum_{k = 1}^K \nabla f^{(k)}(x^{(k)}_{t};\xi_t^{(k)}) + (1 - a_t)\bigg( \bar{d}_{t-1} - \frac{1}{K} \sum_{k = 1}^K \nabla f^{(k)}(x^{(k)}_{t - 1}; \xi_t^{(k)})\bigg) - \nabla f(\bar{x}_t)   \bigg\|^2 \bigg] \nonumber\\
    &  \overset{(b)}{=} \mathbb{E}  \bigg[ \frac{1}{\eta_{t-1}}  \bigg\| \frac{1}{K} \sum_{k = 1}^K \Big[\big( \nabla f^{(k)}(\bar{x}_t;\xi_t^{(k)})  -  \nabla f^{(k)}(\bar{x}_t) \big)    - (1 - a_t) \big( \nabla f^{(k)}(\bar{x}_{t - 1}; \xi_t^{(k)}) - \nabla f^{(k)}(\bar{x}_{t - 1})\big) \Big] + (1 - a_t) \bar{e}_{t-1}   \bigg\|^2 \bigg]\nonumber \\
    &   \overset{(c)}{=} \mathbb{E} \Bigg[  \frac{(1 - a_t)^2 \| \bar{e}_{t-1}\|^2}{\eta_{t-1}}  + \frac{1}{\eta_{t-1}K^2}\bigg\|\sum_{k = 1}^K \Big[\big( \nabla f^{(k)}(\bar{x}_t;\xi_t^{(k)})  -  \nabla f^{(k)}(\bar{x}_t) \big)    - (1 - a_t) \big( \nabla f^{(k)}(\bar{x}_{t - 1}; \xi_t^{(k)}) - \nabla f^{(k)}(\bar{x}_{t - 1})\big) \Big] \bigg\|^2  \nonumber\\
    & \qquad +\frac{2}{\eta_{t-1}}  \bigg\langle  (1 - a_t) \bar{e}_{t-1}, \frac{1}{K}\sum_{k = 1}^K \Big[\big( \nabla f^{(k)}(\bar{x}_t;\xi_t^{(k)})  -  \nabla f^{(k)}(\bar{x}_t) \big)    - (1 - a_t) \big( \nabla f^{(k)}(\bar{x}_{t - 1}; \xi_t^{(k)}) - \nabla f^{(k)}(\bar{x}_{t - 1})\big) \Big]   \bigg\rangle \Bigg] \nonumber \\
    &  \overset{(d)}{=} \mathbb{E} \Bigg[ \frac{(1 - a_t)^2 \| \bar{e}_{t-1}\|^2}{\eta_{t-1}}  + \frac{1}{\eta_{t-1}K^2 } \sum_{k = 1}^K  \big\| \big( \nabla f^{(k)}(\bar{x}_t;\xi_t^{(k)})  -  \nabla f^{(k)}(\bar{x}_t) \big)    - (1 - a_t) \big( \nabla f^{(k)}(\bar{x}_{t - 1}; \xi_t^{(k)}) - \nabla f^{(k)}(\bar{x}_{t - 1})\big)  \big\|^2 \nonumber\\
    & \qquad \qquad + \frac{1}{\eta_{t-1}K^2} \sum_{k,l \in [K],k \neq l} \bigg\langle   \big( \nabla f^{(k)}(\bar{x}_t;\xi_t^{(k)})  -  \nabla f^{(k)}(\bar{x}_t) \big)    - (1 - a_t) \big( \nabla f^{(k)}(\bar{x}_{t - 1}; \xi_t^{(k)}) - \nabla f^{(k)}(\bar{x}_{t - 1})\big), \nonumber\\
   & \qquad \qquad \qquad \qquad  \qquad  \quad      \qquad \quad \big( \nabla f^{(l)}(\bar{x}_t;\xi_t^{(l)})  -  \nabla f^{(l)}(\bar{x}_t) \big)    - (1 - a_t) \big( \nabla f^{(l)}(\bar{x}_{t - 1}; \xi_t^{(l)}) - \nabla f^{(l)}(\bar{x}_{t - 1})\big) \bigg\rangle  \Bigg]\nonumber\\
   & \overset{(e)}{=} \mathbb{E} \bigg[ \frac{(1 - a_t)^2 \| \bar{e}_{t-1}\|^2}{\eta_{t-1}}  + \frac{1}{\eta_{t-1} K^2} \sum_{k = 1}^K \big\| \big( \nabla f^{(k)}(\bar{x}_{t};\xi_t^{(k)})  -  \nabla f^{(k)}(\bar{x}_t) \big)    - (1 - a_t) \big( \nabla f^{(k)}(\bar{x}_{t - 1}; \xi_t^{(k)}) - \nabla f^{(k)}(\bar{x}_{t - 1})\big)  \big\|^2 \bigg].
   \label{Eq: AD_Error_Contraction1}
\end{align}
where $(a)$ follows from the definition of the descent direction $d_t^{(k)}$ given in Step 11 of Algorithm \ref{Algo_AD-STORM}, $(b)$ follows from adding and subtracting $(1- a_t) \frac{1}{K} \sum_{k=1}^K \nabla f^{(k)}(\bar{x}_{t-1}) (= (1 - a_t) \nabla f(\bar{x}_{t-1})$), the fact that $x_{t}^{(k)} = \bar{x}_t$ for all $t \in [T]$ (please see Step 9 of Algorithm \ref{Algo_AD-STORM}) and using the definition of $\bar{e}_{t-1}$, $(c)$ follows from expanding the norm using inner products, $(d)$ follows from the application of Lemma \ref{Lem: AD_InnerProduct_e_t_Grad} and again expanding the norm using the inner products across $k \in [K]$, finally $(e)$ results from the usage of Lemma \ref{Lem: AD_InnerProd_AcrossNodes}.

Now let us consider the 2nd term of \eqref{Eq: AD_Error_Contraction1} above we have:
\begin{align}
  &  \mathbb{E} \bigg[ \frac{1}{\eta_{t-1}K^2} \sum_{k = 1}^K  \big\| \big( \nabla f^{(k)}(\bar{x}_{t};\xi_t^{(k)})  -  \nabla f^{(k)}(\bar{x}_{t}) \big)    - (1 - a_t) \big( \nabla f^{(k)}(\bar{x}_{t - 1}; \xi_t^{(k)}) - \nabla f^{(k)}(\bar{x}_{t - 1})\big)  \big\|^2 \bigg] \nonumber\\
    & =  \mathbb{E} \bigg[ \frac{1}{\eta_{t-1}K^2} \sum_{k = 1}^K \big\| (1 -a_t) \Big[ \big( \nabla f^{(k)}(\bar{x}_{t};\xi_t^{(k)})  -  \nabla f^{(k)}(\bar{x}_{t}) \big)    - \big( \nabla f^{(k)}(\bar{x}_{t - 1}; \xi_t^{(k)}) - \nabla f^{(k)}(\bar{x}_{t - 1})\big) \Big] \nonumber \\
    & \qquad \qquad \qquad  \qquad \qquad \qquad \qquad  \qquad \qquad \qquad \qquad  \qquad  + a_t   \big( \nabla f^{(k)}(\bar{x}_{t}; \xi_t^{(k)}) - \nabla f^{(k)}(\bar{x}_{t})\big)      \big\|^2 \bigg] \nonumber\\
    & \overset{(a)}{\leq} \mathbb{E} \bigg[ \frac{2 (1 - a_t)^2}{\eta_{t-1}K^2} \sum_{k=1}^K  \big\| \big( \nabla f^{(k)}(\bar{x}_{t};\xi_t^{(k)})  -  \nabla f^{(k)}(\bar{x}_{t - 1}; \xi_t^{(k)}) \big)    -  \big(\nabla f^{(k)}(\bar{x}_{t}) - \nabla f^{(k)}(\bar{x}_{t - 1})\big) \big\|^2 \bigg] \nonumber\\
    & \qquad \qquad \qquad  \qquad \qquad \qquad \qquad \qquad \qquad \qquad   + \mathbb{E}\bigg[ \frac{2 a_t^2}{\eta_{t-1}K^2} \sum_{k=1}^K \big\|    \nabla f^{(k)}(\bar{x}_{t}; \xi_t^{(k)}) - \nabla f^{(k)}(\bar{x}_{t})    \big\|^2 \bigg]\nonumber\\
    & \overset{(b)}{\leq} \mathbb{E} \bigg[\frac{2 (1 - a_t)^2}{\eta_{t-1}K^2} \sum_{k=1}^K  \big\|   \nabla f^{(k)}(\bar{x}_t;\xi_t^{(k)})  -  \nabla f^{(k)}(\bar{x}_{t - 1}; \xi_t^{(k)}) \big\|^2 + \frac{2 c^2 \eta_{t-1}^3}{K^2}  \sum_{k=1}^K \big\|  \nabla f^{(k)}(\bar{x}_t; \xi_t^{(k)})\big\|^2 \bigg] \nonumber\\
    & \overset{(c)}{\leq} \mathbb{E} \bigg[ \frac{2 (1 - a_t)^2 L^2}{\eta_{t-1}K^2} \sum_{k=1}^K \| \bar{x}_t - \bar{x}_{t-1} \|^2 + \frac{2 c^2 \eta_{t-1}^3 \bar{G}_t^2}{K} \bigg] \nonumber \\
    & \overset{(d)}{=}  \mathbb{E}  \bigg[ \frac{2 (1 - a_t)^2 L^2 \eta_{t-1}}{K} \|  \bar{d}_{t-1} \|^2 + \frac{2 c^2 \eta_{t-1}^3 \bar{G}_t^2}{K} \bigg].
    \label{Eq: AD_Error_Contraction2}
\end{align}
where $(a)$ follows from using Lemma \ref{Lem: Norm_Ineq}, $(b)$ results from the use of Lemma \ref{Lem: AD_Mean_Variance_Bound} and the definition of $a_t$. Inequality $(c)$ follows from the Lipschitz continuity of the gradient given in Assumption \ref{Ass: Lip_Smoothness} and the definition of $\bar{G}_t^2$ given in Step 6 of Algorithm \ref{Algo_AD-STORM}. Finally, $(d)$ follows from the update Step 9 of Algorithm \ref{Algo_AD-STORM}.

Replacing, \eqref{Eq: AD_Error_Contraction2} in \eqref{Eq: AD_Error_Contraction1} we get:
\begin{align}
    \mathbb{E} \bigg[ \frac{\| \bar{e}_t \|^2}{\eta_{t-1}} \bigg] & \leq \mathbb{E} \bigg[ (1 - a_t)^2 \frac{\| \bar{e}_{t-1}\|^2}{\eta_{t-1}} + \frac{2 (1 - a_t)^2 L^2 \eta_{t-1}}{K}  \|  \bar{d}_{t-1} \|^2 + \frac{2 c^2 \eta_{t-1}^3 \bar{G}_t^2}{K} \bigg] \nonumber \\
     & \overset{(a)}{\leq} \mathbb{E} \bigg[ \bigg( (1 - a_t)^2  + \frac{4 (1 - a_t)^2 L^2 \eta_{t-1}^2}{K} \bigg) \frac{\|  \bar{e}_{t-1} \|^2}{\eta_{t-1}} + \frac{4 (1 - a_t)^2 L^2 \eta_{t-1}}{K} \|  \nabla f(\bar{x}_{t-1}) \|^2 + \frac{2 c^2 \eta_{t-1}^3 \bar{G}_t^2}{K} \bigg].
     \label{eq: AD_ErrorDescent}
\end{align}
where $(a)$ above follows by adding and subtracting $\nabla f(\bar{x}_{t-1})$ inside the norm $\| \bar{d}_{t-1}\|^2$ and using Lemma \ref{Lem: Norm_Ineq}. 

Therefore, we have the result.
\end{proof}
Before, proceeding further we first define the Lyapunov potential function to be used to get the main result of the paper. We define the Lyapunov potential function, $\Phi_t$, similar to \cite{Cutkosky_NIPS2019} as:
\begin{align}
\label{Eq: AD_PotentialFn}
\Phi_t = f(\bar{x}_t) + \frac{K}{48L^2 \eta_{t-1}} \| \bar{e}_t \|^2.
\end{align}

Using the above two lemmas, we can state the main result of the work in the next theorem. 
\begin{theorem}
\label{Thm: AD_Convergence_Main}
Under the assumptions given in Section \ref{sec: Problem} and for the choice of parameters:
\begin{enumerate}[(i)]
    \item For any $\displaystyle b^3 \geq \frac{2^{2/3}}{84}$.
    \item $\displaystyle \bar{\kappa} = \frac{b K^\alpha G^{2/3}}{L}$.
    \item $\displaystyle c = \frac{28L^2}{K} + \frac{2^{2/3} G^2}{3 L \bar{\kappa}^3} = L^2 \bigg( \frac{28}{K} + \frac{2^{2/3}}{3 b^3 K^{3\alpha}} \bigg) \overset{(i)}{\leq} \frac{56 L^2}{K}$.
    \item We have $\{w_t\}_{t=1}^T$ as
    \begin{align*}
w_t = \max \bigg\{2 G^2, \bar{\kappa}^3L^3 - \sum_{i = 1}^t \bar{G}_i^2,  \frac{\bar{\kappa}^3 c^3}{L^3} \bigg\} & = G^2 \max \bigg\{2, \frac{\bar{\kappa}^3L^3}{G^2} - \sum_{i = 1}^t \frac{\bar{G}_i^2}{G^2},  \frac{\bar{\kappa}^3 c^3}{G^2L^3} \bigg\} \\
&  \overset{(ii), (iii)}{\leq}  G^2 \max\bigg\{2 ,~ b^3 K^{3 \alpha} - \sum_{i = 1}^t \frac{\bar{G}_i^2}{G^2},  ~\frac{(56b)^3}{K^{3 - 3\alpha}} \bigg\}.
\end{align*}
\end{enumerate}
Then the algorithm AD-STORM satisfies:
\begin{align*}
    \mathbb{E} \Bigg[  \sqrt{\sum_{t=1}^T  \|  \nabla f(\bar{x}_{t}) \|^2 }  \Bigg]  \leq   \mathbb{E} \Bigg[\frac{M(\bar{\kappa}, w_0,c,\sigma) (w_T + G^2T)^{1/3} }{\bar{\kappa}} \bigg]^{1/2}.
\end{align*}
with $  M(\bar{\kappa}, w_0, c, \sigma)$ defined as
$$M(\bar{\kappa}, w_0, c, \sigma) = 3    \big(f(\bar{x}_1) - f(x^\ast)\big) +   \frac{w_0^{1/3}\sigma^2 }{20 L^2 \bar{\kappa}}  +  \frac{c^2 \bar{\kappa}^3}{10 L^2} \ln (T+2).$$
\end{theorem}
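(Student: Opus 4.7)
The plan is to work with the Lyapunov potential $\Phi_t = f(\bar{x}_t) + \frac{K}{48 L^2 \eta_{t-1}}\|\bar{e}_t\|^2$ of \eqref{Eq: AD_PotentialFn}. First I would derive a one-step contraction for $\Phi_t$ by combining Lemma~\ref{Lem: AD_DescentLemma} (which yields $f(\bar{x}_{t+1}) - f(\bar{x}_t) \leq -\frac{\eta_t}{2}\|\nabla f(\bar{x}_t)\|^2 + \frac{\eta_t}{2}\|\bar{e}_t\|^2$) with Lemma~\ref{Lem: AD_ErrorContractionLemma} applied at time $t+1$ (which controls $\|\bar{e}_{t+1}\|^2/\eta_t$ by a factor of roughly $(1 - 2c\eta_t^2 + 4L^2\eta_t^2/K)$ times $\|\bar{e}_t\|^2/\eta_t$, plus gradient and $\bar{G}_{t+1}^2$ residuals). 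The weight $K/(48L^2)$ is calibrated so that, after taking expectations, (a) the undesired $\frac{\eta_t}{2}\|\bar{e}_t\|^2$ slack from the descent lemma together with the positive discrepancy $\frac{K}{48L^2}\bigl(\eta_t^{-1}-\eta_{t-1}^{-1}\bigr)\|\bar{e}_t\|^2$ produced by the shrinking step-size is absorbed into the negative $-2c\eta_t^2$ factor from the error lemma (this is where the lower bound $c \geq 28L^2/K$ from condition (iii) is crucial), and (b) the residual coefficient of $\|\nabla f(\bar{x}_t)\|^2$ remains negative and of order $-\eta_t$.

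Once such a per-step inequality is in hand, summing $t=1,\ldots,T$, using $\Phi_{T+1} \geq f(x^\ast)$, and taking total expectation gives
\[
\sum_{t=1}^T \mathbb{E}\bigl[\eta_t \|\nabla f(\bar{x}_t)\|^2\bigr] \;\leq\; C_1\bigl(f(\bar{x}_1)-f(x^\ast)\bigr) + \frac{C_2\,\mathbb{E}\|\bar{e}_1\|^2}{\eta_0} + \frac{C_3 c^2}{L^2}\sum_{t=1}^T \mathbb{E}\bigl[\eta_t^3 \bar{G}_{t+1}^2\bigr]
\]
for explicit constants $C_1,C_2,C_3$ depending only on $K,L$. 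The initial error obeys $\mathbb{E}\|\bar{e}_1\|^2 \leq \sigma^2/K$ because $\bar{d}_1$ averages $K$ independent unbiased gradients (Assumption~\ref{Ass: Unbiased_Var_Grad}); combined with $\eta_0 = \bar{\kappa}/w_0^{1/3}$ this produces the $w_0^{1/3}\sigma^2/(20 L^2 \bar{\kappa})$ term inside $M(\bar{\kappa},w_0,c,\sigma)$.

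The main obstacle is bounding the adaptive residual $\sum_t \eta_t^3\bar{G}_{t+1}^2$. Substituting $\eta_t^3 = \bar{\kappa}^3/(w_t + \sum_{i\leq t}\bar{G}_i^2)$ and noting that, by condition (iv), the denominator is non-decreasing in $t$ (the only decreasing branch of the max is exactly cancelled by $\sum \bar{G}_i^2$) and is bounded below by $\max\{2G^2,\bar{\kappa}^3 c^3/L^3\}$, this reduces to an AdaGrad-style telescoping estimate
\[
\sum_{t=1}^T \frac{\bar{G}_{t+1}^2}{A_0 + \sum_{i=1}^t \bar{G}_i^2} \;\leq\; \ln\!\Bigl(\tfrac{A_0 + \sum_{i=1}^{T+1}\bar{G}_i^2}{A_0}\Bigr),
\]
which, using $\bar{G}_i^2 \leq G^2$ (from bounded gradients in Assumption~\ref{Ass: Unbiased_Var_Grad}), becomes $O(\ln(T+2))$. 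This is the only source of the logarithmic factor and is exactly what produces the $c^2\bar{\kappa}^3\ln(T+2)/(10L^2)$ term of $M$. The delicate part will be verifying the absorption in step~(a) above while the active branch of $w_t$ switches, since the three branches of the max yield different growth regimes; condition (i), $b^3 \geq 2^{2/3}/84$, is what makes the worst-case branch compatible with the $28L^2/K$ slack.

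Finally, I would convert $\sum_t \eta_t\|\nabla f(\bar{x}_t)\|^2$ into $\sum_t\|\nabla f(\bar{x}_t)\|^2$ via the deterministic lower bound $\eta_t \geq \bar{\kappa}/(w_T+G^2T)^{1/3}$, which follows from monotonicity of $w_t + \sum_i\bar{G}_i^2$ and $\bar{G}_i^2 \leq G^2$. This yields
\[
\mathbb{E}\Bigl[\sum_{t=1}^T \|\nabla f(\bar{x}_t)\|^2\Bigr] \;\leq\; \mathbb{E}\Bigl[\frac{M(\bar{\kappa},w_0,c,\sigma)\,(w_T+G^2T)^{1/3}}{\bar{\kappa}}\Bigr],
\]
after which Jensen's inequality applied to the concave square root, $\mathbb{E}[\sqrt{X}] \leq \sqrt{\mathbb{E}[X]}$, delivers the stated bound.
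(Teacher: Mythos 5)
Your outline reproduces the paper's own argument almost step for step: the same Lyapunov potential $\Phi_t = f(\bar{x}_t) + \frac{K}{48L^2\eta_{t-1}}\|\bar{e}_t\|^2$, the same absorption of the $\frac{\eta_t}{2}\|\bar{e}_t\|^2$ slack and of the step-size discrepancy $\eta_t^{-1}-\eta_{t-1}^{-1}$ into the $-c\eta_t^2$ contraction (using $w_t\le w_{t-1}$, $w_t\ge 2G^2$, $\bar{G}_t^2\le G^2$ and the concavity bound $(x+y)^{1/3}-x^{1/3}\le y/(3x^{2/3})$), the same AdaGrad-style log-sum for $\sum_t\eta_t^3\bar{G}_{t+1}^2$, and the same $\mathbb{E}\|\bar{e}_1\|^2\le\sigma^2/K$ bound. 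Up to the point where you obtain $\mathbb{E}\big[\sum_{t=1}^T\eta_t\|\nabla f(\bar{x}_t)\|^2\big]\le M(\bar{\kappa},w_0,c,\sigma)$, your plan is sound and matches the paper.

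The genuine gap is in your final conversion. The quantity $\bar{\kappa}/(w_T+G^2T)^{1/3}$ is \emph{not} deterministic: $w_T$ contains $-\sum_{i=1}^T\bar{G}_i^2$, which depends on the realized stochastic gradients, so $\eta_T$ and your lower bound on $\eta_t$ are both random variables. Consequently, from $\mathbb{E}[\eta_T Y]\le M$ with $Y=\sum_t\|\nabla f(\bar{x}_t)\|^2$ you cannot conclude $\mathbb{E}[Y]\le\mathbb{E}[M/\eta_T]$; that implication fails for a random multiplier that is small precisely when $Y$ is large (e.g.\ $\eta_T\in\{1,\epsilon\}$ with $Y$ supported where $\eta_T=\epsilon$ gives $\mathbb{E}[Y]=M/\epsilon>\tfrac{M}{2}(1+1/\epsilon)=\mathbb{E}[M/\eta_T]$ for $\epsilon<1$). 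The paper avoids this by the Cauchy--Schwarz splitting of \cite{Cutkosky_NIPS2019}: write
\begin{align*}
\mathbb{E}\Big[\sqrt{Y}\Big]=\mathbb{E}\Big[\sqrt{\eta_T Y}\cdot\eta_T^{-1/2}\Big]\le\big(\mathbb{E}[\eta_T Y]\big)^{1/2}\big(\mathbb{E}[\eta_T^{-1}]\big)^{1/2}\le\big(\mathbb{E}[M/\eta_T]\big)^{1/2},
\end{align*}
and then bound $\eta_T^{-1}=(w_T+\sum_{t=1}^T\bar{G}_t^2)^{1/3}/\bar{\kappa}\le(w_T+G^2T)^{1/3}/\bar{\kappa}$ inside the expectation. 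Your Jensen step $\mathbb{E}[\sqrt{Y}]\le\sqrt{\mathbb{E}[Y]}$ is fine but rests on the invalid intermediate claim. If you insist on a genuinely deterministic lower bound you must use $\eta_t\ge\bar{\kappa}/(w_0+G^2T)^{1/3}$, which proves only a weaker statement with $w_0$ in place of $w_T$; to get the theorem as stated you need the Cauchy--Schwarz route.
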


\begin{proof}
First, using Lemma \ref{Lem: AD_DescentLemma} and adding over $t = 1$ to $T$, we get:
\begin{align}
 \mathbb{E} [f(\bar{x}_{T + 1}) -   f(\bar{x}_{1})]  \leq     \mathbb{E} \bigg[ - \sum_{t=1}^T \frac{\eta_t}{2}  \|\nabla f(\bar{x}_t) \|^2 + \sum_{t=1}^T \frac{\eta_t}{2}  \| \bar{e}_t\|^2 \bigg].
    \label{eq: AD_Smoothness_Sum}
\end{align}
Now using Lemma \ref{Lem: AD_ErrorContractionLemma} we compute:
\begin{align}
 \mathbb{E} \bigg[   \frac{\|\bar{e}_{t+1} \|^2}{\eta_t}-  \frac{\|\bar{e}_{t} \|^2}{\eta_{t-1}} \bigg] & \leq \mathbb{E} \bigg[ \bigg( (1 - a_{t+1})^2  + \frac{4 (1 - a_{t+1})^2 L^2 \eta_{t}^2}{K} \bigg) \frac{\|  \bar{e}_{t} \|^2}{\eta_t} \nonumber\\
    & \qquad \qquad \qquad \quad + \frac{4 (1 - a_{t+1})^2 L^2 \eta_{t}}{K}  \|  \nabla f(\bar{x}_t) \|^2  + \frac{2c^2 \eta_t^3 \bar{G}_{t+1}^2}{K} -   \frac{\|\bar{e}_{t} \|^2}{\eta_{t-1}} \bigg] \nonumber\\
    & = \mathbb{E} \bigg[ \bigg( \eta_t^{-1} (1 - a_{t+1})^2 \bigg(1 + \frac{4 L^2 \eta_{t}^2}{K} \bigg) -  \eta_{t-1}^{-1} \bigg)\|\bar{e}_{t}\|^2  \nonumber\\
    & \qquad \qquad \qquad \qquad \qquad + \frac{4 (1 - a_{t+1})^2 L^2 \eta_{t}}{K}  \|  \nabla f(\bar{x}_t) \|^2  + \frac{2c^2 \eta_t^3 \bar{G}_{t+1}^2}{K} \bigg] \nonumber\\
    & \overset{(a)}{\leq}  \mathbb{E} \bigg[ \bigg( \eta_t^{-1} (1 - a_{t+1}) \bigg(1 + \frac{4 L^2 \eta_{t}^2}{K} \bigg) -  \eta_{t-1}^{-1} \bigg) \|  \bar{e}_{t} \|^2 + \frac{4 L^2 \eta_{t}}{K}  \|  \nabla f(\bar{x}_t) \|^2  + \frac{2c^2 \eta_t^3 \bar{G}_{t+1}^2}{K} \bigg].
    \label{eq: AD_Error_Potential}
\end{align}
where $(a)$ follows from the fact that $0 < 1 - a_t < 1$

Let us consider the coefficient of the first term of \eqref{eq: AD_Error_Potential}:
\begin{align}
     \eta_t^{-1} (1 - a_{t+1}) \bigg(1 + \frac{4 L^2 \eta_{t}^2}{K} \bigg) -  \eta_{t-1}^{-1} & = \eta_t^{-1} - \eta_{t-1}^{-1} + \eta_t^{-1} \bigg( \frac{4L^2 \eta_t^2}{K} - a_{t+1} \bigg( 1 + \frac{4L^2 \eta_t^2}{K} \bigg) \bigg) \nonumber\\
     & \overset{(a)}{\leq} \eta_t^{-1} - \eta_{t-1}^{-1} + \eta_t^{-1} \bigg( \frac{4L^2 \eta_t^2}{K} - a_{t+1}   \bigg) \nonumber\\
     & = \eta_t^{-1} - \eta_{t-1}^{-1} + \eta_t  \bigg( \frac{4L^2  }{K} - c  \bigg).
     \label{Eq: AD_Coefficient_e_t}
\end{align}
where inequality $(a)$ utilizes the fact that ${4 L^2 \eta_t^2}/{K}  > 0$.

First, considering $\eta_t^{-1} - \eta_{t-1}^{-1}$ in \eqref{Eq: AD_Coefficient_e_t} we have from the definition of $\eta_t$ in Algorithm \ref{Algo_AD-STORM}:
\begin{align}
    \eta_t^{-1} - \eta_{t-1}^{-1} & =  \frac{ \big(w_t + \sum_{i=1}^t \bar{G}_i^2 \big)^{1/3} - \big(w_{t-1} + \sum_{i=1}^{t-1} \bar{G}_i^2 \big)^{1/3} }{\bar{\kappa}} \nonumber \\
    & \overset{(a)}{\leq}     \frac{ \big(w_t + \sum_{i=1}^t \bar{G}_i^2 \big)^{1/3} - \big(w_{t} + \sum_{i=1}^{t-1} \bar{G}_i^2\big)^{1/3} }{\bar{\kappa}} \nonumber\\
    & \overset{(b)}{\leq} \frac{\bar{G}_t^2}{3 \bar{\kappa} \big(w_t + \sum_{i=1}^{t-1} \bar{G}_i^2 \big)^{2/3}} \nonumber\\
    & \overset{(c)}{\leq} \frac{\bar{G}_t^2}{3 \bar{\kappa} \big(w_t - G^2 + \sum_{i=1}^t \bar{G}_i^2 \big)^{2/3}} \nonumber\\
     & \overset{(d)}{\leq} \frac{\bar{G}_t^2}{3 \bar{\kappa} \big(w_t/2  + \sum_{i=1}^t \bar{G}_i^2 \big)^{2/3}} \nonumber\\
        & = \frac{2^{2/3} \bar{G}_t^2}{3 \bar{\kappa} \big(w_t  + 2 \sum_{i=1}^t \bar{G}_i^2  \big)^{2/3}} \nonumber \\
   & \leq \frac{2^{2/3}\bar{G}_t^2}{3 \bar{\kappa} \big(w_t + \sum_{i=1}^t \bar{G}_i^2 \big)^{2/3}} \overset{(e)}{\leq} \frac{2^{2/3}G^2 \bar{\kappa}^2}{3 \bar{\kappa}^3 \big(w_t + \sum_{i=1}^t \bar{G}_i^2  \big)^{2/3}}  \overset{(f)}{=} \frac{2^{2/3}G^2 }{3 \bar{\kappa}^3 } \eta_t^2 \overset{(g)}{\leq} \frac{2^{2/3}G^2}{3 L \bar{\kappa}^3 } \eta_t.
   \label{Eq: AD_Coefficient_e_t_1}
\end{align}
where inequality $(a)$ uses the fact that we have $w_t \leq w_{t-1}$ which follows from the definition of $w_t$ given in the statement $(iv)$ of Theorem \ref{Thm: AD_Convergence_Main} and $(b)$ follows from:
$$(x + y)^{1/3} - x^{1/3} \leq \frac{y}{3x^{2/3}}.$$
In $(c)$ we have used $\bar{G}_t^2 \leq G^2$ for all $t \in \mathbb{N}$ (please see \eqref{Eq: DA_Bar_G_t}), in inequality $(d)$ we have used the fact that $w_t \geq 2 G^2$, which follows from the definition of $w_t$ given in statement $(iv)$ of Theorem \ref{Thm: AD_Convergence_Main}. Inequality $(e)$ again uses $\bar{G}_t^2 \leq G^2$ for all $t \in \mathbb{N}$. Finally, in $(f)$ and $(g)$ we used the definition of $n_t$ given in Algorithm \ref{Algo_AD-STORM} and the fact that $\eta_t \leq 1/L$, respectively.

Now consider the term $\displaystyle \eta_t \bigg( \frac{4L^2  }{K} - c \bigg)$ in \eqref{Eq: AD_Coefficient_e_t}, since we have $\displaystyle c = \frac{28L^2}{K} + \frac{2^{2/3}G^2}{3 L\bar{\kappa}^3}$ we get:
\begin{align}
    \eta_t \bigg( \frac{4L^2  }{K} - c \bigg) = \eta_t \bigg( - \frac{24L^2}{K} - \frac{2^{2/3}G^2}{3 L\bar{\kappa}^3} \bigg)
    \label{Eq: AD_Coefficient_e_t_2}
\end{align}
Replacing \eqref{Eq: AD_Coefficient_e_t_1} and \eqref{Eq: AD_Coefficient_e_t_2} in \eqref{Eq: AD_Coefficient_e_t}, we get:
\begin{align}
    \eta_t^{-1} (1 - a_{t+1}) \bigg(1 + \frac{4 L^2 \eta_{t}^2}{K} \bigg) -  \eta_{t-1}^{-1}  \leq - \frac{24 L^2\eta_t}{K} .
    \label{Eq: AD_Coefficient_e_t_Bound}
\end{align}
Replacing \eqref{Eq: AD_Coefficient_e_t_Bound} in \eqref{eq: AD_Error_Potential}, we get:
\begin{align*}
   \mathbb{E}  \bigg[ \frac{\|\bar{e}_{t+1} \|^2}{\eta_t}-  \frac{\|\bar{e}_{t} \|^2}{\eta_{t-1}} \bigg] & \leq \mathbb{E} \bigg[ - \frac{24 L^2 \eta_t}{K}  \|  \bar{e}_{t} \|^2  + \frac{4  L^2 \eta_{t} }{K}  \|  \nabla f(\bar{x}_t) \|^2  + \frac{2c^2 \bar{G}_{t+1}^2 \eta_t^3}{K} \bigg].
\end{align*}
Now summing over $t$ and multiplying by $\displaystyle \frac{K}{48L^2}$ we get:
\begin{align}
   \frac{K}{48L^2} \sum_{t=1}^T \mathbb{E} \bigg[ \frac{\|\bar{e}_{t+1} \|^2}{\eta_t}-  \frac{\mathbb{E}\|\bar{e}_{t} \|^2}{\eta_{t-1}} \bigg] & \leq  \mathbb{E} \bigg[ \sum_{t=1}^T - \frac{\eta_t}{2} \|  \bar{e}_{t} \|^2  +  \sum_{t=1}^T \frac{\eta_{t} }{12}  \|  \nabla f(\bar{x}_t) \|^2  + \sum_{t=1}^T \frac{c^2  \bar{G}_{t+1}^2  \eta_t^3}{24 L^2}\bigg].
   \label{eq: AD_Half_Potential_Sum}
\end{align}
Finally, considering the last term of \eqref{eq: AD_Half_Potential_Sum} above and using the definition of $\eta_t$ from Algorithm \ref{Algo_AD-STORM} we have:
\begin{align}
  \sum_{t=1}^T \frac{c^2  \bar{G}_{t+1}^2  \eta_t^3 }{24 L^2} & = \sum_{t=1}^T \frac{c^2 \bar{\kappa}^3   \bar{G}_{t+1}^2 }{24 L^2 (w_t + \sum_{i=1}^t \bar{G}_i^2)} \nonumber\\
  & \overset{(a)}{\leq} \sum_{t=1}^T \frac{c^2 \bar{\kappa}^3    \bar{G}_{t+1}^2 }{24 L^2 (2 G^2 + \sum_{i=1}^t \bar{G}_i^2)} \nonumber\\
  & \overset{(b)}{\leq} \sum_{t=1}^T \frac{c^2 \bar{\kappa}^3    \bar{G}_{t+1}^2 }{24 L^2 ( G^2 + \sum_{i=1}^{t+1} \bar{G}_i^2)} \nonumber\\
 & \overset{(c)}{\leq} \frac{c^2 \bar{\kappa}^3 }{24L^2}  \ln\bigg( 1 + \sum_{t=1}^{T+1} \frac{\bar{G}_t^2}{G^2} \bigg) \nonumber\\ 
 & \overset{(d)}{\leq} \frac{c^2 \bar{\kappa}^3 }{24L^2} \ln (T+2).
  \label{Eq: AD_Coefficient_Variance_Sum}
\end{align}
where inequality $(a)$ uses the fact that $w_t \geq 2 G^2$ (see Theorem \ref{Thm: AD_Convergence_Main}), $(b)$ follows from the fact that $\bar{G}_t^2 \leq G^2$ (please see \eqref{Eq: DA_Bar_G_t}) for all $t \in \mathbb{N}$ and $(c)$ follows from Lemma \ref{Lem: AD_Sum_1overT}. Finally, $(d)$ again follows from the fact that $\bar{G}_t^2 \leq G^2$ for all $t \in \mathbb{N}$.

Replacing \eqref{Eq: AD_Coefficient_Variance_Sum} in \eqref{eq: AD_Half_Potential_Sum}, we get:
\begin{align}
   \frac{K}{48L^2} \sum_{t=1}^T \mathbb{E} \bigg[ \frac{\|\bar{e}_{t+1} \|^2}{\eta_t}-  \frac{\|\bar{e}_{t} \|^2}{\eta_{t-1}} \bigg] & \leq  \sum_{t=1}^T \mathbb{E} \bigg[ -\frac{\eta_t}{2}  \|  \bar{e}_{t} \|^2  +    \frac{\eta_{t} }{12} \mathbb{E}\|  \nabla f(\bar{x}_t) \|^2 \bigg] + \frac{c^2 \bar{\kappa}^3}{24L^2} \ln (T+2)  .
   \label{eq: AD_Half_Potential_Final}
\end{align}
Adding \eqref{eq: AD_Smoothness_Sum} and \eqref{eq: AD_Half_Potential_Final} above and using the definition of the potential function $\displaystyle \Phi_t = f(\bar{x}_t) + \frac{K}{48L^2 \eta_{t-1}} \|\bar{e}_t\|^2$ given in \eqref{Eq: AD_PotentialFn} we get:
\begin{align*}
    \mathbb{E}[\Phi_{T+1} - \Phi_1] & \leq  \sum_{t=1}^T \mathbb{E} \bigg[ -\frac{\eta_{t} }{2} \|  \nabla f(\bar{x}_t) \|^2 +   \frac{\eta_{t} }{12}  \|  \nabla f(\bar{x}_t) \|^2 \bigg] + \frac{c^2 \bar{\kappa}^3 }{24L^2} \ln (T+2) \\
    & =  \sum_{t=1}^T \mathbb{E} \bigg[ -  \frac{5 \eta_{t} }{12} \|  \nabla f(\bar{x}_t) \|^2 \bigg]  + \frac{c^2 \bar{\kappa}^3}{24L^2} \ln (T+2).
\end{align*}
Rearranging the terms we get;
\begin{align*}
 \mathbb{E} \bigg[  \sum_{t=1}^T  \eta_{t}\|  \nabla f(\bar{x}_t) \|^2 \bigg]    & \leq \frac{12}{5} \mathbb{E}[\Phi_{1} - \Phi_{T+1}] + \frac{c^2 \bar{\kappa}^3}{10 L^2} \ln (T+2) \\
  & \overset{(a)}{\leq} 3 \mathbb{E}[f(\bar{x}_1) - f(x^\ast)] + \frac{K}{20 L^2 \eta_0} \mathbb{E}\| \bar{e}_1\|^2+ \frac{c^2 \bar{\kappa}^3 }{10 L^2} \ln (T+2) \\
  & \overset{(b)}{\leq} 3 \mathbb{E}[f(\bar{x}_1) - f(x^\ast)] + \frac{w_0^{1/3}\sigma^2 }{20 L^2 \bar{\kappa}}  + \frac{c^2 \bar{\kappa}^3 }{10 L^2} \ln (T+2).
\end{align*}
where $(a)$ follows from the definition of $\Phi_1$ and the fact that $\Phi_{T+1} \geq f(x^\ast)$ and inequality $(b)$ uses the definition of $\eta_0$ and Lemma \ref{Lem: AD_e_bar_bound} to bound $\| \bar{e}_1 \|^2$.

Finally, note from the choice of $\displaystyle w_t = \max \bigg\{2 G^2, \bar{\kappa}^3L^3 - \sum_{i=1}^t \bar{G}_i^2,  \frac{\bar{\kappa}^3 c^3}{L^3} \bigg\}$ in the statement $(iv)$ of Theorem \ref{Thm: Convergence_Main} and the definition of $\displaystyle \eta_t = \frac{\bar{\kappa}}{(w_t + \sum_{i=1}^t \bar{G}_i^2 )^{1/3}}$ that $\eta_t$ is non-increasing with $t$. Therefore, using the fact that $\eta_T \leq \eta_t$ for all $t \in [T]$ in the above, we get
\begin{align*}
 \mathbb{E} \bigg[ \eta_T    \sum_{t=1}^T  \|  \nabla f(\bar{x}_{t}) \|^2  \bigg]   & \leq  3    \big(f(\bar{x}_1) - f(x^\ast)\big) +   \frac{w_0^{1/3}\sigma^2 }{20 L^2 \bar{\kappa}}  +  \frac{c^2 \bar{\kappa}^3 }{10 L^2} \ln (T+2). 
\end{align*}
Now let us denote:
\begin{align*}
  M(\bar{\kappa}, w_0, c, \sigma) = 3    \big(f(\bar{x}_1) - f(x^\ast)\big) +   \frac{w_0^{1/3}\sigma^2 }{20 L^2 \bar{\kappa}}  +  \frac{c^2 \bar{\kappa}^3}{10 L^2} \ln (T+2). 
\end{align*}
Using the analysis similar to one conducted in \cite{Cutkosky_NIPS2019}, we get:
\begin{align}
 \mathbb{E} \Bigg[  \sqrt{  \sum_{t=1}^T  \|  \nabla f(\bar{x}_{t}) \|^2 }  \Bigg]^2   & \leq
 \mathbb{E} \bigg[\frac{M(\bar{\kappa}, w_0,c,\sigma)}{\eta_T} \bigg] \nonumber\\
 & \overset{(a)}{=}  \mathbb{E} \bigg[\frac{M(\bar{\kappa}, w_0,c,\sigma) (w_T + \sum_{t=1}^T \bar{G}_t^2)^{1/3} }{\bar{\kappa}} \bigg]  \nonumber\\
  & \overset{(b)}{\leq}  \mathbb{E} \bigg[\frac{M(\bar{\kappa}, w_0,c,\sigma) (w_T +  G^2 T)^{1/3} }{\bar{\kappa}} \bigg] 
%   & \overset{(b)}{\leq}  \mathbb{E} \Bigg[\frac{M(\bar{\kappa}, w_0,c,\sigma) (w_T + (\sigma^2T/K))^{1/3} }{\bar{\kappa}} \bigg] +  \mathbb{E} \Bigg[\frac{M(\bar{\kappa}, w_0,c,\sigma)  \bigg(  \sqrt{  \sum_{t=1}^T  \|  \nabla f(\bar{x}_{t}) \|^2 }  \bigg)^{2/3}  }{\bar{\kappa}} \Bigg]  \nonumber\\
    % & \leq  \mathbb{E} \Bigg[\frac{M(\bar{\kappa}, w_0,c,\sigma) (w_T + (\sigma^2T/K))^{1/3} }{\bar{\kappa}} \bigg] +   \frac{M(\bar{\kappa}, w_0,c,\sigma)   }{\bar{\kappa}} \mathbb{E} \Bigg[  \sqrt{  \sum_{t=1}^T  \|  \nabla f(\bar{x}_{t}) \|^2 }  \Bigg]^{2/3}  
   \label{Eq: DA_Descent_Gradient1}
 \end{align}
 where $(a)$ follows from the definition of $\eta_T$ and $(b)$ uses the fact that $\bar{G}_t \leq G^2$ which follows from:
  \begin{align}
   \bar{G}_t^2  & = \frac{1}{K} \sum_{k=1}^K \big( g_t^{(k)} \big)^2  = \frac{1}{K} \sum_{k=1}^K \| \nabla f^{(k)}(x^{(k)}; \xi_t^{(k)})\|^2 \leq G^2.
  \label{Eq: DA_Bar_G_t}
\end{align}

Hence, we have the proof. 
\end{proof}

Using Theorem \ref{Thm: AD_Convergence_Main} and utilizing the definition of $w_T$, we can now compute the computation complexity (Definition \ref{Def: ComputationComplexity}) of the algorithm. 
\begin{cor}
\label{Cor: DA_Computation_Complexity}
 For $\displaystyle \alpha = \frac{2}{3}$ and rest of the parameters chosen according to Theorem \ref{Thm: AD_Convergence_Main}.
 \begin{enumerate}[(i)]
    \item  For $K^{1 - \alpha} \geq 56b/2^{1/3}$, we have:
\begin{align*}
      \mathbb{E}\|  \nabla f({x}_{a}) \| & \leq  O \bigg(\frac{1 + \sigma +  \sqrt{\ln(T+2)}}{K^{1/3} T^{1/2}}\bigg) + O \bigg(\frac{1 + \sigma  +  \sqrt{\ln(T+2)}}{K^{1/3} T^{1/3}}\bigg).
\end{align*}
and for $K^{1 - \alpha} \leq 56b/2^{1/3}$ we have
\begin{align*}
       \mathbb{E}\|  \nabla f({x}_{a}) \| & \leq  O \bigg(\frac{1 + \sigma +  \sqrt{\ln(T+2)}}{K^{1/2} T^{1/2}}\bigg) + O \bigg(\frac{1 + \sigma  +  \sqrt{\ln(T+2)}}{K^{1/3} T^{1/3}}\bigg).
\end{align*}
\item To reach an $\epsilon$-stationary solution we need $\tilde{O}(K^{-1} \epsilon^{-3})$, gradient computations at each node, thereby, achieving linear speedup with the number of WNs $K$ in the network.
\end{enumerate}
\end{cor}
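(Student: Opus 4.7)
The plan is to take the master inequality of Theorem \ref{Thm: AD_Convergence_Main} and convert it into the advertised per-iterate bound by introducing a randomly sampled iterate $x_a$, then to specialize all of the parameters to $\alpha = 2/3$ and track how each appearance of $K$ propagates through $\bar\kappa$, $w_T$, $c$, and $M(\bar\kappa,w_0,c,\sigma)$. Concretely, I would first pick $x_a$ uniformly at random from $\{\bar{x}_1,\dots,\bar{x}_T\}$, so that by Jensen/Cauchy--Schwarz
\begin{align*}
\mathbb{E}\|\nabla f(x_a)\|
= \frac{1}{T}\sum_{t=1}^T \mathbb{E}\|\nabla f(\bar{x}_t)\|
\leq \frac{1}{\sqrt{T}}\,\mathbb{E}\!\left[\sqrt{\sum_{t=1}^T\|\nabla f(\bar{x}_t)\|^2}\,\right],
\end{align*}
which combined with the theorem reduces the job to bounding $T^{-1}\,\mathbb{E}\!\left[M(\bar\kappa,w_0,c,\sigma)(w_T+G^2T)^{1/3}/\bar\kappa\right]$ and then taking a square root.

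Next I would substitute $\alpha = 2/3$, giving $\bar\kappa = bK^{2/3}G^{2/3}/L$, $c \le 56L^2/K$, and $\bar\kappa^3 = b^3 K^2 G^2/L^{3}$, and then unpack $M$ term-by-term. The optimality-gap term $3(f(\bar{x}_1)-f(x^\ast))$ is $O(1)$; the variance term $w_0^{1/3}\sigma^2/(20L^2\bar\kappa)$ is $O(\sigma^2/K^{2/3})$ once $w_0$ is chosen as a constant multiple of $G^2$; and the third term $c^2\bar\kappa^3\ln(T+2)/(10L^2)$ telescopes into $O(\ln T)$ with all $K$'s canceling (since $c^2\bar\kappa^3 = (56^2 b^3 L)G^2$). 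Hence $M = O(1 + \sigma^2/K^{2/3} + \ln T)$.

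The bound on $w_T$ splits into the two cases according to which entry of the max in statement $(iv)$ of Theorem \ref{Thm: AD_Convergence_Main} is active. In case (i), $K^{1/3}\ge 56b/2^{1/3}$ forces $(56b)^3/K \le 2$, so $w_T \le G^2(2+b^3K^2)$ and
\begin{align*}
\frac{(w_T+G^2T)^{1/3}}{\bar\kappa}
\le \frac{G^{2/3}\bigl(2^{1/3}+bK^{2/3}+T^{1/3}\bigr)}{bK^{2/3}G^{2/3}/L}
= O(1) + O\!\left(\tfrac{T^{1/3}}{K^{2/3}}\right).
\end{align*}
Multiplying by $M/T$, splitting the square root additively, and absorbing the $1/K^{2/3}$ inside the $\sigma^2$ piece of $M$ (so that $\sqrt{\sigma^2/K^{2/3}\cdot 1/T} = \sigma/(K^{1/3}T^{1/2})$) produces the two declared $O((1+\sigma+\sqrt{\ln(T+2)})/(K^{1/3}T^{\cdot}))$ terms; the $K^{1/3}$ factor in the first term is obtained by attaching the residual $K^{1/3}$ that appears through the $\sigma$-component of $M$. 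In case (ii), $K^{1/3}\le 56b/2^{1/3}$ means $K$ is upper-bounded by a constant depending only on $b$, so any $K^{-1/3}$ or $K^{-1/2}$ factor is $O(1)$; then the same algebra using now $w_T \le G^2(56b)^3/K$ yields the stated form, with the $K^{1/2}$ in the first term arising from writing the $O(1)$ leading constant in the legitimate form $1 = K^{1/2}/K^{1/2}$ and folding it back into the bound. This last step, making the $K^{1/3}$ (resp.\ $K^{1/2}$) appear \emph{inside} the big-$O$ rather than outside, is where the bookkeeping gets finickiest and is the step I would watch most carefully.

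For part $(ii)$, both summands in the bound on $\mathbb{E}\|\nabla f(x_a)\|$ are dominated by the $K^{-1/3}T^{-1/3}$ term once $T$ is moderately large, so setting $\tilde{O}(K^{-1/3}T^{-1/3})\le \epsilon$ gives $T \ge \tilde{O}(K^{-1}\epsilon^{-3})$. Since each worker performs exactly two IFO calls per iteration (the stochastic gradients at $\bar{x}_{t+1}$ and $\bar{x}_t$ used in Step~11 of Algorithm \ref{Algo_AD-STORM}), the total number of IFO calls at every WN is $2T = \tilde{O}(K^{-1}\epsilon^{-3})$, which is exactly a factor-$K$ improvement over the centralized $\tilde{O}(\epsilon^{-3})$ rate of STORM in \cite{Cutkosky_NIPS2019}, establishing linear speedup.
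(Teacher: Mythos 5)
Your overall route is the same as the paper's: Jensen/Cauchy--Schwarz to pass from $\frac{1}{T}\sum_t\mathbb{E}\|\nabla f(\bar{x}_t)\|$ to $T^{-1/2}\,\mathbb{E}\bigl[\sqrt{\sum_t\|\nabla f(\bar{x}_t)\|^2}\bigr]$, then substitute $\alpha=2/3$ into $M$, $\bar\kappa$, $c$, $w_T$, split into the two regimes according to which entry of the max defining $w_t$ is active, and read off the complexity. Part (ii) and the two-IFO-calls-per-iteration accounting are fine.

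The genuine gap is your treatment of $w_0$. You write that the variance term $w_0^{1/3}\sigma^2/(20L^2\bar\kappa)$ is $O(\sigma^2/K^{2/3})$ ``once $w_0$ is chosen as a constant multiple of $G^2$,'' but $w_0$ is not a free design choice: from statement (iv) of Theorem \ref{Thm: AD_Convergence_Main}, $w_0=\max\{2G^2,\ \bar\kappa^3L^3,\ \bar\kappa^3c^3/L^3\}$, and with $\bar\kappa=bK^{\alpha}G^{2/3}/L$ this equals $b^3K^{3\alpha}G^2=\Theta(K^2G^2)$ for $\alpha=2/3$ (except for very small $K$); the paper explicitly works with this worst case $w_0=O(b^3K^{3\alpha})$. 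With the correct $w_0$ one gets $w_0^{1/3}/\bar\kappa=L$, so the variance term in $M$ is $\sigma^2/(20L)=O(\sigma^2)$ with \emph{no} $K$-dependence, and hence $M=O(1+\sigma^2+\ln(T+2))$, not $O(1+\sigma^2/K^{2/3}+\ln(T+2))$. The $K^{1/3}$ attached to the $\sigma$-term in the final bound then comes entirely from the factor $\bar\kappa^{-1/2}\sim K^{-1/3}$ in $\sqrt{M(w_T+G^2T)^{1/3}/\bar\kappa}$ (and the $K^{1/2}$ in Regime 2 from $w_T^{1/3}/\bar\kappa=O(1/K)$), not from any ``residual $K^{1/3}$ in the $\sigma$-component of $M$'' or from inserting $1=K^{1/2}/K^{1/2}$. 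Because your assumed $w_0$ is smaller than the true one, the $M$ you bound is not an upper bound on the actual $M$, so the derivation as written does not establish the stated inequalities for the algorithm as specified (the final claims happen to survive only because the correct computation with the larger $w_0$ still yields them). Replacing your $w_0$ with the worst-case value and redoing the $\sigma$-term bookkeeping closes the gap.
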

\begin{proof}
We know from the statement of Theorem \ref{Thm: AD_Convergence_Main} that $\displaystyle w_t \leq  G^2 \max\bigg\{2 ,~ b^3 K^{3 \alpha} - \sum_{i=1}^t \frac{\bar{G}_i^2}{G^2},  ~\frac{(56b)^3}{K^{3 - 3\alpha}} \bigg\}$. This implies that, for $t = 0$, in the worst case we will have $w_0 = O(b^3 K^{3\alpha})$. Note that here the worst case refers to the worst case speedup achievable in terms of the number of WNs, $K$, present in the network. 

Moreover, after a finite number of iterations, specifically, for any $T$ such that $\displaystyle \mathbb{P}\Big[\sum_{i=1}^T ({\bar{G}_i^2}/{G^2})  \geq b^3 K^{3\alpha} \Big] = 1$, we will have $\displaystyle w_T \leq G^2 \max\bigg\{2 , ~\frac{(56b)^3}{K^{3 - 3\alpha}} \bigg\}$, i.e., a constant. Note that this follows because even if the gradients $\nabla f(\bar{x}_t)$ go to zero, the variance of the stochastic gradients of the individual nodes (variance of $\nabla f^{(k)}(x_t^{(k)}; \xi_t^{(k)})$) keeps them from going to zero. Now we consider two regimes as: \vspace{0.1 in}\\
{\bf Regime 1:} When we have $K^{1 - \alpha} \geq 56b/2^{1/3}$. \vspace{0.05 in}\\
This means $\displaystyle 2 \geq {(56b)^3}/{K^{3 - 3\alpha}}$ which further implies that we have $w_T = O(2 G^2)$. So under Regime 1, using $w_T = 2 G^2$ and $w_0 = b^3 K^{3\alpha}$ along with $\displaystyle \bar{\kappa} = \frac{b K^\alpha G^{2/3}}{L}$ and $\displaystyle c \leq \frac{56L^2}{K}$ as given in the statement of Theorem \ref{Thm: AD_Convergence_Main} in the result of Theorem \ref{Thm: AD_Convergence_Main} we get:
    \begin{align*}
   \frac{1}{T} \sum_{t=1}^T \mathbb{E}  \|  \nabla f(\bar{x}_{t}) \| & \leq  O \bigg(\frac{1}{K^{\alpha/2} T^{1/2}} + \frac{\sigma}{K^{\alpha/2} T^{1/2}} + \frac{\sqrt{\ln(T+2)}}{K^{1 - \alpha} T^{1/2}} \bigg) +  O\bigg(\frac{1}{K^{\alpha/2} T^{1/3}} + \frac{\sigma }{K^{\alpha/2} T^{1/3}} + \frac{ \sqrt{\ln(T+2)}}{K^{1 - \alpha} T^{1/3}} \bigg). 
\end{align*}
This follows by using $(x + y)^p \leq x^p + y^p$ for $x , y \geq 0$ and $p \leq 1$, to expand the terms in $(w_T + G^2T)^{1/6}$ and $(M(\bar{\kappa}, w_0, c,\sigma))^{1/2}$. Specifically, treating terms $3\big(f(\bar{x}_1) - f(x^\ast)\big)$, $\displaystyle \frac{w_0^{1/3}\sigma^2}{20 L^2 \bar{\kappa}}$  and  $\displaystyle \frac{c^2 \bar{\kappa}^3}{10 L^2} \ln (T+2)$ as three separate terms for expanding the powers of $M(\bar{\kappa}, w_0, c,\sigma)$. Moreover, on the left hand side of the inequality we have used Cauchy-Schwartz inequality to get:
\begin{align*}
       \frac{1}{T} \sum_{t=1}^T   \|  \nabla f(\bar{x}_{t}) \| \leq    \sqrt{ \frac{1}{T} \sum_{t=1}^T  \|  \nabla f(\bar{x}_{t}) \|^2 }  .
\end{align*}
Choosing $\displaystyle \alpha = \frac{2}{3}$ we get:
\begin{align*}
     \frac{1}{T} \sum_{t=1}^T   \mathbb{E}\|  \nabla f(\bar{x}_{t}) \| & \leq  O \bigg(\frac{1 + \sigma +  \sqrt{\ln(T+2)}}{K^{1/3} T^{1/2}}\bigg) + O \bigg(\frac{1 + \sigma  +  \sqrt{\ln(T+2)}}{K^{1/3} T^{1/3}}\bigg).
\end{align*}
Therefore, we have $(i)$ under Regime 1. Moreover, to obtain the $\epsilon$-stationary solution we need:
\begin{align*}
 O \bigg(\frac{\ln(T+2)}{K^{1/3} T^{1/3}} \bigg) \leq \epsilon \qquad \Rightarrow \qquad T \geq \tilde{O}(K^{-1} \epsilon^{-3}).
\end{align*}
Now, we consider Regime 2 as:\vspace{0.1 in}\\
{\bf Regime 2:} When we have $K^{1 - \alpha} \leq 56b/2^{1/3}$. \vspace{0.05 in}\\
This means $\displaystyle 2 \leq {(56b)^3}/{K^{3 - 3\alpha}}$ which further implies that we have $w_T = O(1/ K^{3-3\alpha})$.  

So under Regime 2, using $w_T =  1/ K^{3-3\alpha}$ along with $\displaystyle \bar{\kappa} = \frac{b K^\alpha G^{2/3}}{L}$ and $\displaystyle c \leq \frac{56L^2}{K}$ as given in the statement of Theorem \ref{Thm: AD_Convergence_Main} in the result of Theorem \ref{Thm: AD_Convergence_Main} we get:
    \begin{align*}
   \frac{1}{T} \sum_{t=1}^T \mathbb{E}  \|  \nabla f(\bar{x}_{t}) \| & \leq  O \bigg(\frac{1}{K^{1/2} T^{1/2}} + \frac{\sigma}{K^{1/2} T^{1/2}} + \frac{\sqrt{\ln(T+2)}}{K^{(3-3\alpha)/2} T^{1/2}} \bigg)  +  O\bigg(\frac{1}{K^{\alpha/2} T^{1/3}} + \frac{\sigma}{K^{\alpha/2} T^{1/3}} + \frac{\sqrt{\ln(T+2)}}{K^{1 - \alpha} T^{1/3}} \bigg).
\end{align*}
Again, choosing $\displaystyle \alpha = \frac{2}{3}$ we get:
\begin{align*}
     \frac{1}{T} \sum_{t=1}^T   \mathbb{E}\|  \nabla f(\bar{x}_{t}) \| & \leq  O \bigg(\frac{1 + \sigma +  \sqrt{\ln(T+2)}}{K^{1/2} T^{1/2}}\bigg) + O \bigg(\frac{1 + \sigma +   \sqrt{\ln(T+2)}}{K^{1/3} T^{1/3}}\bigg).
\end{align*}
Therefore, we have $(i)$ under Regime 2. Moreover, to achieve $\epsilon$-stationary solution we need:
\begin{align*}
 O \bigg(\frac{\ln(T+2)}{K^{1/3} T^{1/3}} \bigg) \leq \epsilon \qquad \Rightarrow \qquad T \geq \tilde{O}(K^{-1} \epsilon^{-3}).
\end{align*}
Therefore, we have the corollary.
\end{proof}
\begin{rem}
Centralized STORM proposed in \cite{Cutkosky_NIPS2019}, requires $\tilde{O}(\epsilon^{-3})$ gradient computations to achieve an $\epsilon$-stationary solution. In contrast to the centralize STORM, Corollary \ref{Cor: DA_Computation_Complexity} given above implies that for AD-STORM the total number of gradient evaluations at each WN in the worst case is reduced by a factor of $K$. This implies that AD-STORM is capable of achieving linear speedup with the number of WNs, $K$, while at the same time achieving optimal computational complexity compared to the state-of-the-art up to logarithmic factors \cite{Arjevani_Carmon_2019_LowerBounds}. 
\end{rem}
\begin{rem}
Note that the design of AD-STORM given in Algorithm \ref{Algo_AD-STORM} requires knowledge of the gradient bound, $G$, given in Assumption \ref{Ass: Unbiased_Var_Grad}. Moreover, the design of AD-STORM does not rely on the knowledge of the variance parameter $\sigma^2$, however, the convergence depends $\sigma^2$.
\end{rem}
Next, we present a non-adaptive version of the algorithm, D-STORM. The proposed non-adaptive algorithm is a special case of the adaptive algorithm which does not require the knowledge of $G$. Moreover, it does not even require the bounded gradient Assumption \ref{Ass: Unbiased_Var_Grad} to hold true. However, the design of the non-adaptive algorithm requires the knowledge of the variance parameter $\sigma^2$ to design the step sizes.
\section{Non-Adaptive Distributed Algorithm: D-STORM}
\label{sec: D-STORM}
\begin{algorithm}[t]
\caption{Distributed STORM - D-STORM}
\label{Algo_D-STORM}
\begin{algorithmic}[1]
	\State{\textbf{Input}: Parameters: $\bar{\kappa}$, $\{w_t\}_{t=0}^{T}$ and $c$}
	\State{\textbf{Initialize}:  Iterate $x_1^{(k)} = \bar{x}_1$, descent direction $d_1^{(k)} = \bar{d}_1 = \frac{1}{K}\sum_{k=1}^K \nabla f^{(k)}(x_1^{(k)} ; \xi_1^{(k)})$ for all $k \in [K]$, step size $\eta_0 = \frac{\bar{\kappa}}{w_0^{1/3}}$.}
	\For{$t = 1$ to $T$}
    	\For{$k = 1$ to $K$}
    	\State{$ \eta_{t} = \frac{\bar{\kappa}}{(w_t + \sigma^2 t )^{1/3}}$}
        	\State{$ x_{t+1}^{(k)} =  x_t^{(k)} - \eta_{t} d_{t}^{(k)}$}
        			\State{$a_{t+1} = c \eta_{t}^2$}
        	\State{$ d_{t+1}^{(k)} = \nabla f^{(k)}(x_{t+1}^{(k)} ; \xi_{t+1}^{(k)}) + (1 - a_{t+1})      \big(  d_{t}^{(k)} - \nabla f^{(k)}(x_{t}^{(k)} ; \xi_{t+1}^{(k)}) \big)$ \hfill $\to$ Forward $d_{t+1}^{(k)}$ to SN}
        	\State{$ d_{t+1}^{(k)} = \bar{d}_{t+1} = \frac{1}{K} \sum_{k=1}^K d_{t+1}^{(k)}$ \hfill $\to$ Receive $\bar{d}_{t+1}$ from SN}
	    \EndFor
	\EndFor
\State{{\bf Return:} $x_a$ chosen uniformly randomly from $\{x_t\}_{t=1}^T$}	
\end{algorithmic}
\end{algorithm}
In this section, we present the non-adaptive version of the distributed algorithm developed in Section \ref{sec: AD-STORM}. As pointed out earlier, the proposed algorithm, D-STORM, does not rely on the Bounded Gradient Assumption given in Assumption \ref{Ass: Unbiased_Var_Grad}. In fact D-STORM, replaces $G^2$ by $\sigma^2$ in the design of the step sizes and still guarantees the same convergence as for AD-STORM.  

The steps of the algorithm D-STORM are given in Algorithm \ref{Algo_D-STORM}. After specifying a few parameters (please see Theorem \ref{Thm: Convergence_Main}), in Step 2 of the algorithm we initialize the algorithm with the same initial iterate, $x_1^{(k)} = \bar{x}_1$, at each WN. Also, each node uses the same initial descent direction, $d_1^{(k)} = \bar{d}_1$, which is constructed using unbiased estimates of the gradients at individual WNs, $\nabla f^{(k)}(x_1^{(k)}; \xi_1^{(k)})$, for all $k \in [K]$ with $\xi_1^{(k)} \sim \mathcal{D}^{(k)}$. Each node then computes the step size, $\eta_t$, according to Step 5 and then updates the iterate in Step 6 of the algorithm. Note at this stage that as each node had the same initial iterate, $\bar{x}_1$, and each node uses the same descent direction, the updated iterate, $x_{t+1}^{k}$, will be same across all $k \in [K]$. Therefore, we denote  $x_{t+1}^{k} = \bar{x}_{t+1}$ for $t \in [T]$. Then in Step 7, the momentum parameter is updated which is then used to compute the new local descent direction, $d_{t+1}^{(k)}$, in Step 8. Finally, in Step 9 of the algorithm, the local descent directions, $d_{t+1}^{(k)}$, are forwarded to the SN and updated descent direction, $\bar{d}_{t+1}$, are received from the SN at the WNs. The process is repeated until convergence. 

Next, we present the convergence guarantees associated with the algorithm. 
\subsection{Analysis: D-STORM}
\label{subsec: D-STORM_Convergence}
The proof for the convergence of D-STORM follows the same approach as for the AD-STORM. However, the proof is relatively simpler as the step size $\eta_t$ for D-STORM does not depend on the stochastic gradients and is thereby deterministic.

First, we present the Descent lemma. 
\begin{lemma}[Descent Lemma]
\label{Lem: DescentLemma}
For $\eta_t \leq \frac{1}{L}$ and $\bar{e}_t = \bar{d}_t - \nabla f(\bar{x}_{t})$, we have:
\begin{align*}
  \mathbb{E} f(\bar{x}_{t + 1})   \leq   \mathbb{E}  f(\bar{x}_{t })  - \frac{\eta_t}{2}   \mathbb{E} \|\nabla f(\bar{x}_t) \|^2 + \frac{\eta_t}{2}   \mathbb{E} \| \bar{e}_t  \|^2. 
\end{align*}
\end{lemma}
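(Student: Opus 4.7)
The proof plan is essentially to mirror the argument of Lemma~\ref{Lem: AD_DescentLemma}, since the averaged iterate in D-STORM satisfies the same structural update $\bar{x}_{t+1} = \bar{x}_t - \eta_t \bar{d}_t$ (using Steps~6 and 9 of Algorithm~\ref{Algo_D-STORM} together with the fact that the common initialization and common descent direction force $x_{t+1}^{(k)} = \bar{x}_{t+1}$ for all $k$). The only simplification relative to AD-STORM is that $\eta_t$ is deterministic here, so it may be pulled out of expectations without any conditioning argument.

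First I would invoke the $L$-smoothness of $f$, which is inherited from Assumption~\ref{Ass: Lip_Smoothness} on the sample functions via $\nabla f = \mathbb{E}[\nabla f^{(k)}(\cdot;\xi^{(k)})]$ and Jensen's inequality, to write
\[
f(\bar{x}_{t+1}) \leq f(\bar{x}_t) + \langle \nabla f(\bar{x}_t),\, \bar{x}_{t+1} - \bar{x}_t \rangle + \frac{L}{2}\|\bar{x}_{t+1} - \bar{x}_t\|^2.
\]
Substituting the update $\bar{x}_{t+1} - \bar{x}_t = -\eta_t \bar{d}_t$ produces $f(\bar{x}_t) - \eta_t \langle \nabla f(\bar{x}_t), \bar{d}_t\rangle + \tfrac{\eta_t^2 L}{2}\|\bar{d}_t\|^2$, which mirrors step $(a)$ in the proof of Lemma~\ref{Lem: AD_DescentLemma}.

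Next I would apply the polarization identity $\langle a,b\rangle = \tfrac{1}{2}(\|a\|^2 + \|b\|^2 - \|a-b\|^2)$ with $a = \nabla f(\bar{x}_t)$ and $b = \bar{d}_t$, exactly as in step $(b)$ of that earlier proof, replacing $\|\bar{d}_t - \nabla f(\bar{x}_t)\|^2$ by $\|\bar{e}_t\|^2$ throughout. Collecting the coefficient of $\|\bar{d}_t\|^2$ yields $-\tfrac{\eta_t}{2} + \tfrac{\eta_t^2 L}{2} = -\tfrac{\eta_t}{2}(1 - \eta_t L)$, which is nonpositive under the hypothesis $\eta_t \leq 1/L$, and can therefore be dropped. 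The remaining terms give
\[
f(\bar{x}_{t+1}) \leq f(\bar{x}_t) - \frac{\eta_t}{2}\|\nabla f(\bar{x}_t)\|^2 + \frac{\eta_t}{2}\|\bar{e}_t\|^2,
\]
and taking expectations concludes the argument.

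There is no substantive obstacle: the proof transfers line-by-line from Lemma~\ref{Lem: AD_DescentLemma} and is in fact slightly cleaner because determinism of $\eta_t$ means no care needs to be taken when moving the step size across the expectation operator. The only mild technical point worth flagging is the reduction of $L$-smoothness from the sample functions $f^{(k)}(\cdot;\xi^{(k)})$ to $f$ itself, which is standard.
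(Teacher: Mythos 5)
Your proposal is correct and matches the paper's treatment: the paper itself proves this lemma simply by noting it follows exactly the argument of Lemma~\ref{Lem: AD_DescentLemma} (smoothness, substitution of the update $\bar{x}_{t+1}=\bar{x}_t-\eta_t\bar{d}_t$, the polarization identity, and dropping the nonpositive $\|\bar{d}_t\|^2$ term under $\eta_t\le 1/L$). Your minor variation of applying the polarization identity directly to $\langle \nabla f(\bar{x}_t),\bar{d}_t\rangle$ rather than to $\langle \bar{d}_t-\nabla f(\bar{x}_t),\bar{d}_t\rangle$ is algebraically equivalent, and your observation that the deterministic step size simplifies the handling of expectations is accurate.
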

The proof follows exactly the same approach as the proof of Lemma \ref{Lem: AD_DescentLemma}.
Next, we present the lemma for error contraction. 
\begin{lemma}[Error Contraction] 
\label{Lem: ErrorContractionLemma}
The error term $\bar{e}_t$ from Algorithm \ref{Algo_D-STORM} satisfies the following:
\begin{align*}
    \mathbb{E} \| \bar{e}_t \|^2  \leq  \bigg( (1 - a_t)^2  + \frac{4 (1 - a_t)^2 L^2 \eta_{t-1}^2}{K} \bigg) \mathbb{E}\|  \bar{e}_{t-1} \|^2 + \frac{4 (1 - a_t)^2 L^2 \eta_{t-1}^2}{K}  \mathbb{E}\|  \nabla f(\bar{x}_{t-1}) \|^2 + \frac{2a_t^2 \sigma^2}{K}.
\end{align*}
\end{lemma}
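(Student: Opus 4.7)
The plan is to mirror the proof of Lemma \ref{Lem: AD_ErrorContractionLemma} for AD-STORM, with two simplifications and one substitution. The simplifications are that $\eta_{t-1}$ is now deterministic (so we do not need to carry it inside the expectation nor normalize by it), and we only need to control $\mathbb{E}\|\bar{e}_t\|^2$ rather than $\mathbb{E}[\|\bar{e}_t\|^2/\eta_{t-1}]$. The substitution is that, since D-STORM does not assume a bounded stochastic gradient, the last variance term must be bounded using the variance bound $\sigma^2$ from Assumption \ref{Ass: Unbiased_Var_Grad} rather than by $\bar{G}_t^2$.

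First I would start from $\bar{e}_t = \bar{d}_t - \nabla f(\bar{x}_t)$, substitute the descent direction update from Step 8 of Algorithm \ref{Algo_D-STORM}, and insert $\pm (1-a_t)\nabla f(\bar{x}_{t-1})$ to pull out the term $(1-a_t)\bar{e}_{t-1}$. Since the iterates are synchronized, $x_t^{(k)} = \bar{x}_t$ for all $k$, so $\bar{e}_t$ decomposes as
\[
\bar{e}_t = (1-a_t)\bar{e}_{t-1} + \frac{1}{K}\sum_{k=1}^K U_t^{(k)},
\]
where $U_t^{(k)} := \bigl(\nabla f^{(k)}(\bar{x}_t;\xi_t^{(k)}) - \nabla f^{(k)}(\bar{x}_t)\bigr) - (1-a_t)\bigl(\nabla f^{(k)}(\bar{x}_{t-1};\xi_t^{(k)}) - \nabla f^{(k)}(\bar{x}_{t-1})\bigr)$.

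Next I would expand $\|\bar{e}_t\|^2$ and take expectation. Conditioning on the history up to time $t-1$, the cross term $\langle (1-a_t)\bar{e}_{t-1}, \frac{1}{K}\sum_k U_t^{(k)}\rangle$ has zero mean by Assumption \ref{Ass: Unbiased_Var_Grad}(1) (the analogue of Lemma \ref{Lem: AD_InnerProduct_e_t_Grad}), and the cross terms $\langle U_t^{(k)}, U_t^{(l)}\rangle$ for $k\neq l$ vanish because the samples $\xi_t^{(k)}$ are independent across nodes (the analogue of Lemma \ref{Lem: AD_InnerProd_AcrossNodes}). This leaves
\[
\mathbb{E}\|\bar{e}_t\|^2 = (1-a_t)^2\, \mathbb{E}\|\bar{e}_{t-1}\|^2 + \frac{1}{K^2}\sum_{k=1}^K \mathbb{E}\|U_t^{(k)}\|^2.
\]

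Then I would bound each $\mathbb{E}\|U_t^{(k)}\|^2$ by writing $U_t^{(k)}$ as $(1-a_t)$ times a Lipschitz difference plus $a_t$ times the centered noise $\nabla f^{(k)}(\bar{x}_t;\xi_t^{(k)}) - \nabla f^{(k)}(\bar{x}_t)$, applying the $\|a+b\|^2 \leq 2\|a\|^2 + 2\|b\|^2$ inequality (Lemma \ref{Lem: Norm_Ineq}), then using $L$-smoothness on the first piece and the variance bound $\sigma^2$ on the second. The crucial departure from the AD-STORM proof is here: instead of bounding the $a_t$ term by $\|\nabla f^{(k)}(\bar{x}_t;\xi_t^{(k)})\|^2 \leq G^2$, the centered form allows a direct application of $\mathbb{E}\|\nabla f^{(k)}(\bar{x}_t;\xi_t^{(k)}) - \nabla f^{(k)}(\bar{x}_t)\|^2 \leq \sigma^2$. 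This produces
\[
\frac{1}{K^2}\sum_{k=1}^K \mathbb{E}\|U_t^{(k)}\|^2 \leq \frac{2(1-a_t)^2 L^2}{K}\,\mathbb{E}\|\bar{x}_t - \bar{x}_{t-1}\|^2 + \frac{2a_t^2 \sigma^2}{K}.
\]

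Finally I would substitute the iterate update $\bar{x}_t - \bar{x}_{t-1} = -\eta_{t-1}\bar{d}_{t-1}$, then split $\bar{d}_{t-1} = \nabla f(\bar{x}_{t-1}) + \bar{e}_{t-1}$ and apply Lemma \ref{Lem: Norm_Ineq} one more time to obtain the stated bound. I do not expect a real obstacle: the main subtlety is being careful that the $a_t$-term is centered (so that the variance bound, not a bounded-gradient bound, suffices), which is precisely why D-STORM can dispense with Assumption \ref{Ass: Unbiased_Var_Grad}(3) at the cost of needing to know $\sigma^2$.
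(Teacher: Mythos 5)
Your proposal is correct and follows essentially the same route as the paper's proof: the same decomposition $\bar{e}_t = (1-a_t)\bar{e}_{t-1} + \frac{1}{K}\sum_k U_t^{(k)}$, the same vanishing cross terms (Lemmas \ref{Lem: InnerProduct_e_t_Grad} and \ref{Lem: InnerProd_AcrossNodes}), the same split of $U_t^{(k)}$ into a Lipschitz difference plus $a_t$ times centered noise bounded by $\sigma^2$, and the same final substitution of the iterate update. The one step you compress --- passing from the centered difference to $\mathbb{E}\|\nabla f^{(k)}(\bar{x}_t;\xi_t^{(k)}) - \nabla f^{(k)}(\bar{x}_{t-1};\xi_t^{(k)})\|^2$ before invoking smoothness --- is exactly the paper's Lemma \ref{Lem: Mean_Variance_Bound}, so nothing is missing.
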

\begin{proof}
Using the definition of $\bar{e}_t$ we have
\begin{align}
   & \mathbb{E} \| \bar{e}_t \|^2  =  \mathbb{E} \big\| \bar{d}_t - \nabla f(\bar{x}_t)  \big\|^2 \nonumber \\
    & \overset{(a)}{=} \mathbb{E} \bigg\| \frac{1}{K} \sum_{k = 1}^K \nabla f^{(k)}(x^{(k)}_{t};\xi_t^{(k)}) + (1 - a_t)\left( \bar{d}_{t-1} - \frac{1}{K} \sum_{k = 1}^K \nabla f^{(k)}(x^{(k)}_{t - 1}; \xi_t^{(k)})\right) - \nabla f(\bar{x}_t)   \bigg\|^2 \nonumber\\
    &  \overset{(b)}{=} \mathbb{E} \bigg\| \frac{1}{K} \sum_{k = 1}^K \left[\left( \nabla f^{(k)}(\bar{x}_t;\xi_t^{(k)})  -  \nabla f^{(k)}(\bar{x}_t) \right)    - (1 - a_t) \left( \nabla f^{(k)}(\bar{x}_{t - 1}; \xi_t^{(k)}) - \nabla f^{(k)}(\bar{x}_{t - 1})\right) \right] + (1 - a_t) \bar{e}_{t-1}   \bigg\|^2 \nonumber \\
    &   \overset{(c)}{=} (1 - a_t)^2 \mathbb{E}\| \bar{e}_{t-1}\|^2  + \frac{1}{K^2 }\mathbb{E}\bigg\|\sum_{k = 1}^K \left[\left( \nabla f^{(k)}(\bar{x}_t;\xi_t^{(k)})  -  \nabla f^{(k)}(\bar{x}_t) \right)    - (1 - a_t) \left( \nabla f^{(k)}(\bar{x}_{t - 1}; \xi_t^{(k)}) - \nabla f^{(k)}(\bar{x}_{t - 1})\right) \right] \bigg\|^2 \nonumber\\
    & \qquad +2 \underbrace{\mathbb{E} \left\langle  (1 - a_t) \bar{e}_{t-1}, \frac{1}{K}\sum_{k = 1}^K \left[\left( \nabla f^{(k)}(\bar{x}_t;\xi_t^{(k)})  -  \nabla f^{(k)}(\bar{x}_t) \right)    - (1 - a_t) \left( \nabla f^{(k)}(\bar{x}_{t - 1}; \xi_t^{(k)}) - \nabla f^{(k)}(\bar{x}_{t - 1})\right) \right]   \right\rangle}_{=0} \nonumber \\
    &  \overset{(d)}{=} (1 - a_t)^2 \mathbb{E}\| \bar{e}_{t-1}\|^2 + \frac{1}{K^2 } \sum_{k = 1}^K  \mathbb{E}\big\| \left( \nabla f^{(k)}(\bar{x}_t;\xi_t^{(k)})  -  \nabla f^{(k)}(\bar{x}_t) \right)    - (1 - a_t) \left( \nabla f^{(k)}(\bar{x}_{t - 1}; \xi_t^{(k)}) - \nabla f^{(k)}(\bar{x}_{t - 1})\right)  \big\|^2 \nonumber\\
    & \qquad \qquad + \frac{1}{K^2} \sum_{k,l \in [K],k \neq l} \mathbb{E} \bigg\langle   \left( \nabla f^{(k)}(\bar{x}_t;\xi_t^{(k)})  -  \nabla f^{(k)}(\bar{x}_t) \right)    - (1 - a_t) \left( \nabla f^{(k)}(\bar{x}_{t - 1}; \xi_t^{(k)}) - \nabla f^{(k)}(\bar{x}_{t - 1})\right), \nonumber\\
   & \qquad \qquad \qquad  \qquad  \quad    \underbrace{ \qquad \quad \left( \nabla f^{(l)}(\bar{x}_t;\xi_t^{(l)})  -  \nabla f^{(l)}(\bar{x}_t) \right)    - (1 - a_t) \left( \nabla f^{(l)}(\bar{x}_{t - 1}; \xi_t^{(l)}) - \nabla f^{(l)}(\bar{x}_{t - 1})\right) \bigg\rangle}_{=0} \nonumber\\
   & \overset{(e)}{=} (1 - a_t)^2 \mathbb{E}\| \bar{e}_{t-1}\|^2 + \frac{1}{K^2 } \sum_{k = 1}^K  \mathbb{E}\big\| \left( \nabla f^{(k)}(\bar{x}_{t};\xi_t^{(k)})  -  \nabla f^{(k)}(\bar{x}_t) \right)    - (1 - a_t) \left( \nabla f^{(k)}(\bar{x}_{t - 1}; \xi_t^{(k)}) - \nabla f^{(k)}(\bar{x}_{t - 1})\right)  \big\|^2.
   \label{Eq: Error_Contraction1}
\end{align}
where $(a)$ follows from the definition of the descent direction $d_t^{(k)}$ given in Step 9 of Algorithm \ref{Algo_D-STORM}, $(b)$ follows from adding and subtracting $(1- a_t) \frac{1}{K} \sum_{k=1}^K \nabla f^{(k)}(\bar{x}_{t-1}) (= (1 - a_t) \nabla f(\bar{x}_{t-1})$), the fact that $x_t^{(k)} = \bar{x}_t$ for all $t \in [T]$ (please see Step 6 of Algorithm \ref{Algo_D-STORM}) and using the definition of $\bar{e}_{t-1}$, $(c)$ follows from expanding the norm using inner products, $(d)$ follows from Lemma \ref{Lem: InnerProduct_e_t_Grad} and again expanding the norm using the inner products, finally $(e)$ results from the usage of Lemma \ref{Lem: InnerProd_AcrossNodes}.

Now let us consider the 2nd term of \eqref{Eq: Error_Contraction1} above we have:
\begin{align}
  &  \frac{1}{K^2} \sum_{k = 1}^K  \mathbb{E}\big\| \left( \nabla f^{(k)}(\bar{x}_{t};\xi_t^{(k)})  -  \nabla f^{(k)}(\bar{x}_{t}) \right)    - (1 - a_t) \left( \nabla f^{(k)}(\bar{x}_{t - 1}; \xi_t^{(k)}) - \nabla f^{(k)}(\bar{x}_{t - 1})\right)  \big\|^2 \nonumber\\
    & =  \frac{1}{K^2} \sum_{k = 1}^K  \mathbb{E}\big\| (1 -a_t) \left[ \left( \nabla f^{(k)}(\bar{x}_{t};\xi_t^{(k)})  -  \nabla f^{(k)}(\bar{x}_{t}) \right)    - \left( \nabla f^{(k)}(\bar{x}_{t - 1}; \xi_t^{(k)}) - \nabla f^{(k)}(\bar{x}_{t - 1})\right) \right] \nonumber \\
    & \qquad \qquad \qquad  \qquad \qquad \qquad \qquad  \qquad \qquad \qquad \qquad    + a_t   \left( \nabla f^{(k)}(\bar{x}_{t}; \xi_t^{(k)}) - \nabla f^{(k)}(\bar{x}_{t})\right)      \big\|^2 \nonumber\\
    & \overset{(a)}{\leq} \frac{2 (1 - a_t)^2}{K^2} \sum_{k=1}^K \mathbb{E} \big\| \left( \nabla f^{(k)}(\bar{x}_{t};\xi_t^{(k)})  -  \nabla f^{(k)}(\bar{x}_{t - 1}; \xi_t^{(k)}) \right)    -  \left(\nabla f^{(k)}(\bar{x}_{t}) - \nabla f^{(k)}(\bar{x}_{t - 1})\right) \big\|^2 \nonumber\\
    & \qquad \qquad \qquad  \qquad \qquad \qquad \qquad \qquad \qquad \qquad   + \frac{2 a_t^2}{K^2} \sum_{k=1}^K \mathbb{E}\big\|    \nabla f^{(k)}(\bar{x}_{t}; \xi_t^{(k)}) - \nabla f^{(k)}(\bar{x}_{t})    \big\|^2 \nonumber\\
    & \overset{(b)}{\leq} \frac{2 (1 - a_t)^2}{K^2} \sum_{k=1}^K \mathbb{E} \big\|   \nabla f^{(k)}(\bar{x}_t;\xi_t^{(k)})  -  \nabla f^{(k)}(\bar{x}_{t - 1}; \xi_t^{(k)}) \big\|^2 + \frac{2a_t^2 \sigma^2}{K} \nonumber\\
    & \overset{(c)}{\leq} \frac{2 (1 - a_t)^2 L^2}{K^2} \sum_{k=1}^K \mathbb{E}\| \bar{x}_t - \bar{x}_{t-1} \|^2 + \frac{2a_t^2 \sigma^2}{K} \nonumber \\
    & \overset{(d)}{=} \frac{2 (1 - a_t)^2 L^2 \eta_{t-1}^2}{K}  \mathbb{E}\|  \bar{d}_{t-1} \|^2 + \frac{2a_t^2 \sigma^2}{K}.
    \label{Eq: Error_Contraction2}
\end{align}
where $(a)$ follows from Lemma \ref{Lem: Norm_Ineq}, $(b)$ follows from Lemma \ref{Lem: Mean_Variance_Bound} and the variance bound given in Assumption \ref{Ass: Unbiased_Var_Grad}, $(c)$ follows from the Lipschitz continuity of the gradient given in Assumption \ref{Ass: Lip_Smoothness} and, finally, $(d)$ follows from the update Step 6 of Algorithm \ref{Algo_D-STORM}.

Replacing, \eqref{Eq: Error_Contraction2} in \eqref{Eq: Error_Contraction1} we get:
\begin{align}
    \mathbb{E} \| \bar{e}_t \|^2  & \leq (1 - a_t)^2 \mathbb{E}\| \bar{e}_{t-1}\|^2 + \frac{2 (1 - a_t)^2 L^2 \eta_{t-1}^2}{K}  \mathbb{E}\|  \bar{d}_{t-1} \|^2 + \frac{2a_t^2 \sigma^2}{K} \nonumber \\
     & \overset{(a)}{\leq} \bigg( (1 - a_t)^2  + \frac{4 (1 - a_t)^2 L^2 \eta_{t-1}^2}{K} \bigg) \mathbb{E}\|  \bar{e}_{t-1} \|^2 + \frac{4 (1 - a_t)^2 L^2 \eta_{t-1}^2}{K}  \mathbb{E}\|  \nabla f(\bar{x}_{t-1}) \|^2 + \frac{2a_t^2 \sigma^2}{K}.
     \label{eq: D_ErrorDescent}
\end{align}
where $(a)$ above follows by adding and subtracting $\nabla f(\bar{x}_{t-1})$ inside the norm $\| \bar{d}_{t-1}\|^2$ and using Lemma \ref{Lem: Norm_Ineq}. 

Therefore, we have the result.
\end{proof}
Note the similarity of \eqref{eq: D_ErrorDescent} with \eqref{eq: AD_ErrorDescent}, $\bar{G}_t$ in \eqref{eq: AD_ErrorDescent} is replaced by $\sigma^2$ in \eqref{eq: D_ErrorDescent}. Moreover, since the step sizes and $\bar{G}_t$ are random in \eqref{eq: AD_ErrorDescent}, we have the expectations with all the random quantities. We again use the same Potential function as defined earlier in \eqref{Eq: AD_PotentialFn}. Here, we define it again for convenience. 
\begin{align}
\label{Eq: PotentialFn}
\Phi_t = f(\bar{x}_t) + \frac{K}{48L^2 \eta_{t-1}} \| \bar{e}_t \|^2.
\end{align}
Using the above two lemmas finally we can state the main convergence result for D-STORM in the next theorem. 
\begin{theorem}
\label{Thm: Convergence_Main}
Under the assumptions given in Section \ref{sec: Problem} and for the choice of parameters:
\begin{enumerate}[(i)]
    \item For any $\displaystyle b^3 \geq \frac{2^{2/3}}{84}$.
    \item $\displaystyle \bar{\kappa} = \frac{b K^\alpha \sigma^{2/3}}{L}$.
    \item $\displaystyle c = \frac{28L^2}{K} + \frac{2^{2/3} \sigma^2}{3 L \bar{\kappa}^3} = L^2 \bigg( \frac{28}{K} + \frac{2^{2/3}}{3 b^3 K^{3\alpha}} \bigg) \overset{(i)}{\leq} \frac{56 L^2}{K}$.
    \item $\displaystyle w_t = \max \bigg\{2 \sigma^2, \bar{\kappa}^3L^3 - \sigma^2t,  \frac{\bar{\kappa}^3 c^3}{L^3} \bigg\} \overset{(ii), (iii)}{\leq}  \sigma^2 \max\bigg\{2 ,~ b^3 K^{3 \alpha} - t,  ~\frac{(56b)^3}{K^{3 - 3\alpha}} \bigg\}$.
\end{enumerate}
Then the algorithm D-STORM satisfies:
\begin{align*}
    \mathbb{E}[\| \nabla f(x_a) \|^2] & \leq  \frac{1}{T} \bigg[ \frac{3 w_T^{1/3} \mathbb{E}[f(\bar{x}_1) - f(x^\ast)]}{\bar{\kappa}} + \frac{w_T^{1/3} w_0^{1/3} \sigma^2}{20L^2\bar{\kappa}^2} + \frac{w_T^{1/3} c^2 \bar{\kappa}^2}{10L^2} \ln(T+1) \bigg]     \\
    & \qquad \qquad \qquad \qquad + \frac{1}{T^{2/3}} \bigg[ \frac{3  \sigma^{2/3}  \mathbb{E}[f(\bar{x}_1) - f(x^\ast)]}{\bar{\kappa}} +   \frac{\sigma^{8/3}  w_0^{1/3}}{20 L^2 \bar{\kappa}^2 }  +  \frac{\sigma^{2/3}  c^2 \bar{\kappa}^2 }{10 L^2 } \ln (T+1) \bigg].
\end{align*}
\end{theorem}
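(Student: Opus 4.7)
The plan is to follow the same potential-function argument used for Theorem \ref{Thm: AD_Convergence_Main}, but with the simplification that the step sizes $\eta_t$ are deterministic, so expectations no longer couple with $\eta_t$ as they did in the adaptive case, and the variance surrogate $\bar{G}_t^2$ is replaced by the deterministic quantity $\sigma^2$ (this is where Assumption \ref{Ass: Unbiased_Var_Grad}'s variance bound enters, replacing the bounded-gradient assumption used for AD-STORM). I would reuse the Lyapunov potential $\Phi_t = f(\bar{x}_t) + \frac{K}{48L^2\eta_{t-1}} \|\bar{e}_t\|^2$ from \eqref{Eq: PotentialFn}.

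First, I would apply Lemma \ref{Lem: DescentLemma} and sum from $t=1$ to $T$ to get the analog of \eqref{eq: AD_Smoothness_Sum}. Next, I would take Lemma \ref{Lem: ErrorContractionLemma} and divide through by $\eta_t$ to form $\|\bar{e}_{t+1}\|^2/\eta_t - \|\bar{e}_t\|^2/\eta_{t-1}$, exactly as in \eqref{eq: AD_Error_Potential}. The coefficient in front of $\|\bar{e}_t\|^2$ becomes
\begin{align*}
\eta_t^{-1}(1-a_{t+1})(1 + 4L^2\eta_t^2/K) - \eta_{t-1}^{-1} \leq \eta_t^{-1} - \eta_{t-1}^{-1} + \eta_t(4L^2/K - c),
\end{align*}
mirroring \eqref{Eq: AD_Coefficient_e_t}. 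The main technical step is to bound $\eta_t^{-1} - \eta_{t-1}^{-1}$. Using $w_t \leq w_{t-1}$ (monotonicity of the $w_t$ definition in $(iv)$), the sub-additivity inequality $(x+y)^{1/3} - x^{1/3} \leq y/(3x^{2/3})$, and the floor $w_t \geq 2\sigma^2$, the same chain of manipulations as in \eqref{Eq: AD_Coefficient_e_t_1} yields $\eta_t^{-1} - \eta_{t-1}^{-1} \leq 2^{2/3}\sigma^2\eta_t/(3L\bar{\kappa}^3)$. Combined with the choice of $c$ in $(iii)$, which is tuned exactly to cancel this residual plus $4L^2/K$, one obtains the clean estimate $\leq -24L^2\eta_t/K$, which is the analog of \eqref{Eq: AD_Coefficient_e_t_Bound}.

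Plugging back, multiplying by $K/(48L^2)$, and using $a_{t+1} = c\eta_t^2$ so that $a_{t+1}^2\sigma^2/(K\eta_t) = c^2\eta_t^3\sigma^2/K$, I would obtain the per-step bound
\begin{align*}
\mathbb{E}[\Phi_{t+1} - \Phi_t] \leq -\frac{5\eta_t}{12}\mathbb{E}\|\nabla f(\bar{x}_t)\|^2 + \frac{c^2\sigma^2\eta_t^3}{24L^2}.
\end{align*}
Summing telescopes the left side. For the residual sum $\sum_t c^2\sigma^2\eta_t^3/(24L^2) = \sum_t c^2\bar{\kappa}^3\sigma^2/(24L^2(w_t + \sigma^2 t))$, I would again use $w_t \geq 2\sigma^2$ and Lemma \ref{Lem: AD_Sum_1overT} to obtain a $c^2\bar{\kappa}^3/(24L^2)\cdot\ln(T+1)$ bound, analogous to \eqref{Eq: AD_Coefficient_Variance_Sum}.

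Finally, lower-bounding $\Phi_{T+1} \geq f(x^\ast)$, using $\|\bar{e}_1\|^2$ via Lemma \ref{Lem: AD_e_bar_bound} and $\eta_0 = \bar{\kappa}/w_0^{1/3}$, and invoking monotonicity $\eta_T \leq \eta_t$ to pull out the smallest step size, I would arrive at
\begin{align*}
\eta_T \sum_{t=1}^T \mathbb{E}\|\nabla f(\bar{x}_t)\|^2 \leq 3(f(\bar{x}_1) - f(x^\ast)) + \frac{w_0^{1/3}\sigma^2}{20L^2\bar{\kappa}} + \frac{c^2\bar{\kappa}^3}{10L^2}\ln(T+1).
\end{align*}
Since $x_a$ is uniform on $\{x_t\}_{t=1}^T$, dividing by $T$ and substituting $1/\eta_T = (w_T + \sigma^2 T)^{1/3}/\bar{\kappa}$ yields the result, and using the elementary inequality $(w_T + \sigma^2 T)^{1/3} \leq w_T^{1/3} + \sigma^{2/3}T^{1/3}$ cleanly splits the bound into the two summands $1/T$ and $1/T^{2/3}$ displayed in the theorem statement. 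The main obstacle I anticipate is the careful matching in step three — verifying that the choice of $c$ in $(iii)$ and the floor $w_t \geq 2\sigma^2$ together produce the precise cancellation that kills the positive $\|\bar{e}_t\|^2$ residual — but because the structure is identical to the adaptive case (with $\sigma^2$ in place of $\bar{G}_t^2$), the same algebra goes through.
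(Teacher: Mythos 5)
Your proposal is correct and follows essentially the same route as the paper's own proof: the identical potential function, the same coefficient cancellation via the choice of $c$ and the floor $w_t \geq 2\sigma^2$, the same logarithmic bound on the residual sum via Lemma \ref{Lem: AD_Sum_1overT}, and the same final substitution of $\eta_T$ with the sub-additivity of the cube root. All the intermediate constants you state (the per-step bound $-\tfrac{5\eta_t}{12}\mathbb{E}\|\nabla f(\bar{x}_t)\|^2 + \tfrac{c^2\sigma^2\eta_t^3}{24L^2}$ and the final right-hand side) match the paper's derivation exactly.
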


\begin{proof}
First, using Lemma \ref{Lem: DescentLemma} and adding over $t = 1$ to $T$, we get:
\begin{align}
 \mathbb{E} [f(\bar{x}_{T + 1}) -   f(\bar{x}_{1})]  \leq     - \sum_{t=1}^T \frac{\eta_t}{2} \mathbb{E} \|\nabla f(\bar{x}_t) \|^2 + \sum_{t=1}^T \frac{\eta_t}{2} \mathbb{E} \| \bar{e}_t\|^2.
    \label{eq: D_Smoothness_Sum}
\end{align}
Now using Lemma \ref{Lem: ErrorContractionLemma} we compute:
\begin{align}
    \frac{\mathbb{E}\|\bar{e}_{t+1} \|^2}{\eta_t}-  \frac{\mathbb{E}\|\bar{e}_{t} \|^2}{\eta_{t-1}} & \leq \bigg( (1 - a_{t+1})^2  + \frac{4 (1 - a_{t+1})^2 L^2 \eta_{t}^2}{K} \bigg) \frac{\mathbb{E}\|  \bar{e}_{t} \|^2}{\eta_t} \nonumber\\
    & \qquad \qquad \qquad \quad + \frac{4 (1 - a_{t+1})^2 L^2 \eta_{t}}{K}  \mathbb{E}\|  \nabla f(\bar{x}_t) \|^2  + \frac{2a_{t+1}^2 \sigma^2}{\eta_t K} -   \frac{\mathbb{E}\|\bar{e}_{t} \|^2}{\eta_{t-1}} \nonumber\\
    & =  \bigg( \eta_t^{-1} (1 - a_{t+1})^2 \bigg(1 + \frac{4 L^2 \eta_{t}^2}{K} \bigg) -  \eta_{t-1}^{-1} \bigg) \mathbb{E}\|  \bar{e}_{t} \|^2 \nonumber\\
    & \qquad \qquad \qquad \qquad \qquad + \frac{4 (1 - a_{t+1})^2 L^2 \eta_{t}}{K}  \mathbb{E}\|  \nabla f(\bar{x}_t) \|^2  + \frac{2c^2 \eta_t^3 \sigma^2}{K} \nonumber\\
    & \overset{(a)}{\leq}   \bigg( \eta_t^{-1} (1 - a_{t+1}) \bigg(1 + \frac{4 L^2 \eta_{t}^2}{K} \bigg) -  \eta_{t-1}^{-1} \bigg) \mathbb{E}\|  \bar{e}_{t} \|^2 + \frac{4 L^2 \eta_{t}}{K}  \mathbb{E}\|  \nabla f(\bar{x}_t) \|^2  + \frac{2c^2 \eta_t^3 \sigma^2}{K}.
    \label{eq: Error_Potential}
\end{align}
where $(a)$ follows from the fact that $0 < 1 - a_t < 1$

Let us consider the coefficient of the first term of \eqref{eq: Error_Potential}:
\begin{align}
     \eta_t^{-1} (1 - a_{t+1}) \bigg(1 + \frac{4 L^2 \eta_{t}^2}{K} \bigg) -  \eta_{t-1}^{-1} & = \eta_t^{-1} - \eta_{t-1}^{-1} + \eta_t^{-1} \bigg( \frac{4L^2 \eta_t^2}{K} - a_{t+1} \bigg( 1 + \frac{4L^2 \eta_t^2}{K} \bigg) \bigg) \nonumber\\
     & \overset{(a)}{\leq} \eta_t^{-1} - \eta_{t-1}^{-1} + \eta_t^{-1} \bigg( \frac{4L^2 \eta_t^2}{K} - a_{t+1}   \bigg) \nonumber\\
     & = \eta_t^{-1} - \eta_{t-1}^{-1} + \eta_t  \bigg( \frac{4L^2  }{K} - c  \bigg).
     \label{Eq: Coefficient_e_t}
\end{align}
where inequality $(a)$ utilizes the fact that ${4 L^2 \eta_t^2}/{K}  > 0$.

First, considering $\eta_t^{-1} - \eta_{t-1}^{-1}$ in \eqref{Eq: Coefficient_e_t} we have from the definition of $\eta_t$ in Algorithm \ref{Algo_D-STORM}:
\begin{align}
    \eta_t^{-1} - \eta_{t-1}^{-1} & =  \frac{ \big(w_t + \sigma^2 t \big)^{1/3} - \big(w_{t-1} + \sigma^2 (t-1)\big)^{1/3} }{\bar{\kappa}} \nonumber \\
    & \overset{(a)}{\leq}     \frac{ \big(w_t + \sigma^2 t \big)^{1/3} - \big(w_{t} + \sigma^2 (t-1)\big)^{1/3} }{\bar{\kappa}} \nonumber\\
    & \overset{(b)}{\leq} \frac{\sigma^2}{3 \bar{\kappa} \big(w_t + \sigma^2 (t - 1) \big)^{2/3}} \nonumber\\
    & = \frac{\sigma^2}{3 \bar{\kappa} \big(w_t - \sigma^2 + \sigma^2 t  \big)^{2/3}} \nonumber\\
     & \overset{(c)}{\leq} \frac{\sigma^2}{3 \bar{\kappa} \big(w_t/2  + \sigma^2 t  \big)^{2/3}} \nonumber\\
        & = \frac{2^{2/3} \sigma^2}{3 \bar{\kappa} \big(w_t  + 2 \sigma^2 t  \big)^{2/3}} \nonumber \\
   & \leq \frac{2^{2/3}\sigma^2}{3 \bar{\kappa} \big(w_t + \sigma^2 t  \big)^{2/3}} = \frac{2^{2/3}\sigma^2 \bar{\kappa}^2}{3 \bar{\kappa}^3 \big(w_t + \sigma^2 t  \big)^{2/3}}  \overset{(d)}{=} \frac{2^{2/3}\sigma^2 }{3 \bar{\kappa}^3 } \eta_t^2 \overset{(e)}{\leq} \frac{2^{2/3}\sigma^2}{3 L \bar{\kappa}^3 } \eta_t.
   \label{Eq: Coefficient_e_t_1}
\end{align}
where inequality $(a)$ uses the fact that we have $w_t \leq w_{t-1}$ which follows from the definition of $w_t$ given in the statement $(iv)$ of Theorem \ref{Thm: Convergence_Main} and $(b)$ follows from:
$$(x + y)^{1/3} - x^{1/3} \leq \frac{y}{3x^{2/3}}.$$
In inequality $(c)$, we have used the fact that $w_t \geq 2 \sigma^2$, finally in $(d)$ and $(e)$ we used the definition of $n_t$ given in Algorithm \ref{Algo_D-STORM} and the fact that $\eta_t \leq 1/L$, respectively.

Now consider the term $\displaystyle \eta_t \bigg( \frac{4L^2  }{K} - c \bigg)$ in \eqref{Eq: Coefficient_e_t}, since we have $\displaystyle c = \frac{28L^2}{K} + \frac{2^{2/3}\sigma^2}{3 L\bar{\kappa}^3}$ we get:
\begin{align}
    \eta_t \bigg( \frac{4L^2  }{K} - c \bigg) = \eta_t \bigg( - \frac{24L^2}{K} - \frac{2^{2/3}\sigma^2}{3 L\bar{\kappa}^3} \bigg)
    \label{Eq: Coefficient_e_t_2}
\end{align}
Substituting \eqref{Eq: Coefficient_e_t_1} and \eqref{Eq: Coefficient_e_t_2} in \eqref{Eq: Coefficient_e_t}, we get:
\begin{align}
    \eta_t^{-1} (1 - a_{t+1}) \bigg(1 + \frac{4 L^2 \eta_{t}^2}{K} \bigg) -  \eta_{t-1}^{-1}  \leq - \frac{24 L^2}{K} \eta_t.
    \label{Eq: Eq: Coefficient_e_t_Bound}
\end{align}
Substituting \eqref{Eq: Eq: Coefficient_e_t_Bound} in \eqref{eq: Error_Potential}, we get:
\begin{align*}
     \frac{\mathbb{E}\|\bar{e}_{t+1} \|^2}{\eta_t}-  \frac{\mathbb{E}\|\bar{e}_{t} \|^2}{\eta_{t-1}} & \leq - \frac{24 L^2 \eta_t}{K}  \mathbb{E}\|  \bar{e}_{t} \|^2  + \frac{4  L^2 \eta_{t} }{K} \mathbb{E}\|  \nabla f(\bar{x}_t) \|^2  + \frac{2c^2 \sigma^2 \eta_t^3 }{K} .
\end{align*}
Now summing over $t$ and multiplying by $\displaystyle \frac{K}{48L^2}$, we get:
\begin{align}
   \frac{K}{48L^2} \sum_{t=1}^T \bigg( \frac{\mathbb{E}\|\bar{e}_{t+1} \|^2}{\eta_t}-  \frac{\mathbb{E}\|\bar{e}_{t} \|^2}{\eta_{t-1}} \bigg) & \leq - \sum_{t=1}^T \frac{\eta_t}{2}  \mathbb{E}\|  \bar{e}_{t} \|^2  +  \sum_{t=1}^T \frac{\eta_{t} }{12} \mathbb{E}\|  \nabla f(\bar{x}_t) \|^2  + \sum_{t=1}^T \frac{c^2  \sigma^2 \eta_t^3}{24 L^2} .
   \label{eq: Half_Potential_Sum}
\end{align}
Finally, considering the last term of \eqref{eq: Half_Potential_Sum} above and using the definition of $\eta_t$ from Algorithm \ref{Algo_D-STORM} we have:
\begin{align}
  \sum_{t=1}^T \frac{c^2  \sigma^2 \eta_t^3 }{24 L^2} & = \sum_{t=1}^T \frac{c^2  \sigma^2 \bar{\kappa}^3}{24 L^2 (w_t + \sigma^2 t)} \nonumber\\
  & \overset{(a)}{\leq} \sum_{t=1}^T \frac{c^2  \sigma^2 \bar{\kappa}^3}{24 L^2 (\sigma^2 + \sigma^2 t)} \nonumber\\
  &= \sum_{t=1}^T \frac{c^2  \bar{\kappa}^3}{24 L^2 (1+  t)} \nonumber\\
  & \overset{(b)}{\leq} \frac{c^2 \bar{\kappa}^3 }{24L^2} \ln (T+1).
  \label{Eq: Coefficient_Variance_Sum}
\end{align}
where inequality $(a)$ uses the fact that $w_t \geq 2 \sigma^2 > \sigma^2$ and $(b)$ follows from Lemma \ref{Lem: AD_Sum_1overT}.

Substituting \eqref{Eq: Coefficient_Variance_Sum} in \eqref{eq: Half_Potential_Sum}, we get:
\begin{align}
   \frac{K}{48L^2} \sum_{t=1}^T \bigg( \frac{\mathbb{E}\|\bar{e}_{t+1} \|^2}{\eta_t}-  \frac{\mathbb{E}\|\bar{e}_{t} \|^2}{\eta_{t-1}} \bigg) & \leq - \sum_{t=1}^T \frac{\eta_t}{2}  \mathbb{E}\|  \bar{e}_{t} \|^2  + \sum_{t=1}^T  \frac{\eta_{t} }{12} \mathbb{E}\|  \nabla f(\bar{x}_t) \|^2  + \frac{c^2 \bar{\kappa}^3 }{24L^2} \ln (T+1) .
   \label{eq: Half_Potential_Final}
\end{align}
Adding \eqref{eq: D_Smoothness_Sum} and \eqref{eq: Half_Potential_Final} above and using the definition of potential function $\displaystyle \Phi_t = f(\bar{x}_t) + \frac{K}{48L^2 \eta_{t-1}} \|\bar{e}_t\|^2$ given in \eqref{Eq: PotentialFn}, we get:
\begin{align*}
    \mathbb{E}[\Phi_{T+1} - \Phi_1] & \leq - \sum_{t=1}^T  \frac{\eta_{t} }{2} \mathbb{E}\|  \nabla f(\bar{x}_t) \|^2 + \sum_{t=1}^T  \frac{\eta_{t} }{12} \mathbb{E}\|  \nabla f(\bar{x}_t) \|^2  + \frac{c^2 \bar{\kappa}^3 }{24L^2} \ln (T+1) \\
    & = - \sum_{t=1}^T  \frac{5 \eta_{t} }{12} \mathbb{E}\|  \nabla f(\bar{x}_t) \|^2  + \frac{c^2 \bar{\kappa}^3 }{24L^2} \ln (T+1).
\end{align*}
Rearranging the terms we get;
\begin{align*}
  \sum_{t=1}^T   \eta_{t}\mathbb{E}\|  \nabla f(\bar{x}_t) \|^2     & \leq \frac{12}{5} \mathbb{E}[\Phi_{1} - \Phi_{T+1}] + \frac{c^2 \bar{\kappa}^3 }{10 L^2} \ln (T+1) \\
  & \overset{(a)}{\leq} 3 \mathbb{E}[f(\bar{x}_1) - f(x^\ast)] + \frac{K}{20 L^2 \eta_0} \mathbb{E}\| \bar{e}_1\|^2+ \frac{c^2 \bar{\kappa}^3 }{10 L^2} \ln (T+1) \\
  & \overset{(b)}{\leq} 3 \mathbb{E}[f(\bar{x}_1) - f(x^\ast)] + \frac{w_0^{1/3}\sigma^2 }{20 L^2 \bar{\kappa}}  + \frac{c^2 \bar{\kappa}^3 }{10 L^2} \ln (T+1).
\end{align*}
where $(a)$ follows from the definition of $\Phi_1$ and the fact that $\Phi_{T+1} \geq f(x^\ast)$ and inequality $(b)$ uses the definition of $\eta_0$ and Lemma \ref{Lem: AD_e_bar_bound} to bound $\| \bar{e}_1 \|^2$.

Finally, note from the choice of $\displaystyle w_t = \max \bigg\{2 \sigma^2, \bar{\kappa}^3L^3 - \sigma^2t,  \frac{\bar{\kappa}^3 c^3}{L^3} \bigg\}$ in the statement $(iv)$ of Theorem \ref{Thm: Convergence_Main} and the definition of $\displaystyle \eta_t = \frac{\bar{\kappa}}{(w_t + \sigma^2 t )^{1/3}}$ that $\eta_t$ is non-increasing with $t$. Therefore, using the fact that $\eta_T \leq \eta_t$ for all $t \in [T]$ in above, we get
\begin{align*}
\eta_T    \sum_{t=1}^T   \mathbb{E}\|  \nabla f(\bar{x}_{t}) \|^2     & \leq  3    \mathbb{E}[f(\bar{x}_1) - f(x^\ast)] +   \frac{w_0^{1/3}\sigma^2 }{20 L^2 \bar{\kappa}}  +  \frac{c^2 \bar{\kappa}^3 }{10 L^2} \ln (T+1). 
\end{align*}
Substituting $\displaystyle \eta_T = \frac{\bar{\kappa}}{(w_T + \sigma^2 T)^{1/3}}$ in the above, we get
\begin{align*}
\frac{1}{T} \sum_{t=1}^T   \mathbb{E}\|  \nabla f(\bar{x}_{t}) \|^2  & \leq   \frac{3 (w_T + \sigma^2 T)^{1/3} \mathbb{E}[f(\bar{x}_1) - f(x^\ast)]}{\bar{\kappa} T} +   \frac{(w_T + \sigma^2 T)^{1/3} w_0^{1/3}\sigma^2 }{20 L^2 \bar{\kappa}^2 T}  +  \frac{(w_T + \sigma^2 T)^{1/3} c^2 \bar{\kappa}^2 }{10 L^2 T}  \ln (T+1).
\end{align*}
Using the identity $(x + y)^{1/3} \leq x^{1/3} + y^{1/3}$, we have:
\begin{align*}
    \frac{1}{T} \sum_{t=1}^T   \mathbb{E}\|  \nabla f(\bar{x}_{t}) \|^2  & \leq  \frac{1}{T} \bigg[ \frac{3 w_T^{1/3} \mathbb{E}[f(\bar{x}_1) - f(x^\ast)]}{\bar{\kappa}} + \frac{w_T^{1/3} w_0^{1/3} \sigma^2}{20L^2\bar{\kappa}^2} + \frac{w_T^{1/3} c^2 \bar{\kappa}^2}{10L^2} \ln(T+1) \bigg]     \\
    & \qquad \qquad \quad + \frac{1}{T^{2/3}} \bigg[ \frac{3  \sigma^{2/3}  \mathbb{E}[f(\bar{x}_1) - f(x^\ast)]}{\bar{\kappa}} +   \frac{\sigma^{8/3}  w_0^{1/3}}{20 L^2 \bar{\kappa}^2 }  +  \frac{\sigma^{2/3}  c^2 \bar{\kappa}^2 }{10 L^2 } \ln (T+1) \bigg].
\end{align*}
Hence, we have the proof. 
\end{proof}
Using Theorem \ref{Thm: Convergence_Main}, we can now compute the computation complexity (Definition \ref{Def: ComputationComplexity}) of the algorithm. 
\begin{cor}
\label{Cor: Computation_Complexity}
 For $\displaystyle \alpha = \frac{2}{3}$ and the rest of the parameters chosen according to Theorem \ref{Thm: Convergence_Main}.
 \begin{enumerate}[(i)]
    \item  For $K^{1 - \alpha} \geq 56b/2^{1/3}$, we have:
\begin{align*}
      \mathbb{E}\|  \nabla f(x_a) \|^2  & \leq  O \bigg(\frac{1 + \sigma^{4/3} + \sigma^2 \ln(T+1)}{K^{2/3} T}\bigg) + O \bigg(\frac{1 + \sigma^{4/3} + \sigma^2 \ln(T+1)}{K^{2/3} T^{2/3}}\bigg).
\end{align*}
and for $K^{1 - \alpha} \leq 56b/2^{1/3}$, we have
\begin{align*}
     \mathbb{E}\|  \nabla f({x}_{a}) \|^2  & \leq  O \bigg(\frac{1 + \sigma^{4/3} + \sigma^2 \ln(T+1)}{K T}\bigg) + O \bigg(\frac{1 + \sigma^{4/3} + \sigma^2 \ln(T+1)}{K^{2/3} T^{2/3}}\bigg).
\end{align*}
\item To reach an $\epsilon$-stationary solution (please see Definition \ref{Def: StationaryPt}) we need $\tilde{O}(K^{-1} \epsilon^{-3})$, gradient computations at each node, thereby, achieving linear speedup with the number of WNs $K$ in the network.
\end{enumerate}
\end{cor}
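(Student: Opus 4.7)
The plan is to mirror the argument used for Corollary \ref{Cor: DA_Computation_Complexity}, since the bound furnished by Theorem \ref{Thm: Convergence_Main} has exactly the same structure as the one for AD-STORM with $G^2$ replaced by $\sigma^2$ and all quantities now deterministic in the step sizes. The starting point is the main bound of Theorem \ref{Thm: Convergence_Main}, which has the form $\frac{1}{T}\sum_t \mathbb{E}\|\nabla f(\bar{x}_t)\|^2 \le \frac{1}{T}[\text{terms with } w_T^{1/3}] + \frac{1}{T^{2/3}}[\text{terms with }\sigma^{2/3}]$. Since $x_a$ is drawn uniformly from $\{x_t\}_{t=1}^T$, the left-hand side is exactly $\mathbb{E}\|\nabla f(x_a)\|^2$.

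First, I would pin down the worst-case size of $w_0$ and the eventual size of $w_T$ using statement $(iv)$ of Theorem \ref{Thm: Convergence_Main}, namely $w_t \le \sigma^2 \max\{2,\, b^3 K^{3\alpha} - t,\, (56b)^3 / K^{3-3\alpha}\}$. For $t=0$ the bound $w_0 = O(b^3 K^{3\alpha}\sigma^2)$ is attained, while for $t \ge b^3 K^{3\alpha}$ the middle term drops out and $w_T$ is governed by the maximum of the remaining two constants. This splits the analysis into the two regimes: Regime 1 ($K^{1-\alpha} \ge 56b/2^{1/3}$), where $w_T = O(\sigma^2)$, and Regime 2 ($K^{1-\alpha} \le 56b/2^{1/3}$), where $w_T = O(\sigma^2/K^{3-3\alpha})$.

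Next, I would substitute $\bar{\kappa} = bK^\alpha\sigma^{2/3}/L$ and $c \le 56L^2/K$ into each of the three constants appearing in the bracketed factors of Theorem \ref{Thm: Convergence_Main}. The $1/\bar{\kappa}$ term scales as $K^{-\alpha}$, the $w_0^{1/3}\sigma^2/(L^2\bar{\kappa}^2)$ term scales as $K^{\alpha-2\alpha} = K^{-\alpha}$, and the $c^2\bar{\kappa}^2 \ln(T+1)/L^2$ term scales as $K^{2\alpha - 2}\ln(T+1)$. Multiplying by the corresponding $w_T^{1/3}$ (constant or $K^{-(1-\alpha)}$) and collecting, the dominant $1/T^{2/3}$ pieces are of order $(1 + \sigma^{4/3} + \sigma^2\ln(T+1))/(K^{\alpha} T^{2/3})$ in Regime 1 and the same in Regime 2 up to the overlap of the logarithmic term becoming $K^{-2(1-\alpha)}$; similarly the $1/T$ pieces reduce to the form stated in the corollary. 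Plugging in $\alpha = 2/3$ then yields the advertised $K^{-2/3} T^{-2/3}$ and $K^{-1}T^{-1}$ (respectively $K^{-1}T^{-1}$) scalings in the two regimes.

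Finally, the complexity claim follows by inverting the dominant rate. The leading term for large $T$ is the $1/T^{2/3}$ contribution, so $\mathbb{E}\|\nabla f(x_a)\|^2 \lesssim \tilde{O}(K^{-2/3}T^{-2/3})$, which gives $\mathbb{E}\|\nabla f(x_a)\| \le \epsilon$ as soon as $T \ge \tilde{O}(K^{-1}\epsilon^{-3})$. Since each iteration consumes $O(1)$ IFO calls at every WN, this is also the per-worker gradient complexity, establishing the linear speed-up. I do not expect any serious obstacle here; the only real bookkeeping concern is making sure that the three bracketed terms in Theorem \ref{Thm: Convergence_Main}, after being multiplied by $w_T^{1/3}$ and expanded using $(x+y)^{1/3} \le x^{1/3}+y^{1/3}$, give an upper envelope where the $\ln(T+1)$ factor cleanly dominates the other $T^{-2/3}$ terms, so that the $\tilde{O}(\cdot)$ hides only logarithmic factors --- otherwise the proof is a direct transcription of the AD-STORM argument with $G^2 \leftrightarrow \sigma^2$.
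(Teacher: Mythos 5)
Your proposal is correct and follows essentially the same route as the paper's proof: identifying $\mathbb{E}\|\nabla f(x_a)\|^2$ with the time-averaged squared gradient norm, bounding $w_0 = O(b^3K^{3\alpha})$ and $w_T$ via the two regimes $K^{1-\alpha}\gtrless 56b/2^{1/3}$, substituting $\bar{\kappa}$ and $c$, setting $\alpha = 2/3$, and finally passing through Jensen's inequality to convert the $K^{-2/3}T^{-2/3}$ rate on the squared norm into the $\tilde{O}(K^{-1}\epsilon^{-3})$ iteration complexity. The bookkeeping you flag (the $\sigma^{4/3}$ and $\sigma^2\ln(T+1)$ coefficients) works out exactly as in the paper.
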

\begin{proof}
We know from the statement of Theorem \ref{Thm: Convergence_Main} that $\displaystyle w_t \leq  \sigma^2 \max\bigg\{2 ,~ b^3 K^{3 \alpha} - t,  ~\frac{(56b)^3}{K^{3 - 3\alpha}} \bigg\}$. This implies that, for $t = 0$, in the worst case we will have $w_0 = O(b^3 K^{3\alpha})$. Note that here the worst case refers to the worst case speedup achievable in terms of the number of WNs, $K$, present in the network. Moreover, after a finite number of iterations, specifically, $T \geq b^3 K^{3\alpha}$, we will have $\displaystyle w_T \leq \sigma^2 \max\bigg\{2 , ~\frac{(56b)^3}{K^{3 - 3\alpha}} \bigg\}$. Now we consider two regimes as: \vspace{0.1 in}\\
{\bf Regime 1:} When we have $K^{1 - \alpha} \geq 56b/2^{1/3}$. \vspace{0.05 in}\\
This means $\displaystyle 2 \geq {(56b)^3}/{K^{3 - 3\alpha}}$ which further implies that we have $w_T = O(2 \sigma^2)$. So under Regime 1, using $w_T = 2 \sigma^2$ and $w_0 = b^3 K^{3\alpha}$ along with $\displaystyle \bar{\kappa} = \frac{b K^\alpha \sigma^{2/3}}{L}$ and $\displaystyle c \leq \frac{56L^2}{K}$ as given in the statement of Theorem \ref{Thm: Convergence_Main} in the result of Theorem \ref{Thm: Convergence_Main} we get:
    \begin{align*}
    \frac{1}{T} \sum_{t=1}^T   \mathbb{E}\|  \nabla f(\bar{x}_{t}) \|^2  & \leq  O \bigg(\frac{1}{K^\alpha T} + \frac{\sigma^{4/3}}{K^{\alpha} T} + \frac{\sigma^{2}\ln(T+1)}{K^{2 - 2\alpha} T} \bigg) + O \bigg(\frac{1}{K^\alpha T^{2/3}} + \frac{\sigma^{4/3}}{K^\alpha T^{2/3}} + \frac{\sigma^2\ln(T+1)}{K^{2 - 2\alpha} T^{2/3}} \bigg).
\end{align*}
Choosing $\displaystyle \alpha = \frac{2}{3}$ we get:
\begin{align*}
     \frac{1}{T} \sum_{t=1}^T   \mathbb{E}\|  \nabla f(\bar{x}_{t}) \|^2  & \leq  O \bigg(\frac{1 + \sigma^{4/3} + \sigma^2 \ln(T+1)}{K^{2/3} T}\bigg) + O \bigg(\frac{1 + \sigma^{4/3} + \sigma^2 \ln(T+1)}{K^{2/3} T^{2/3}}\bigg).
\end{align*}
Therefore, we have $(i)$ under Regime 1. Moreover, to achieve $\epsilon$-stationary solution we need:
\begin{align*}
 O \bigg(\frac{1 + \sigma^{4/3} + \sigma^2 \ln(T+1)}{K^{2/3} T^{2/3}} \bigg) \leq \epsilon \qquad \Rightarrow \qquad T \geq \tilde{O}(K^{-1} \epsilon^{-3/2}).
\end{align*}
Now, we consider Regime 2 as:\vspace{0.1 in}\\
{\bf Regime 2:} When we have $K^{1 - \alpha} \leq 56b/2^{1/3}$. \vspace{0.05 in}\\
This means $\displaystyle 2 \leq {(56b)^3}/{K^{3 - 3\alpha}}$ which further implies that we have $w_T = O(\sigma^2/ K^{3-3\alpha})$.  

So under Regime 2, using $w_T =  \sigma^2/ K^{3-3\alpha}$ along with $\displaystyle \bar{\kappa} = \frac{b K^\alpha \sigma^{2/3}}{L}$ and $\displaystyle c \leq \frac{56L^2}{K}$ as given in the statement of Theorem \ref{Thm: Convergence_Main} in the result of Theorem \ref{Thm: Convergence_Main} we get:
    \begin{align*}
  \frac{1}{T} \sum_{t=1}^T   \mathbb{E}\|  \nabla f(\bar{x}_{t}) \|^2  & \leq  O \bigg(\frac{1}{K  T} + \frac{\sigma^{4/3}}{K T} + \frac{\sigma^{2}\ln(T+1)}{K^{3 - 3\alpha} T} \bigg) + O \bigg(\frac{1}{K^\alpha T^{2/3}} + \frac{\sigma^{4/3}}{K^\alpha T^{2/3}} + \frac{\sigma^2\ln(T+1)}{K^{2 - 2\alpha} T^{2/3}} \bigg).
\end{align*}
Again, choosing $\displaystyle \alpha = \frac{2}{3}$ we get:
\begin{align*}
  \mathbb{E}\|  \nabla f(\bar{x}_a) \|^2  =   \frac{1}{T} \sum_{t=1}^T   \mathbb{E}\|  \nabla f(\bar{x}_{t}) \|^2  & \leq  O \bigg(\frac{1 + \sigma^{4/3} + \sigma^2 \ln(T+1)}{K T}\bigg) + O \bigg(\frac{1 + \sigma^{4/3} + \sigma^2 \ln(T+1)}{K^{2/3} T^{2/3}}\bigg).
\end{align*}
Therefore, we have $(i)$ under Regime 2. Finally, using Jensen's inequality for the norm, i.e., $\big(\mathbb{E} \|\nabla f(x_a) \| \big)^2 \leq \mathbb{E} \|\nabla f(x_a) \|^2$, we get
\begin{align*}
        \mathbb{E}\|  \nabla f(\bar{x}_a) \|  & \leq  O \bigg(\frac{1 + \sigma^{2/3} + \sigma \sqrt{\ln(T+1)}}{K^{1/2} T^{1/2}}\bigg) + O \bigg(\frac{1 + \sigma^{2/3} + \sigma \sqrt{\ln(T+1)}}{K^{1/3} T^{1/3}}\bigg).
\end{align*}
Moreover, to achieve $\epsilon$-stationary solution we need
\begin{align*}
 O \bigg(\frac{1 + \sigma^{2/3} + \sigma \sqrt{ \ln(T+1)}}{K^{1/3} T^{1/3}} \bigg) \leq \epsilon \qquad \Rightarrow \qquad T \geq \tilde{O}(K^{-1} \epsilon^{-3}).
\end{align*}
Therefore, we have the corollary.
\end{proof}
\begin{rem}
Corollary \ref{Cor: Computation_Complexity} again implies that the total number of gradient evaluations at each WN in the worst case is reduced by a factor of $K$ for D-STORM when compared to the centralized version of the algorithm \cite{Cutkosky_NIPS2019}. This again implies that D-STORM is also capable of achieving linear speedup with the number of WNs, $K$, while at the same time achieving optimal computational complexity compared to the state-of-the-art up to logarithmic factors \cite{Arjevani_Carmon_2019_LowerBounds}. 
\end{rem}
\section{Conclusion}
\label{sec: Conclusion}
In this work, we proposed two distributed algorithms for stochastic non-convex optimization. The proposed algorithms AD-STORM and D-STORM are non-trivial extensions of the STORM algorithm proposed in \cite{Cutkosky_NIPS2019}. In contrast to the existing approaches, the proposed algorithms utilize momentum based construction of descent direction and execute in a ``single loop" which eliminates the need of computing large batch sizes to achieve variance reduction. Moreover, the ``adaptive" version of the algorithm utilizes the current gradient information across all WNs to design adaptive step-sizes. Importantly, we showed that the proposed algorithms achieve optimal computational complexity while attaining linear speedup with the number of WNs. Moreover, our approach did not assume identical data distributions across WNs making the approach general enough for federated learning applications. The future extensions of the proposed work include developing restarted versions of AD-STORM and D-STORM to improve the communication complexity of the proposed algorithms \cite{Yu_Zhu_2018parallel, Wang_Joshi_Arxiv_2018cooperative, Yu_Jin_Arxiv_2019linear}. Moreover, the extension of the proposed algorithms to decentralized (server less) architectures is also desirable.

\bibliographystyle{IEEEtran}
\bibliography{abrv,References}

% Generated by IEEEtran.bst, version: 1.14 (2015/08/26)
\begin{thebibliography}{10}
\providecommand{\url}[1]{#1}
\csname url@samestyle\endcsname
\providecommand{\newblock}{\relax}
\providecommand{\bibinfo}[2]{#2}
\providecommand{\BIBentrySTDinterwordspacing}{\spaceskip=0pt\relax}
\providecommand{\BIBentryALTinterwordstretchfactor}{4}
\providecommand{\BIBentryALTinterwordspacing}{\spaceskip=\fontdimen2\font plus
\BIBentryALTinterwordstretchfactor\fontdimen3\font minus
  \fontdimen4\font\relax}
\providecommand{\BIBforeignlanguage}[2]{{%
\expandafter\ifx\csname l@#1\endcsname\relax
\typeout{** WARNING: IEEEtran.bst: No hyphenation pattern has been}%
\typeout{** loaded for the language `#1'. Using the pattern for}%
\typeout{** the default language instead.}%
\else
\language=\csname l@#1\endcsname
\fi
#2}}
\providecommand{\BIBdecl}{\relax}
\BIBdecl

\bibitem{Chang_Hong_Arxiv_2020}
T.-H. Chang, M.~Hong, H.-T. Wai, X.~Zhang, and S.~Lu, ``Distributed learning in
  the non-convex world: From batch to streaming data, and beyond,'' \emph{arXiv
  preprint arXiv:2001.04786}, 2020.

\bibitem{Bottou_SIAM_2018_Review}
L.~Bottou, F.~E. Curtis, and J.~Nocedal, ``Optimization methods for large-scale
  machine learning,'' \emph{SIAM Review}, vol.~60, no.~2, pp. 223--311, 2018.

\bibitem{goodfellow16book}
I.~Goodfellow, Y.~Bengio, and A.~Courville, \emph{Deep Learning}.\hskip 1em
  plus 0.5em minus 0.4em\relax MIT press, 2016.

\bibitem{chi19tsp}
Y.~Chi, Y.~M. Lu, and Y.~Chen, ``Nonconvex optimization meets low-rank matrix
  factorization: An overview,'' \emph{IEEE Transactions on Signal Processing},
  vol.~67, no.~20, pp. 5239--5269, 2019.

\bibitem{netrapalli14nips}
P.~Netrapalli, U.~Niranjan, S.~Sanghavi, A.~Anandkumar, and P.~Jain,
  ``Non-convex robust pca,'' in \emph{Advances in Neural Information Processing
  Systems}, 2014, pp. 1107--1115.

\bibitem{kang15icdm}
Z.~Kang, C.~Peng, and Q.~Cheng, ``Robust pca via nonconvex rank
  approximation,'' in \emph{2015 IEEE International Conference on Data
  Mining}.\hskip 1em plus 0.5em minus 0.4em\relax IEEE, 2015, pp. 211--220.

\bibitem{ge15colt}
R.~Ge, F.~Huang, C.~Jin, and Y.~Yuan, ``Escaping from saddle points — online
  stochastic gradient for tensor decomposition,'' in \emph{Conference on
  Learning Theory}, 2015, pp. 797--842.

\bibitem{xu17nips}
P.~Xu, J.~Ma, and Q.~Gu, ``Speeding up latent variable gaussian graphical model
  estimation via nonconvex optimization,'' in \emph{Advances in Neural
  Information Processing Systems}, 2017, pp. 1933--1944.

\bibitem{Xing_Engineering_2016}
E.~P. Xing, Q.~Ho, P.~Xie, and D.~Wei, ``Strategies and principles of
  distributed machine learning on big data,'' \emph{Engineering}, vol.~2,
  no.~2, pp. 179--195, 2016.

\bibitem{Leaute_JAIR_2013}
T.~L{\'e}aut{\'e} and B.~Faltings, ``Protecting privacy through distributed
  computation in multi-agent decision making,'' \emph{Journal of Artificial
  Intelligence Research}, vol.~47, pp. 649--695, 2013.

\bibitem{Konevcny_Arxiv_2016}
J.~Kone{\v{c}}n{\`y}, H.~B. McMahan, D.~Ramage, and P.~Richt{\'a}rik,
  ``Federated optimization: Distributed machine learning for on-device
  intelligence,'' \emph{arXiv preprint arXiv:1610.02527}, 2016.

\bibitem{Ghadimi_Siam_2013_SGD}
S.~Ghadimi and G.~Lan, ``Stochastic first-and zeroth-order methods for
  nonconvex stochastic programming,'' \emph{SIAM Journal on Optimization},
  vol.~23, no.~4, pp. 2341--2368, 2013.

\bibitem{Ward_Arxiv_2019_adagrad}
R.~Ward, X.~Wu, and L.~Bottou, ``Adagrad stepsizes: Sharp convergence over
  nonconvex landscapes, from any initialization,'' \emph{arXiv preprint
  arXiv:1806.01811}, 2018.

\bibitem{Duchi_JMLR_2011_adaptive}
J.~Duchi, E.~Hazan, and Y.~Singer, ``Adaptive subgradient methods for online
  learning and stochastic optimization,'' \emph{Journal of machine learning
  research}, vol.~12, no. Jul, pp. 2121--2159, 2011.

\bibitem{Kingma_Arxiv_2014_adam}
D.~P. Kingma and J.~Ba, ``Adam: A method for stochastic optimization,''
  \emph{arXiv preprint arXiv:1412.6980}, 2014.

\bibitem{Tieleman_Hinton_Lecture_2012RMSProp}
T.~Tieleman and G.~Hinton, ``Lecture 6.5-rmsprop: Divide the gradient by a
  running average of its recent magnitude,'' \emph{COURSERA: Neural networks
  for machine learning}, vol.~4, no.~2, pp. 26--31, 2012.

\bibitem{Cutkosky_NIPS2019}
A.~Cutkosky and F.~Orabona, ``Momentum-based variance reduction in non-convex
  {SGD},'' in \emph{Advances in Neural Information Processing Systems
  32}.\hskip 1em plus 0.5em minus 0.4em\relax Curran Associates, Inc., 2019,
  pp. 15\,236--15\,245.

\bibitem{Dinh_Arxiv_2019}
Q.~Tran-Dinh, N.~H. Pham, D.~T. Phan, and L.~M. Nguyen, ``Hybrid stochastic
  gradient descent algorithms for stochastic nonconvex optimization,''
  \emph{arXiv preprint arXiv:1905.05920}, 2019.

\bibitem{Johnson_NIPS_2013}
R.~Johnson and T.~Zhang, ``Accelerating stochastic gradient descent using
  predictive variance reduction,'' in \emph{Advances in Neural Information
  Processing Systems 26}.\hskip 1em plus 0.5em minus 0.4em\relax Curran
  Associates, Inc., 2013, pp. 315--323.

\bibitem{Reddi_ICML_2016}
S.~J. Reddi, A.~Hefny, S.~Sra, B.~P{\'o}czos, and A.~Smola, ``Stochastic
  variance reduction for nonconvex optimization,'' in \emph{International
  conference on machine learning}, 2016, pp. 314--323.

\bibitem{Fang_NIPS_2018_spider}
C.~Fang, C.~J. Li, Z.~Lin, and T.~Zhang, ``{S}pider: Near-optimal non-convex
  optimization via stochastic path-integrated differential estimator,'' in
  \emph{Advances in Neural Information Processing Systems}, 2018, pp. 689--699.

\bibitem{Zhu_Hazan_ICML_2016}
Z.~Allen-Zhu and E.~Hazan, ``Variance reduction for faster non-convex
  optimization,'' in \emph{International conference on machine learning}, 2016,
  pp. 699--707.

\bibitem{Lei_Jordan_SCSG}
L.~Lei, C.~Ju, J.~Chen, and M.~I. Jordan, ``Non-convex finite-sum optimization
  via scsg methods,'' in \emph{Advances in Neural Information Processing
  Systems}, 2017, pp. 2348--2358.

\bibitem{Wang_NIPS_2019_SpiderBoost}
Z.~Wang, K.~Ji, Y.~Zhou, Y.~Liang, and V.~Tarokh, ``{S}pider{B}oost and
  momentum: Faster variance reduction algorithms,'' in \emph{Advances in Neural
  Information Processing Systems}, 2019, pp. 2403--2413.

\bibitem{Zhou_NIPS_2018}
D.~Zhou, P.~Xu, and Q.~Gu, ``Stochastic nested variance reduced gradient
  descent for nonconvex optimization,'' in \emph{Advances in Neural Information
  Processing Systems}, 2018, pp. 3921--3932.

\bibitem{Arjevani_Carmon_2019_LowerBounds}
Y.~Arjevani, Y.~Carmon, J.~C. Duchi, D.~J. Foster, N.~Srebro, and B.~Woodworth,
  ``Lower bounds for non-convex stochastic optimization,'' \emph{arXiv preprint
  arXiv:1912.02365}, 2019.

\bibitem{Li_Orabona_Arxiv_2018_convergence}
X.~Li and F.~Orabona, ``On the convergence of stochastic gradient descent with
  adaptive stepsizes,'' \emph{arXiv preprint arXiv:1805.08114}, 2018.

\bibitem{Reddi_Arxiv_2019_Adam}
S.~J. Reddi, S.~Kale, and S.~Kumar, ``On the convergence of adam and beyond,''
  \emph{arXiv preprint arXiv:1904.09237}, 2019.

\bibitem{Li_Smola_NIPS_2014_CommunicationEfficient}
M.~Li, D.~G. Andersen, A.~J. Smola, and K.~Yu, ``Communication efficient
  distributed machine learning with the parameter server,'' in \emph{Advances
  in Neural Information Processing Systems 27}, Z.~Ghahramani, M.~Welling,
  C.~Cortes, N.~D. Lawrence, and K.~Q. Weinberger, Eds.\hskip 1em plus 0.5em
  minus 0.4em\relax Curran Associates, Inc., 2014, pp. 19--27.

\bibitem{Dean_NIPS_2012large}
J.~Dean, G.~Corrado, R.~Monga, K.~Chen, M.~Devin, M.~Mao, M.~Ranzato,
  A.~Senior, P.~Tucker, K.~Yang \emph{et~al.}, ``Large scale distributed deep
  networks,'' in \emph{Advances in neural information processing systems},
  2012, pp. 1223--1231.

\bibitem{Yu_Zhu_2018parallel}
H.~Yu, S.~Yang, and S.~Zhu, ``Parallel restarted sgd with faster convergence
  and less communication: Demystifying why model averaging works for deep
  learning,'' 2018.

\bibitem{Wang_Joshi_Arxiv_2018cooperative}
J.~Wang and G.~Joshi, ``Cooperative sgd: A unified framework for the design and
  analysis of communication-efficient sgd algorithms,'' 2018.

\bibitem{Yu_Jin_Arxiv_2019linear}
H.~Yu, R.~Jin, and S.~Yang, ``On the linear speedup analysis of communication
  efficient momentum sgd for distributed non-convex optimization,'' 2019.

\bibitem{Sharma_Arxiv_2019}
P.~Sharma, P.~Khanduri, S.~Bulusu, K.~Rajawat, and P.~K. Varshney, ``Parallel
  restarted {SPIDER} -- communication efficient distributed nonconvex
  optimization with optimal computation complexity,'' \emph{arXiv preprint
  arXiv:1912.06036}, 2019.

\bibitem{Khanduri_Arxiv_2019}
P.~Khanduri, S.~Bulusu, P.~Sharma, and P.~K. Varshney, ``Byzantine resilient
  non-convex svrg with distributed batch gradient computations,'' \emph{arXiv
  preprint arXiv:1912.04531}, 2019.

\bibitem{Nguyen_ICML_2017_SARAH}
L.~M. Nguyen, J.~Liu, K.~Scheinberg, and M.~Tak{\'a}{\v{c}}, ``Sarah: A novel
  method for machine learning problems using stochastic recursive gradient,''
  in \emph{Proceedings of the 34th International Conference on Machine
  Learning-Volume 70}.\hskip 1em plus 0.5em minus 0.4em\relax JMLR. org, 2017,
  pp. 2613--2621.

\end{thebibliography}
\pagebreak 
\appendices
\section{AD-STORM}
\begin{lemma}
\label{Lem: AD_InnerProduct_e_t_Grad}
For $\bar{e}_{t-1} = \bar{d}_{t-1} - \nabla f(\bar{x}_{t-1})$ we have
\begin{align*}
\mathbb{E} \Bigg[ \frac{1}{\eta_{t-1}}\left\langle  (1 - a_t) \bar{e}_{t-1}, \frac{1}{K}\sum_{k = 1}^K \left[\left( \nabla f^{(k)}(\bar{x}_t;\xi_t^{(k)})  -  \nabla f^{(k)}(\bar{x}_t) \right)    - (1 - a_t) \left( \nabla f^{(k)}(\bar{x}_{t - 1}; \xi_t^{(k)}) - \nabla f^{(k)}(\bar{x}_{t - 1})\right) \right]   \right\rangle \Bigg] = 0
\end{align*}
\end{lemma}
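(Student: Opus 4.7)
The plan is to argue by conditioning on the natural filtration of the algorithm up to iteration $t-1$, which I will denote $\mathcal{F}_{t-1}$, containing all the randomness $\{\xi_s^{(k)}\}_{s\leq t-1, k \in [K]}$ used to produce the iterates and descent directions through time $t-1$. The key observation is that everything on the ``outside'' of the inner product, together with the deterministic evaluation points inside, is $\mathcal{F}_{t-1}$-measurable, while the fresh randomness $\{\xi_t^{(k)}\}_{k \in [K]}$ appears only linearly in the bracket on the right.

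Concretely, first I would note that $\bar{d}_{t-1}$ and $\bar{x}_{t-1}$, and hence $\bar{e}_{t-1} = \bar{d}_{t-1} - \nabla f(\bar{x}_{t-1})$, are $\mathcal{F}_{t-1}$-measurable. Similarly, since Steps 7, 8 and 9 of Algorithm \ref{Algo_AD-STORM} determine $\eta_{t-1}$, $a_t$ and $\bar{x}_t = \bar{x}_{t-1} - \eta_{t-1}\bar{d}_{t-1}$ entirely from quantities available at the end of iteration $t-1$, these are also $\mathcal{F}_{t-1}$-measurable. Therefore all factors in the inner product other than the bracket containing $\xi_t^{(k)}$ can be pulled outside a conditional expectation given $\mathcal{F}_{t-1}$.

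Second, I would invoke the unbiasedness assumption (Assumption \ref{Ass: Unbiased_Var_Grad}, part 1): since $\bar{x}_t$ and $\bar{x}_{t-1}$ are $\mathcal{F}_{t-1}$-measurable and $\xi_t^{(k)}\sim\mathcal{D}^{(k)}$ is drawn independently of $\mathcal{F}_{t-1}$,
\begin{align*}
\mathbb{E}\!\left[\nabla f^{(k)}(\bar{x}_t;\xi_t^{(k)})\,\middle|\,\mathcal{F}_{t-1}\right] &= \nabla f^{(k)}(\bar{x}_t),\\
\mathbb{E}\!\left[\nabla f^{(k)}(\bar{x}_{t-1};\xi_t^{(k)})\,\middle|\,\mathcal{F}_{t-1}\right] &= \nabla f^{(k)}(\bar{x}_{t-1}).
\end{align*}
Taking the conditional expectation inside the sum term-by-term then gives
\[
\mathbb{E}\!\left[\bigl(\nabla f^{(k)}(\bar{x}_t;\xi_t^{(k)}) - \nabla f^{(k)}(\bar{x}_t)\bigr) - (1 - a_t)\bigl(\nabla f^{(k)}(\bar{x}_{t - 1}; \xi_t^{(k)}) - \nabla f^{(k)}(\bar{x}_{t - 1})\bigr)\,\middle|\,\mathcal{F}_{t-1}\right] = 0,
\]
for every $k \in [K]$.

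Finally, I would conclude by the tower property: the conditional expectation of the inner product equals the inner product of the $\mathcal{F}_{t-1}$-measurable left factor with the zero vector above, hence vanishes, and taking an outer expectation yields the claim. The only subtle point — really the only thing to get right — is the measurability bookkeeping: confirming that $\bar{x}_t$ (which contains the update step) is already determined by $\mathcal{F}_{t-1}$ before the fresh samples $\xi_t^{(k)}$ are drawn in Step 11. Once the filtration is set up correctly, no nontrivial calculation remains.
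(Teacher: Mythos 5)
Your proposal is correct and follows essentially the same route as the paper: condition on the history through iteration $t-1$ (which fixes $\bar{e}_{t-1}$, $\eta_{t-1}$, $a_t$, and $\bar{x}_t$), apply the unbiasedness in Assumption \ref{Ass: Unbiased_Var_Grad} to conclude the bracketed term has zero conditional mean, and finish with the tower property. Your filtration bookkeeping is in fact slightly more explicit than the paper's informal ``given $\bar{x}_t$ and the past,'' but the argument is the same.
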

\begin{proof}
Given $\bar{x}_t$ and the past, $\bar{e}_{t-1}$ which is given as  $\bar{d}_{t-1} - \nabla f(\bar{x}_{t-1})$ and $a_t = c^2 \eta_{t-1}$ is fixed as $\eta_{t-1}$ is fixed. Therefore, we can write
\begin{align*}
& \mathbb{E} \Bigg[ \frac{1}{\eta_{t-1}} \left\langle  (1 - a_t) \bar{e}_{t-1}, \frac{1}{K}\sum_{k = 1}^K \left[\left( \nabla f^{(k)}(\bar{x}_t;\xi_t^{(k)})  -  \nabla f^{(k)}(\bar{x}_t) \right)    - (1 - a_t) \left( \nabla f^{(k)}(\bar{x}_{t - 1}; \xi_t^{(k)}) - \nabla f^{(k)}(\bar{x}_{t - 1})\right) \right]   \right\rangle \Bigg] \\
& = \mathbb{E} \Bigg[ \frac{1}{\eta_{t-1}} \bigg\langle  (1 - a_t) \bar{e}_{t-1}, \frac{1}{K}\sum_{k = 1}^K \mathbb{E}\Big[\big( \nabla f^{(k)}(\bar{x}_t;\xi_t^{(k)}) -  \nabla f^{(k)}(\bar{x}_t) \big) \\
& \qquad \qquad \qquad \qquad \qquad \qquad \qquad \qquad \qquad    - (1 - a_t) \left( \nabla f^{(k)}(\bar{x}_{t - 1}; \xi_t^{(k)}) - \nabla f^{(k)}(\bar{x}_{t - 1})\right) \bigg| \bar{x_t}~\text{and past} \Big]  \bigg\rangle \Bigg].
\end{align*}
Note from Assumption \ref{Ass: Unbiased_Var_Grad}, given $\bar{x}_t$ and the past we have: $\mathbb{E} [ \nabla f(\bar{x}_t ; \xi_t^{(k)})] =  \nabla f(\bar{x}_t)$ for all $k \in [K]$. This implies that we have: 
$$\mathbb{E} \left[ \left( \nabla f^{(k)}(\bar{x}_t;\xi_t^{(k)})  -  \nabla f^{(k)}(\bar{x}_t) \right)    - (1 - a_t) \left( \nabla f^{(k)}(\bar{x}_{t - 1}; \xi_t^{(k)}) - \nabla f^{(k)}(\bar{x}_{t - 1})\right) \big| \bar{x}_t~\text{and past} \right] = 0$$
for all $k \in [K]$.

Therefore, we have the result.
\end{proof}

\begin{lemma}
\label{Lem: AD_InnerProd_AcrossNodes}
For $k, l \in [K]$ and $k \neq l$, 
\begin{enumerate}[(i)]
    \item We have,
    \begin{align*}
&  \mathbb{E} \bigg[ \frac{1}{\eta_{t-1}} \bigg\langle   \left( \nabla f^{(k)}(\bar{x}_t;\xi_t^{(k)})  -  \nabla f^{(k)}(\bar{x}_t) \right)    - (1 - a_t) \left( \nabla f^{(k)}(\bar{x}_{t - 1}; \xi_t^{(k)}) - \nabla f^{(k)}(\bar{x}_{t - 1})\right),\\
   & \qquad \qquad \qquad  \qquad     \left( \nabla f^{(l)}(\bar{x}_t;\xi_t^{(l)})  -  \nabla f^{(l)}(\bar{x}_t) \right)    - (1 - a_t) \left( \nabla f^{(l)}(\bar{x}_{t - 1}; \xi_t^{(l)}) - \nabla f^{(l)}(\bar{x}_{t - 1})\right) \bigg\rangle \bigg] =0
\end{align*}
    \item and 
    \begin{align*}
&  \mathbb{E} \Big[ 2 c^2 \eta_{t-1}^3 \Big\langle    \nabla f^{(k)}(\bar{x}_t;\xi_t^{(k)})  -  \nabla f^{(k)}(\bar{x}_t) , \nabla f^{(l)}(\bar{x}_t;\xi_t^{(l)})  -  \nabla f^{(l)}(\bar{x}_t) \Big\rangle \Big] =0
\end{align*}
\end{enumerate}
\end{lemma}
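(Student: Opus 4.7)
The plan is to prove both parts by conditioning on the natural filtration up to (and including) the update producing $\bar x_t$, and then exploiting two facts: (a) by Assumption \ref{Ass: Unbiased_Var_Grad}(2), conditional on that filtration the samples $\xi_t^{(k)}$ and $\xi_t^{(l)}$ are drawn independently across $k\neq l$, and (b) each stochastic gradient is unbiased, i.e.\ $\mathbb{E}[\nabla f^{(k)}(x;\xi_t^{(k)})\mid \text{past}]=\nabla f^{(k)}(x)$ whenever $x$ is measurable with respect to the past.

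First, I would fix the $\sigma$-algebra $\mathcal{F}_{t-1}$ generated by $\{\xi_s^{(j)}:s\le t-1,j\in[K]\}$ together with the initialization. Under $\mathcal{F}_{t-1}$, the quantities $\bar x_t,\bar x_{t-1},\eta_{t-1},a_t=c\eta_{t-1}^2$, and the deterministic gradients $\nabla f^{(k)}(\bar x_t),\nabla f^{(k)}(\bar x_{t-1})$ are all measurable. Define, for each $k$,
\begin{equation*}
A_t^{(k)} := \bigl(\nabla f^{(k)}(\bar x_t;\xi_t^{(k)})-\nabla f^{(k)}(\bar x_t)\bigr)-(1-a_t)\bigl(\nabla f^{(k)}(\bar x_{t-1};\xi_t^{(k)})-\nabla f^{(k)}(\bar x_{t-1})\bigr).
\end{equation*}
By unbiasedness, $\mathbb{E}[A_t^{(k)}\mid \mathcal{F}_{t-1}]=0$ for every $k$. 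For $k\neq l$, $A_t^{(k)}$ is a function of $\xi_t^{(k)}$ only (given $\mathcal{F}_{t-1}$) and $A_t^{(l)}$ is a function of $\xi_t^{(l)}$ only, and the independence of $\xi_t^{(k)}$ and $\xi_t^{(l)}$ under $\mathcal{F}_{t-1}$ yields
\begin{equation*}
\mathbb{E}\bigl[\langle A_t^{(k)},A_t^{(l)}\rangle\,\big|\,\mathcal{F}_{t-1}\bigr]=\bigl\langle\mathbb{E}[A_t^{(k)}\mid\mathcal{F}_{t-1}],\,\mathbb{E}[A_t^{(l)}\mid\mathcal{F}_{t-1}]\bigr\rangle=0.
\end{equation*}
Multiplying by the $\mathcal{F}_{t-1}$-measurable scalar $1/\eta_{t-1}$ and taking outer expectation via the tower property gives part (i).

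Part (ii) is the same argument applied to the pair $B_t^{(k)}:=\nabla f^{(k)}(\bar x_t;\xi_t^{(k)})-\nabla f^{(k)}(\bar x_t)$, which is again a $\xi_t^{(k)}$-only function with zero conditional mean; the scalar factor $2c^2\eta_{t-1}^3$ is $\mathcal{F}_{t-1}$-measurable, so it pulls out and the conditional expectation of $\langle B_t^{(k)},B_t^{(l)}\rangle$ factorizes to zero.

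I expect no serious obstacle; the only thing that requires a little care is making explicit what is measurable with respect to $\mathcal{F}_{t-1}$ (in particular $\bar x_t$, which is computed from $\bar x_{t-1}$ and $\bar d_{t-1}$, both $\mathcal{F}_{t-1}$-measurable), so that the factorization of expectations from cross-worker independence is rigorous and the unbiasedness of each factor can be invoked.
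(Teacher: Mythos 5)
Your proof is correct and follows essentially the same route as the paper's: condition on the history so that $\bar{x}_t$, $\bar{x}_{t-1}$, $\eta_{t-1}$, and $a_t$ are fixed, use the cross-worker independence of $\xi_t^{(k)}$ and $\xi_t^{(l)}$ to factor the conditional expectation of the inner product, and invoke unbiasedness to make each factor vanish. Your version is slightly more careful than the paper's in making the filtration and the measurability of the scalar prefactors explicit, but the argument is the same.
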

\begin{proof}
Notice that given $\bar{x}_t$ and the past, $\eta_{t-1}$ is fixed and the samples $\xi_t^{(k)}$ and $\xi_t^{(l)}$ at the $k$th and the $l$th nodes are chosen uniformly randomly, and independent of each other for all $k,l \in [K]$ and $k \neq l$. Therefore, we have;
\begin{align*}
& \mathbb{E} \bigg[ \frac{1}{\eta_{t-1}} \bigg\langle   \left( \nabla f^{(k)}(\bar{x}_t;\xi_t^{(k)})  -  \nabla f^{(k)}(\bar{x}_t) \right)    - (1 - a_t) \left( \nabla f^{(k)}(\bar{x}_{t - 1}; \xi_t^{(k)}) - \nabla f^{(k)}(\bar{x}_{t - 1})\right),\\
   & \qquad \qquad \qquad  \qquad  \quad    \qquad \quad \left( \nabla f^{(l)}(\bar{x}_t;\xi_t^{(l)})  -  \nabla f^{(l)}(\bar{x}_t) \right)    - (1 - a_t) \left( \nabla f^{(l)}(\bar{x}_{t - 1}; \xi_t^{(l)}) - \nabla f^{(l)}(\bar{x}_{t - 1})\right) \bigg\rangle \bigg]\\
& = \mathbb{E} \bigg[ \frac{1}{\eta_{t-1}} \bigg\langle \mathbb{E} \left[ \left( \nabla f^{(k)}(\bar{x}_t;\xi_t^{(k)})  -  \nabla f^{(k)}(\bar{x}_t) \right)    - (1 - a_t) \left( \nabla f^{(k)}(\bar{x}_{t - 1}; \xi_t^{(k)}) - \nabla f^{(k)}(\bar{x}_{t - 1})\right) \bigg| \bar{x}_t~\text{and past}\right],\\
   & \qquad  \qquad    \qquad  \mathbb{E} \left[ \left( \nabla f^{(l)}(\bar{x}_t;\xi_t^{(l)})  -  \nabla f^{(l)}(\bar{x}_t) \right)    - (1 - a_t) \left( \nabla f^{(l)}(\bar{x}_{t - 1}; \xi_t^{(l)}) - \nabla f^{(l)}(\bar{x}_{t - 1})\right) \bigg| \bar{x}_t~\text{and past}\right] \bigg\rangle \bigg]
\end{align*}
Note from Assumption \ref{Ass: Unbiased_Var_Grad}, given $\bar{x}_t$ and the past $a_t = c \eta_{t-1}^2$ is fixed as $\eta_{t-1}$ is fixed, we have: $\mathbb{E} [ \nabla f(\bar{x}_t ; \xi_t^{(k)}) ] =  \nabla f(\bar{x}_t)$ for all $k \in [K]$. This implies that we have: 
$$\mathbb{E} \left[ \left( \nabla f^{(k)}(\bar{x}_t;\xi_t^{(k)})  -  \nabla f^{(k)}(\bar{x}_t) \right)    - (1 - a_t) \left( \nabla f^{(k)}(\bar{x}_{t - 1}; \xi_t^{(k)}) - \nabla f^{(k)}(\bar{x}_{t - 1})\right) \Big| \bar{x}_t~\text{and past} \right] = 0,$$
for all $k \in [K]$.

Therefore, we have the proof of $(i)$.

The proof of $(ii)$ uses the same argument as $(i)$: Given $\bar{x}_t$ and the past, $\eta_{t-1}$ is fixed and the samples $\xi_t^{(k)}$ and $\xi_t^{(l)}$ at the $k$th and the $l$th node are chosen uniformly randomly and independent from each other for $k,l \in [K]$ and $k \neq l$. Using the fact that $\mathbb{E}\big[\nabla f^{(k)}(\bar{x}_t ; \xi_t^{(k)}) \big| \bar{x}_t~\text{and past}\big] = \nabla f^{(k)}(\bar{x}_t)$ we get $(ii)$.
\end{proof}

\begin{lemma} For any $k \in [K]$, 
\label{Lem: AD_Mean_Variance_Bound}
\begin{enumerate}[(i)]
    \item We have
    \begin{align*}
      &  \mathbb{E} \bigg[ \frac{2 (1 - a_t)^2}{\eta_{t-1}K^2}    \big\| \big( \nabla f^{(k)}(\bar{x}_{t};\xi_t^{(k)})  -  \nabla f^{(k)}(\bar{x}_{t - 1}; \xi_t^{(k)}) \big)    -  \big(\nabla f^{(k)}(\bar{x}_{t}) - \nabla f^{(k)}(\bar{x}_{t - 1})\big) \big\|^2 \bigg] \\
        & \qquad \qquad \qquad \qquad \qquad \qquad \qquad \qquad \qquad \leq\mathbb{E} \bigg[\frac{2 (1 - a_t)^2}{\eta_{t-1}K^2}   \big\|   \nabla f^{(k)}(\bar{x}_t;\xi_t^{(k)})  -  \nabla f^{(k)}(\bar{x}_{t - 1}; \xi_t^{(k)}) \big\|^2  \bigg],
    \end{align*}
    \item and 
    \begin{align*}
       \mathbb{E}\bigg[ \frac{2 c^2 \eta_{t-1}^3}{K^2}  \big\|  \nabla f^{(k)}(\bar{x}_{t}; \xi_t^{(k)}) - \nabla f^{(k)}(\bar{x}_{t})   \big\|^2 \bigg] \leq \mathbb{E} \bigg[\frac{ 2 c^2 \eta_{t-1}^3}{K^2}  \big\| \nabla f^{(k)}(\bar{x}_t; \xi_t^{(k)})\big\|^2 \bigg].
    \end{align*} 
\end{enumerate}
\end{lemma}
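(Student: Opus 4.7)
The plan is to deduce both inequalities from the elementary fact that for any random vector $X$ with a well-defined conditional mean, $\mathbb{E}\|X - \mathbb{E}[X \mid \mathcal{F}]\|^2 \leq \mathbb{E}\|X\|^2$, which follows from expanding the square and observing that $\|\mathbb{E}[X\mid \mathcal{F}]\|^2 \geq 0$. So the whole lemma is really a conditional ``variance is bounded by second moment'' statement, dressed up with the deterministic scalar factors $\frac{2(1-a_t)^2}{\eta_{t-1}K^2}$ and $\frac{2c^2\eta_{t-1}^3}{K^2}$.

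First I would fix the $\sigma$-field $\mathcal{F}_{t-1}$ generated by $\bar{x}_t, \bar{x}_{t-1}$ and everything drawn before time $t$. Given $\mathcal{F}_{t-1}$, the step-size $\eta_{t-1}$ (determined by $\{\bar{G}_i^2\}_{i\le t-1}$) and the momentum parameter $a_t = c\eta_{t-1}^2$ are deterministic, so the coefficients commute with conditional expectation. The only randomness left is $\xi_t^{(k)}$, which by Assumption \ref{Ass: Unbiased_Var_Grad} is an unbiased sample, giving $\mathbb{E}[\nabla f^{(k)}(\bar{x}; \xi_t^{(k)})\mid \mathcal{F}_{t-1}] = \nabla f^{(k)}(\bar{x})$ for any $\mathcal{F}_{t-1}$-measurable point $\bar{x}$.

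For part (i), set $X := \nabla f^{(k)}(\bar{x}_t; \xi_t^{(k)}) - \nabla f^{(k)}(\bar{x}_{t-1}; \xi_t^{(k)})$; by linearity of the conditional expectation, $\mathbb{E}[X\mid \mathcal{F}_{t-1}] = \nabla f^{(k)}(\bar{x}_t) - \nabla f^{(k)}(\bar{x}_{t-1})$. The integrand on the left is exactly $\|X - \mathbb{E}[X\mid \mathcal{F}_{t-1}]\|^2$ scaled by a nonnegative $\mathcal{F}_{t-1}$-measurable constant, so the variance-bounded-by-second-moment inequality applied conditionally, followed by taking the outer expectation, yields the stated bound. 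Part (ii) is the same argument with $X := \nabla f^{(k)}(\bar{x}_t; \xi_t^{(k)})$ and $\mathbb{E}[X\mid \mathcal{F}_{t-1}] = \nabla f^{(k)}(\bar{x}_t)$, using the $\mathcal{F}_{t-1}$-measurable factor $\frac{2c^2 \eta_{t-1}^3}{K^2}$.

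There is no real obstacle here; the only point worth double-checking is the measurability of $\eta_{t-1}$ and $a_t$ with respect to $\mathcal{F}_{t-1}$, which is immediate because $\eta_{t-1}$ is constructed in Step 8 of Algorithm \ref{Algo_AD-STORM} from $\{\bar{G}_i^2\}_{i=1}^{t-1}$ and $a_t = c\eta_{t-1}^2$ is then deterministic given the past. Once that is observed, both inequalities are a single line of conditional calculation.
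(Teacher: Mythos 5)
Your proposal is correct and follows essentially the same route as the paper: the paper also conditions on $\bar{x}_t$ and the past (so that $\eta_{t-1}$ and $a_t=c\eta_{t-1}^2$ are fixed), expands the square, uses unbiasedness of $\nabla f^{(k)}(\cdot;\xi_t^{(k)})$ to identify the cross term with $\|\mathbb{E}[X\mid\mathcal{F}_{t-1}]\|^2$, and drops the resulting nonpositive term $-\|\mathbb{E}[X\mid\mathcal{F}_{t-1}]\|^2$ --- which is exactly the conditional ``variance is bounded by the second moment'' inequality you invoke. Your observation about the $\mathcal{F}_{t-1}$-measurability of the scalar coefficients is the same justification the paper gives, so there is nothing missing.
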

\begin{proof}
Considering individual terms in $(i)$, we have:
\begin{align*}
    & \mathbb{E} \bigg[ \frac{2 (1 - a_t)^2}{\eta_{t-1}K^2} \big\| \big( \nabla f^{(k)}(\bar{x}_{t};\xi_t^{(k)})  -  \nabla f^{(k)}(\bar{x}_{t - 1}; \xi_t^{(k)}) \big)    -  \big(\nabla f^{(k)}(\bar{x}_{t}) - \nabla f^{(k)}(\bar{x}_{t - 1})\big) \big\|^2 \bigg] \\
    & \qquad  = \mathbb{E} \bigg[ \frac{2 (1 - a_t)^2}{\eta_{t-1}K^2} \big\|\nabla f^{(k)}(\bar{x}_{t};\xi_t^{(k)})  -  \nabla f^{(k)}(\bar{x}_{t - 1}; \xi_t^{(k)}) \big\|^2 \bigg] + \mathbb{E} \bigg[ \frac{2 (1 - a_t)^2}{\eta_{t-1}K^2} \big\|  \nabla f^{(k)}(\bar{x}_{t}) - \nabla f^{(k)}(\bar{x}_{t - 1})  \big\|^2 \bigg] \\
    &\qquad \qquad \qquad \qquad - 2 \mathbb{E} \bigg[ \frac{2 (1 - a_t)^2}{\eta_{t-1}K^2} \big\langle  \nabla f^{(k)}(\bar{x}_{t};\xi_t^{(k)})  -  \nabla f^{(k)}(\bar{x}_{t - 1}; \xi_t^{(k)})  ,   \nabla f^{(k)}(\bar{x}_{t}) - \nabla f^{(k)}(\bar{x}_{t - 1})  \big\rangle \bigg] \\
    &  \qquad  \overset{(a)}{=} \mathbb{E} \bigg[\frac{2 (1 - a_t)^2}{\eta_{t-1}K^2} \big\|\nabla f^{(k)}(\bar{x}_{t};\xi_t^{(k)})  -  \nabla f^{(k)}(\bar{x}_{t - 1}; \xi_t^{(k)}) \big\|^2 \bigg] - \mathbb{E} \bigg[ \frac{2 (1 - a_t)^2}{\eta_{t-1}K^2} \big\|  \nabla f^{(k)}(\bar{x}_{t}) - \nabla f^{(k)}(\bar{x}_{t - 1})  \big\|^2 \bigg]\\
    & \qquad\leq \mathbb{E} \bigg[ \frac{2 (1 - a_t)^2}{\eta_{t-1}K^2} \big\|\nabla f^{(k)}(\bar{x}_{t};\xi_t^{(k)}) -  \nabla f^{(k)}(\bar{x}_{t - 1}; \xi_t^{(k)}) \big\|^2 \bigg].
\end{align*}
where $(a)$ follows from the fact that: Note from Assumption \ref{Ass: Unbiased_Var_Grad}, given $\bar{x}_t$ and the past, we have: $\mathbb{E} [ \nabla f(\bar{x}_t ; \xi_t^{(k)}) ] =  \nabla f(\bar{x}_t)$ for all $k \in [K]$, moreover, $a_t = c \eta_{t-1}^2$ and $\eta_{t-1}$ are fixed. This implies that we have: $\mathbb{E}\big[ \nabla f^{(k)}(\bar{x}_{t};\xi_t^{(k)})  -  \nabla f^{(k)}(\bar{x}_{t - 1}; \xi_t^{(k)}) \big| \bar{x}_t~\text{and past} \big] = \nabla f^{(k)}(\bar{x}_{t})  -  \nabla f^{(k)}(\bar{x}_{t - 1})$, using this we get
\begin{align*}
   & \mathbb{E} \bigg[ \frac{2 (1 - a_t)^2}{\eta_{t-1}K^2} \big\langle  \nabla f^{(k)}(\bar{x}_{t};\xi_t^{(k)})  -  \nabla f^{(k)}(\bar{x}_{t - 1}; \xi_t^{(k)})  ,   \nabla f^{(k)}(\bar{x}_{t}) - \nabla f^{(k)}(\bar{x}_{t - 1})  \big\rangle \bigg] \\
    & = \mathbb{E} \bigg[ \frac{2 (1 - a_t)^2}{\eta_{t-1}K^2} \big\langle \mathbb{E}\big[ \nabla f^{(k)}(\bar{x}_{t};\xi_t^{(k)})  -  \nabla f^{(k)}(\bar{x}_{t - 1}; \xi_t^{(k)}) \big| \bar{x}_t~\text{and past} \big] ,   \nabla f^{(k)}(\bar{x}_{t}) - \nabla f^{(k)}(\bar{x}_{t - 1})  \big\rangle \bigg]\\
    & = \mathbb{E} \bigg[ \frac{2 (1 - a_t)^2}{\eta_{t-1}K^2}   \| \nabla f^{(k)}(\bar{x}_{t}) - \nabla f^{(k)}(\bar{x}_{t - 1})  \|^2 \bigg]\
\end{align*}
Therefore, using this in above, we have the proof of $(i)$. The result of $(ii)$ follows from argument similar to $(i)$.
\end{proof}

\begin{lemma}
\label{Lem: AD_e_bar_bound}
For $\bar{e}_1$ chosen according to Algorithm \ref{Algo_AD-STORM}, we have:
\begin{align*}
    \mathbb{E}\| \bar{e}_1 \| \leq \frac{\sigma^2}{K}.
\end{align*}
\end{lemma}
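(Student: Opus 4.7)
The target is a bound on the initial estimation error $\bar{e}_1 = \bar{d}_1 - \nabla f(\bar{x}_1)$ produced by the initialization in Step 2 of Algorithm \ref{Algo_AD-STORM}. My plan is to express $\bar{e}_1$ as an average of independent zero-mean samples across the $K$ workers, compute its second moment directly, and then address the first-moment form that the lemma requests.

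First I would unfold the initialization. Since $\bar{d}_1 = \frac{1}{K}\sum_{k=1}^K \nabla f^{(k)}(x_1^{(k)};\xi_1^{(k)})$ with $x_1^{(k)} = \bar{x}_1$, and since $\nabla f(\bar{x}_1) = \frac{1}{K}\sum_{k=1}^K \nabla f^{(k)}(\bar{x}_1)$, I can write $\bar{e}_1 = \frac{1}{K}\sum_{k=1}^K Z_k$ where $Z_k := \nabla f^{(k)}(\bar{x}_1;\xi_1^{(k)}) - \nabla f^{(k)}(\bar{x}_1)$. By Assumption \ref{Ass: Unbiased_Var_Grad}, the $Z_k$ are mean zero, independent across $k$ (since each node draws $\xi_1^{(k)}$ independently), and satisfy $\mathbb{E}\|Z_k\|^2 \leq \sigma^2$. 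Expanding $\|\bar{e}_1\|^2$ and taking expectation, all cross terms $\mathbb{E}\langle Z_k, Z_\ell\rangle$ for $k \neq \ell$ vanish by the same factorization/unbiasedness argument used in Lemmas \ref{Lem: AD_InnerProduct_e_t_Grad} and \ref{Lem: AD_InnerProd_AcrossNodes}, leaving $\mathbb{E}\|\bar{e}_1\|^2 = \frac{1}{K^2}\sum_{k=1}^K \mathbb{E}\|Z_k\|^2 \leq \sigma^2/K$.

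The main obstacle is that the lemma as written bounds the expected (unsquared) norm $\mathbb{E}\|\bar{e}_1\|$ by $\sigma^2/K$, whereas the calculation above, when combined with Jensen's inequality, only delivers $\mathbb{E}\|\bar{e}_1\| \leq \sigma/\sqrt{K}$. This equals $\sigma^2/K$ only in the regime $\sigma^2 \leq K$, and I do not see any route from Assumptions \ref{Ass: Lip_Smoothness}--\ref{Ass: Unbiased_Var_Grad} that yields the rate $\sigma^2/K$ on the unsquared norm without an extra hypothesis of this kind. Two internal consistency checks strongly suggest that the displayed statement is a typographical variant of $\mathbb{E}\|\bar{e}_1\|^2 \leq \sigma^2/K$: the value $\sigma^2/K$ is precisely the second-moment bound produced by averaging $K$ independent zero-mean variance-$\sigma^2$ noise terms, and the inequality labelled $(b)$ in the proof of Theorem \ref{Thm: AD_Convergence_Main} substitutes the output of this lemma into the term $\frac{K}{20 L^2 \eta_0}\mathbb{E}\|\bar{e}_1\|^2$ (not $\mathbb{E}\|\bar{e}_1\|$) to produce $\frac{w_0^{1/3}\sigma^2}{20 L^2 \bar{\kappa}}$, which is algebraically consistent only with the squared-norm reading. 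My plan is therefore to present the squared-norm calculation above as the genuine content, and to explicitly flag that under the stated assumptions the tight unsquared-norm bound derivable is $\mathbb{E}\|\bar{e}_1\| \leq \sigma/\sqrt{K}$, which coincides with the claimed $\sigma^2/K$ only when $\sigma^2 \leq K$.
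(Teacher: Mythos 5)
Your proof is correct and follows essentially the same route as the paper's: decompose $\bar{e}_1$ into an average of $K$ independent zero-mean terms $Z_k$, kill the cross terms via unbiasedness and independence across workers, and bound each diagonal term by $\sigma^2$ to obtain $\mathbb{E}\|\bar{e}_1\|^2 \leq \sigma^2/K$. Your diagnosis of the unsquared norm in the statement as a typo is also confirmed by the paper itself: its proof of this lemma manifestly bounds $\mathbb{E}\|\bar{e}_1\|^2$, and step $(b)$ in the proof of Theorem \ref{Thm: AD_Convergence_Main} invokes the lemma to bound the term $\frac{K}{20L^2\eta_0}\mathbb{E}\|\bar{e}_1\|^2$, exactly as you observed.
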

\begin{proof}
Using the definition of $\bar{e}_1$ we have:
\begin{align*}
\mathbb{E} \|    \bar{e}_1 \|^2 & = \mathbb{E} \| \bar{d}_1 - \nabla f(\bar{x}_1) \|^2 \\
 & \overset{(a)}{=} \mathbb{E} \bigg\| \frac{1}{K} \sum_{k=1}^K \nabla f^{(k)}(x_1^{(k)}; \xi_1^{(k)}) - \nabla f(\bar{x}_1)  \bigg\|^2 \\
 & = \mathbb{E} \bigg\| \frac{1}{K} \sum_{k=1}^K \big( \nabla f^{(k)}(\bar{x}_1; \xi_1^{(k)}) - \nabla f^{(k)}(\bar{x}_1) \big) \bigg\|^2 \\
 & \leq  \frac{1}{K^2} \sum_{k=1}^K \mathbb{E} \big\| \nabla f^{(k)}(\bar{x}_1; \xi_1^{(k)}) - \nabla f^{(k)}(\bar{x}_1)  \big\|^2  \\
 & \qquad \qquad \qquad \qquad + \frac{1}{K^2} \sum_{k,l \in [K], k \neq l} \mathbb{E} \underbrace{\langle \nabla f^{(k)}(\bar{x}_1; \xi_1^{(k)}) - \nabla f^{(k)}(\bar{x}_1) ,\nabla f^{(l)}(\bar{x}_1; \xi_1^{(l)}) - \nabla f^{(l)}(\bar{x}_1)    \rangle}_{=0}\\
& \overset{(b)}{\leq} \frac{\sigma^2}{K}.
\end{align*}
where $(a)$ follows from the definition of $\bar{d}_1$ in Algorithm \ref{Algo_AD-STORM} (and Algorithm \ref{Algo_D-STORM}) and $(b)$ follows from Assumption \ref{Ass: Unbiased_Var_Grad} and the following:

From Assumption \ref{Ass: Unbiased_Var_Grad}, given $\bar{x}_1$ we have: $\mathbb{E} \big[  \nabla f^{(k)}(\bar{x}_1;\xi_1^{(k)})  -  \nabla f^{(k)}(\bar{x}_1) )\big] = 0$, for all $k \in [K]$. Moreover, as discussed in the proof of Lemma \ref{Lem: InnerProd_AcrossNodes} above, given $\bar{x}_1$ the samples $\xi_1^{(k)}$ and $\xi_1^{(l)}$ at the $k$th and the $l$th nodes are chosen uniformly randomly, and independent of each other for all $k,l \in [K]$ and $k \neq l$.
\begin{align*}
   & \mathbb{E} \Big[ \big\langle \nabla f^{(k)}(\bar{x}_1; \xi_1^{(k)}) - \nabla f^{(k)}(\bar{x}_1) ,\nabla f^{(l)}(\bar{x}_1; \xi_1^{(l)}) - \nabla f^{(l)}(\bar{x}_1)  \big\rangle \Big] \\
   & \qquad \qquad \qquad =  \mathbb{E} \bigg[\Big\langle \underbrace{\mathbb{E} \big[  \nabla f^{(k)}(\bar{x}_1; \xi_1^{(k)}) - \nabla f^{(k)}(\bar{x}_1) \Big| \bar{x}_1 \big]}_{=0} ,\underbrace{\mathbb{E} \big[ \nabla f^{(l)}(\bar{x}_1; \xi_1^{(l)}) - \nabla f^{(l)}(\bar{x}_1)  \Big| \bar{x}_1\big]}_{=0}\Big\rangle \bigg] \\
   &= 0. 
\end{align*}
Therefore, we have the proof.
\end{proof}

 \section{D-STORM}
\begin{lemma}
\label{Lem: InnerProduct_e_t_Grad}
For $\bar{e}_{t-1} = \bar{d}_{t-1} - \nabla f(\bar{x}_{t-1})$ we have
\begin{align*}
\mathbb{E} \left\langle  (1 - a_t) \bar{e}_{t-1}, \frac{1}{K}\sum_{k = 1}^K \left[\left( \nabla f^{(k)}(\bar{x}_t;\xi_t^{(k)})  -  \nabla f^{(k)}(\bar{x}_t) \right)    - (1 - a_t) \left( \nabla f^{(k)}(\bar{x}_{t - 1}; \xi_t^{(k)}) - \nabla f^{(k)}(\bar{x}_{t - 1})\right) \right]   \right\rangle = 0
\end{align*}
\end{lemma}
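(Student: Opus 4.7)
The plan is to use the tower property of conditional expectation, conditioning on the entire history of iterates and samples up to and including time $t-1$ (call the corresponding $\sigma$-algebra $\mathcal{F}_{t-1}$), and then show that the inner conditional expectation vanishes by the unbiasedness assumption on the stochastic gradients. This is essentially the non-adaptive analogue of Lemma~\ref{Lem: AD_InnerProduct_e_t_Grad}, but without the $1/\eta_{t-1}$ factor, so the argument should be a direct transcription.

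First I would observe that, given the update rule in Step~6 of Algorithm~\ref{Algo_D-STORM}, the averaged iterate $\bar{x}_t = \bar{x}_{t-1} - \eta_{t-1}\bar{d}_{t-1}$ is $\mathcal{F}_{t-1}$-measurable, since in the non-adaptive algorithm $\eta_{t-1}$ is a deterministic schedule and $\bar{d}_{t-1}$ depends only on data drawn up to and including time $t-1$. Consequently, both $\bar{e}_{t-1} = \bar{d}_{t-1} - \nabla f(\bar{x}_{t-1})$ and the momentum parameter $a_t = c\eta_{t-1}^2$ are $\mathcal{F}_{t-1}$-measurable, so the factor $(1-a_t)\bar{e}_{t-1}$ can be pulled out of any conditional expectation given $\mathcal{F}_{t-1}$.

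Next I would invoke Assumption~\ref{Ass: Unbiased_Var_Grad}(1), which asserts $\mathbb{E}[\nabla f^{(k)}(x;\xi_t^{(k)}) \mid \mathcal{F}_{t-1}] = \nabla f^{(k)}(x)$ for every $\mathcal{F}_{t-1}$-measurable $x$. Applying this to both $\bar{x}_t$ and $\bar{x}_{t-1}$ gives
\begin{align*}
\mathbb{E}\bigl[\nabla f^{(k)}(\bar{x}_t;\xi_t^{(k)}) - \nabla f^{(k)}(\bar{x}_t) \,\big|\, \mathcal{F}_{t-1}\bigr] &= 0, \\
\mathbb{E}\bigl[\nabla f^{(k)}(\bar{x}_{t-1};\xi_t^{(k)}) - \nabla f^{(k)}(\bar{x}_{t-1}) \,\big|\, \mathcal{F}_{t-1}\bigr] &= 0,
\end{align*}
for every $k \in [K]$. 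By linearity, the conditional expectation of the bracketed sum over $k$ is the zero vector, so the conditional inner product is zero as well. Taking outer expectation via the tower property closes the argument.

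The only point that requires any care is the measurability setup: one must remember that $\bar{x}_t$ is computed from $\bar{x}_{t-1}$ and $\bar{d}_{t-1}$ \emph{before} the fresh samples $\{\xi_t^{(k)}\}_{k=1}^K$ are drawn, so it is $\mathcal{F}_{t-1}$-measurable and the unbiasedness assumption applies. Beyond this bookkeeping, there is no substantive obstacle.
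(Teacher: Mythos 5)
Your proposal is correct and follows essentially the same route as the paper's proof: both condition on the history through time $t-1$ (the paper phrases this as ``given $\bar{x}_t$ and the past''), pull out the $\mathcal{F}_{t-1}$-measurable factor $(1-a_t)\bar{e}_{t-1}$, and kill the remaining conditional expectation term-by-term using the unbiasedness in Assumption~\ref{Ass: Unbiased_Var_Grad}. Your explicit remark that $\bar{x}_t$ is determined before the fresh samples $\{\xi_t^{(k)}\}$ are drawn is exactly the measurability point the paper relies on implicitly.
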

\begin{proof}
Given $\bar{x}_t$ and the past $\bar{e}_{t-1}$ which is given as  $\bar{d}_{t-1} - \nabla f(\bar{x}_{t-1})$ is fixed. Therefore, we can write
\begin{align*}
& \mathbb{E} \left\langle  (1 - a_t) \bar{e}_{t-1}, \frac{1}{K}\sum_{k = 1}^K \left[\left( \nabla f^{(k)}(\bar{x}_t;\xi_t^{(k)})  -  \nabla f^{(k)}(\bar{x}_t) \right)    - (1 - a_t) \left( \nabla f^{(k)}(\bar{x}_{t - 1}; \xi_t^{(k)}) - \nabla f^{(k)}(\bar{x}_{t - 1})\right) \right]   \right\rangle \\
& = \mathbb{E} \bigg\langle  (1 - a_t) \bar{e}_{t-1}, \frac{1}{K}\sum_{k = 1}^K \mathbb{E}\Big[\big( \nabla f^{(k)}(\bar{x}_t;\xi_t^{(k)}) -  \nabla f^{(k)}(\bar{x}_t) \big) \\
& \qquad \qquad \qquad \qquad \qquad \qquad \qquad \qquad \qquad    - (1 - a_t) \left( \nabla f^{(k)}(\bar{x}_{t - 1}; \xi_t^{(k)}) - \nabla f^{(k)}(\bar{x}_{t - 1})\right) \bigg| \bar{x_t}~\text{and past} \Big]  \bigg\rangle.
\end{align*}
Note from Assumption \ref{Ass: Unbiased_Var_Grad}, given $\bar{x}_t$ and the past we have: $\mathbb{E} [ \nabla f(\bar{x}_t ; \xi_t^{(k)})] =  \nabla f(\bar{x}_t)$ for all $k \in [K]$. This implies that we have: 
$$\mathbb{E} \left[ \left( \nabla f^{(k)}(\bar{x}_t;\xi_t^{(k)})  -  \nabla f^{(k)}(\bar{x}_t) \right)    - (1 - a_t) \left( \nabla f^{(k)}(\bar{x}_{t - 1}; \xi_t^{(k)}) - \nabla f^{(k)}(\bar{x}_{t - 1})\right) \big| \bar{x}_t~\text{and past} \right] = 0$$
for all $k \in [K]$.

Therefore, we have the result.
\end{proof}

\begin{lemma}
\label{Lem: InnerProd_AcrossNodes}
For $k, l \in [K]$ and $k \neq l$, we have
\begin{align*}
&  \mathbb{E} \bigg\langle   \left( \nabla f^{(k)}(\bar{x}_t;\xi_t^{(k)})  -  \nabla f^{(k)}(\bar{x}_t) \right)    - (1 - a_t) \left( \nabla f^{(k)}(\bar{x}_{t - 1}; \xi_t^{(k)}) - \nabla f^{(k)}(\bar{x}_{t - 1})\right),\\
   & \qquad \qquad \qquad  \qquad  \quad    \qquad \quad \left( \nabla f^{(l)}(\bar{x}_t;\xi_t^{(l)})  -  \nabla f^{(l)}(\bar{x}_t) \right)    - (1 - a_t) \left( \nabla f^{(l)}(\bar{x}_{t - 1}; \xi_t^{(l)}) - \nabla f^{(l)}(\bar{x}_{t - 1})\right) \bigg\rangle =0
\end{align*}
\end{lemma}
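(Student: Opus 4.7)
The plan is to mirror the proof of Lemma \ref{Lem: AD_InnerProd_AcrossNodes}$(i)$, noting that the situation here is actually simpler because in D-STORM the step sizes $\eta_t$ are deterministic (they depend only on $w_t$ and $\sigma^2 t$, no stochastic $\bar G_t^2$), so the momentum parameter $a_t = c\eta_{t-1}^2$ is deterministic as well. I would invoke the tower property of conditional expectation, conditioning on $\bar{x}_t$, $\bar{x}_{t-1}$, and the full history of samples $\{\xi_s^{(k)}\}_{s \le t-1, k \in [K]}$. Given this conditioning, the only randomness remaining is $\xi_t^{(k)}$ and $\xi_t^{(l)}$.

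Next, I would use the fact, from Assumption \ref{Ass: Unbiased_Var_Grad}, that each node draws its sample $\xi_t^{(k)}$ independently across $k \in [K]$. Because $k \neq l$, conditional on the past, $\xi_t^{(k)}$ and $\xi_t^{(l)}$ are independent. The two vectors inside the inner product are respectively functions of $\xi_t^{(k)}$ and $\xi_t^{(l)}$ alone (all other quantities, including $a_t$ and the deterministic gradients $\nabla f^{(k)}(\bar{x}_s)$, are measurable with respect to the conditioning sigma-algebra). Hence the conditional expectation of the inner product factors as
\begin{align*}
\mathbb{E}\Bigl[ \langle A^{(k)}, A^{(l)} \rangle \,\Big|\, \mathcal{F}_{t-1} \Bigr] = \Bigl\langle \mathbb{E}[A^{(k)} \mid \mathcal{F}_{t-1}],\; \mathbb{E}[A^{(l)} \mid \mathcal{F}_{t-1}] \Bigr\rangle,
\end{align*}
where $A^{(k)}$ denotes the $k$-th bracketed vector in the statement.

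Then I would verify that each factor $\mathbb{E}[A^{(k)} \mid \mathcal{F}_{t-1}]$ equals zero. By the unbiasedness part of Assumption \ref{Ass: Unbiased_Var_Grad}, $\mathbb{E}[\nabla f^{(k)}(\bar{x}_t; \xi_t^{(k)}) \mid \mathcal{F}_{t-1}] = \nabla f^{(k)}(\bar{x}_t)$ and $\mathbb{E}[\nabla f^{(k)}(\bar{x}_{t-1}; \xi_t^{(k)}) \mid \mathcal{F}_{t-1}] = \nabla f^{(k)}(\bar{x}_{t-1})$, so the two differences inside $A^{(k)}$ each vanish in conditional expectation and thus so does the $(1-a_t)$-weighted combination. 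Taking the outer expectation then yields zero, completing the proof.

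The argument is essentially the same as the two cases of Lemma \ref{Lem: AD_InnerProd_AcrossNodes} and presents no real obstacle; the only subtlety worth being explicit about is that the deterministic step size in D-STORM removes any need to worry about measurability of $a_t$ with respect to the conditioning sigma-algebra, unlike the AD-STORM counterpart where one must justify that $\eta_{t-1}$ is fixed given the past.
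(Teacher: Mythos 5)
Your proposal is correct and follows essentially the same route as the paper's proof: condition on $\bar{x}_t$ and the past, use the conditional independence of $\xi_t^{(k)}$ and $\xi_t^{(l)}$ for $k \neq l$ to factor the conditional expectation of the inner product, and then apply the unbiasedness in Assumption \ref{Ass: Unbiased_Var_Grad} to show each factor vanishes. Your added remark that the deterministic step size in D-STORM makes $a_t$ trivially measurable with respect to the conditioning $\sigma$-algebra is a correct and worthwhile clarification, but it does not change the substance of the argument.
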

\begin{proof}
Notice that given $\bar{x}_t$ and the past, the samples $\xi_t^{(k)}$ and $\xi_t^{(l)}$ at the $k$th and the $l$th nodes are chosen uniformly randomly, and independent of each other for all $k,l \in [K]$ and $k \neq l$. Therefore, we have;
\begin{align*}
& \mathbb{E} \bigg\langle   \left( \nabla f^{(k)}(\bar{x}_t;\xi_t^{(k)})  -  \nabla f^{(k)}(\bar{x}_t) \right)    - (1 - a_t) \left( \nabla f^{(k)}(\bar{x}_{t - 1}; \xi_t^{(k)}) - \nabla f^{(k)}(\bar{x}_{t - 1})\right),\\
   & \qquad \qquad \qquad  \qquad  \quad    \qquad \quad \left( \nabla f^{(l)}(\bar{x}_t;\xi_t^{(l)})  -  \nabla f^{(l)}(\bar{x}_t) \right)    - (1 - a_t) \left( \nabla f^{(l)}(\bar{x}_{t - 1}; \xi_t^{(l)}) - \nabla f^{(l)}(\bar{x}_{t - 1})\right) \bigg\rangle\\
& = \mathbb{E} \bigg\langle \mathbb{E} \left[ \left( \nabla f^{(k)}(\bar{x}_t;\xi_t^{(k)})  -  \nabla f^{(k)}(\bar{x}_t) \right)    - (1 - a_t) \left( \nabla f^{(k)}(\bar{x}_{t - 1}; \xi_t^{(k)}) - \nabla f^{(k)}(\bar{x}_{t - 1})\right) \bigg| \bar{x}_t~\text{and past}\right],\\
   & \qquad  \qquad    \qquad  \mathbb{E} \left[ \left( \nabla f^{(l)}(\bar{x}_t;\xi_t^{(l)})  -  \nabla f^{(l)}(\bar{x}_t) \right)    - (1 - a_t) \left( \nabla f^{(l)}(\bar{x}_{t - 1}; \xi_t^{(l)}) - \nabla f^{(l)}(\bar{x}_{t - 1})\right) \bigg| \bar{x}_t~\text{and past}\right] \bigg\rangle 
\end{align*}
Note from Assumption \ref{Ass: Unbiased_Var_Grad}, given $\bar{x}_t$ and the past we have: $\mathbb{E} [ \nabla f(\bar{x}_t ; \xi_t^{(k)}) ] =  \nabla f(\bar{x}_t)$ for all $k \in [K]$. This implies that we have: 
$$\mathbb{E} \left[ \left( \nabla f^{(k)}(\bar{x}_t;\xi_t^{(k)})  -  \nabla f^{(k)}(\bar{x}_t) \right)    - (1 - a_t) \left( \nabla f^{(k)}(\bar{x}_{t - 1}; \xi_t^{(k)}) - \nabla f^{(k)}(\bar{x}_{t - 1})\right) \Big| \bar{x}_t~\text{and past} \right] = 0,$$
for all $k \in [K]$.

Therefore, we have the proof.
\end{proof}

\begin{lemma} For any $k \in [K]$, we have
\label{Lem: Mean_Variance_Bound}
\begin{align*}
    & \mathbb{E} \big\| \big( \nabla f^{(k)}(\bar{x}_{t};\xi_t^{(k)})  -  \nabla f^{(k)}(\bar{x}_{t - 1}; \xi_t^{(k)}) \big)    -  \big(\nabla f^{(k)}(\bar{x}_{t}) - \nabla f^{(k)}(\bar{x}_{t - 1})\big) \big\|^2 \\
    & \qquad  \qquad \qquad \qquad \qquad \qquad \qquad \qquad \qquad \qquad \qquad \leq  \mathbb{E} \big\|   \nabla f^{(k)}(\bar{x}_{t};\xi_t^{(k)})  -  \nabla f^{(k)}(\bar{x}_{t - 1}; \xi_t^{(k)})   \big\|^2. 
\end{align*}
\end{lemma}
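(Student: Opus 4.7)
The plan is to recognize this as the standard fact that the variance of a random vector is bounded by its second moment, namely $\mathbb{E}\|X - \mathbb{E}[X]\|^2 \le \mathbb{E}\|X\|^2$, applied conditionally on the past. Let $Y := \nabla f^{(k)}(\bar{x}_{t};\xi_t^{(k)}) - \nabla f^{(k)}(\bar{x}_{t-1};\xi_t^{(k)})$ and $Z := \nabla f^{(k)}(\bar{x}_{t}) - \nabla f^{(k)}(\bar{x}_{t-1})$. The key observation is that, conditioned on $\bar{x}_t,\bar{x}_{t-1}$ and the past, Assumption \ref{Ass: Unbiased_Var_Grad}(1) gives $\mathbb{E}[\nabla f^{(k)}(\bar{x}_t;\xi_t^{(k)}) \mid \bar{x}_t,\text{past}] = \nabla f^{(k)}(\bar{x}_t)$ and likewise for $\bar{x}_{t-1}$, so $\mathbb{E}[Y \mid \text{past}] = Z$. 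In particular, $Z$ is measurable with respect to the past.

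First, I would expand the squared norm on the left-hand side, conditionally on the past, as
\begin{align*}
\mathbb{E}\big[\|Y - Z\|^2 \,\big|\, \text{past}\big] = \mathbb{E}\big[\|Y\|^2 \,\big|\, \text{past}\big] - 2 \big\langle \mathbb{E}[Y \mid \text{past}],\, Z \big\rangle + \|Z\|^2.
\end{align*}
Substituting $\mathbb{E}[Y \mid \text{past}] = Z$ collapses the cross term, giving the identity $\mathbb{E}[\|Y-Z\|^2 \mid \text{past}] = \mathbb{E}[\|Y\|^2 \mid \text{past}] - \|Z\|^2$. Taking the outer expectation and dropping the nonnegative term $\mathbb{E}\|Z\|^2 \ge 0$ yields the claimed inequality.

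There is no real obstacle here: the only care needed is tracking the conditioning, specifically that $\bar{x}_t$ and $\bar{x}_{t-1}$ (and therefore $Z$) are determined by the history strictly prior to the sampling of $\xi_t^{(k)}$, which is the same measurability argument used inside the proofs of Lemma \ref{Lem: InnerProduct_e_t_Grad} and Lemma \ref{Lem: InnerProd_AcrossNodes} earlier in the appendix. Once that is in place, the computation is a one-line expansion and the result follows.
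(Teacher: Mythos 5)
Your proof is correct and follows essentially the same route as the paper: both reduce the claim to the variance-bounded-by-second-moment identity $\mathbb{E}\|Y-Z\|^2 = \mathbb{E}\|Y\|^2 - \mathbb{E}\|Z\|^2$ by expanding the square and using $\mathbb{E}[Y \mid \bar{x}_t, \text{past}] = Z$ (via Assumption \ref{Ass: Unbiased_Var_Grad}) to collapse the cross term. The only cosmetic difference is that you condition first and then take the outer expectation, while the paper expands unconditionally and applies the tower property to the inner-product term; the substance is identical.
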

\begin{proof}
We have:
\begin{align*}
    & \mathbb{E} \big\| \big( \nabla f^{(k)}(\bar{x}_{t};\xi_t^{(k)})  -  \nabla f^{(k)}(\bar{x}_{t - 1}; \xi_t^{(k)}) \big)    -  \big(\nabla f^{(k)}(\bar{x}_{t}) - \nabla f^{(k)}(\bar{x}_{t - 1})\big) \big\|^2 \\
    & \qquad  = \mathbb{E} \big\|\nabla f^{(k)}(\bar{x}_{t};\xi_t^{(k)})  -  \nabla f^{(k)}(\bar{x}_{t - 1}; \xi_t^{(k)}) \big\|^2 + \mathbb{E} \big\|  \nabla f^{(k)}(\bar{x}_{t}) - \nabla f^{(k)}(\bar{x}_{t - 1})  \big\|^2 \\
    &\qquad \qquad \qquad \qquad - 2 \mathbb{E} \big\langle  \nabla f^{(k)}(\bar{x}_{t};\xi_t^{(k)})  -  \nabla f^{(k)}(\bar{x}_{t - 1}; \xi_t^{(k)})  ,   \nabla f^{(k)}(\bar{x}_{t}) - \nabla f^{(k)}(\bar{x}_{t - 1})  \big\rangle \\
    &  \qquad  \overset{(a)}{=} \mathbb{E} \big\|\nabla f^{(k)}(\bar{x}_{t};\xi_t^{(k)})  -  \nabla f^{(k)}(\bar{x}_{t - 1}; \xi_t^{(k)}) \big\|^2 - \mathbb{E} \big\|  \nabla f^{(k)}(\bar{x}_{t}) - \nabla f^{(k)}(\bar{x}_{t - 1})  \big\|^2\\
    & \qquad\leq \mathbb{E} \big\|\nabla f^{(k)}(\bar{x}_{t};\xi_t^{(k)}) -  \nabla f^{(k)}(\bar{x}_{t - 1}; \xi_t^{(k)}) \big\|^2.
\end{align*}
where $(a)$ follows from the discussion similar to in Lemma \ref{Lem: InnerProd_AcrossNodes} above as: Note from Assumption \ref{Ass: Unbiased_Var_Grad}, given $\bar{x}_t$ and the past we have: $\mathbb{E} [ \nabla f(\bar{x}_t ; \xi_t^{(k)}) ] =  \nabla f(\bar{x}_t)$ for all $k \in [K]$. This implies that we have: 
$$\mathbb{E} \big[ \nabla f^{(k)}(\bar{x}_{t};\xi_t^{(k)})  -  \nabla f^{(k)}(\bar{x}_{t - 1}; \xi_t^{(k)}) \big]  = \mathbb{E} \big[ \nabla f^{(k)}(\bar{x}_{t} )  -  \nabla f^{(k)}(\bar{x}_{t - 1} ) \big].$$
Therefore, we can write:
\begin{align*}
    & \mathbb{E} \big\langle  \nabla f^{(k)}(\bar{x}_{t};\xi_t^{(k)})  -  \nabla f^{(k)}(\bar{x}_{t - 1}; \xi_t^{(k)})  ,   \nabla f^{(k)}(\bar{x}_{t}) - \nabla f^{(k)}(\bar{x}_{t - 1})  \big\rangle \\
    & \qquad \qquad \qquad \qquad=  \mathbb{E} \big\langle   \mathbb{E} \big[ \nabla f^{(k)}(\bar{x}_{t};\xi_t^{(k)})  -  \nabla f^{(k)}(\bar{x}_{t - 1} ; \xi_t^{(k)}) \big| \bar{x}_t~\text{and past} \big] ,   \nabla f^{(k)}(\bar{x}_{t}) - \nabla f^{(k)}(\bar{x}_{t - 1})  \big\rangle \\
    & \qquad \qquad \qquad \qquad = \mathbb{E} \|  \nabla f^{(k)}(\bar{x}_{t}) - \nabla f^{(k)}(\bar{x}_{t - 1})  \|^2.
\end{align*}
Hence, we have the proof.
\end{proof}

\section{}

\begin{lemma}[From \cite{Cutkosky_NIPS2019}]
\label{Lem: AD_Sum_1overT}
Let $a_0 > 0$ and $a_1,a_2, \ldots, a_T \geq 0$. We have
$$\sum_{t=1}^T \frac{a_t}{a_0 + \sum_{i=t}^t a_i} \leq \ln \bigg(1 + \frac{\sum_{i=1}^t a_i}{a_0} \bigg).$$
\end{lemma}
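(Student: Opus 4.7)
The plan is to establish the bound by recognizing each summand as an underestimate of a logarithmic increment, and then telescoping. Set $S_0 = a_0$ and $S_t = a_0 + \sum_{i=1}^t a_i$ for $t \geq 1$, so that $S_t \geq S_{t-1} > 0$ (since $a_0 > 0$ and $a_t \geq 0$) and $a_t = S_t - S_{t-1}$.

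First I would bound each individual summand by a logarithmic difference. The key elementary inequality is that, for any $0 < y \leq x$,
\begin{equation*}
\frac{x - y}{x} \leq \ln\!\left(\frac{x}{y}\right),
\end{equation*}
which is just the standard estimate $1 - u \leq -\ln(u)$ applied at $u = y/x \in (0, 1]$. Equivalently, this is visible from the integral representation: since $1/u$ is decreasing, $\frac{1}{S_t}(S_t - S_{t-1}) \leq \int_{S_{t-1}}^{S_t} \frac{du}{u} = \ln(S_t / S_{t-1})$. Applying this with $x = S_t$ and $y = S_{t-1}$ yields
\begin{equation*}
\frac{a_t}{S_t} = \frac{S_t - S_{t-1}}{S_t} \leq \ln\!\left(\frac{S_t}{S_{t-1}}\right),
\end{equation*}
where the case $a_t = 0$ is handled trivially (both sides vanish and $S_t = S_{t-1}$).

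Next I would sum over $t = 1, \ldots, T$ and telescope the right-hand side:
\begin{equation*}
\sum_{t=1}^T \frac{a_t}{a_0 + \sum_{i=1}^t a_i} = \sum_{t=1}^T \frac{a_t}{S_t} \leq \sum_{t=1}^T \ln\!\left(\frac{S_t}{S_{t-1}}\right) = \ln\!\left(\frac{S_T}{S_0}\right) = \ln\!\left(1 + \frac{\sum_{i=1}^T a_i}{a_0}\right),
\end{equation*}
which is exactly the claimed bound.

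There is essentially no obstacle here; the only subtleties are ensuring strict positivity of the denominators (guaranteed by $a_0 > 0$ and $a_i \geq 0$) and verifying the scalar inequality $\frac{x-y}{x} \leq \ln(x/y)$. Both are routine. If desired, the scalar inequality can be made completely self-contained by the monotonicity argument $\frac{a_t}{S_t} \leq \int_{S_{t-1}}^{S_t} u^{-1}\,du$, which avoids even citing $\ln(1+x) \leq x$ explicitly.
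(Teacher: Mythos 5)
Your proof is correct: the telescoping bound $\frac{a_t}{S_t} \leq \int_{S_{t-1}}^{S_t} u^{-1}\,du$ is exactly the standard argument, and it is the same integral-comparison proof given in the cited reference \cite{Cutkosky_NIPS2019} (the paper itself states this lemma without proof, importing it from there). The only remark worth adding is that the statement as printed contains typos ($\sum_{i=t}^{t}a_i$ and an upper limit $t$ instead of $T$ on the right-hand side), which you have correctly read as the intended $S_t = a_0 + \sum_{i=1}^{t} a_i$ and $\ln\bigl(1 + \sum_{i=1}^{T} a_i / a_0\bigr)$.
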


% \begin{lemma}
% \label{Lem: Sum_1overT}
% We have for $T \geq 1$:
% \begin{align*}
% \sum_{t = 1}^{T} \frac{1}{1 + t} \leq   \ln(T + 1).
% \end{align*}
% \end{lemma}
% \begin{proof}
% For any $t \in \mathbb{N}$ and  $x = \frac{1}{t}$, we have $x > 0$ using Lemma \ref{Lem: Log_Inequality}, we get:
% \begin{align*}
% \frac{1}{1 + t}  & \leq  \ln\bigg(1 + \frac{1}{t}\bigg)  = \ln (1 + t ) - \ln t.
% \end{align*}
% Adding the above for $t = 1$ to $T$, we get
% \begin{align*}
%     \sum_{t = 1}^T \frac{1}{1 + t} \leq \ln(T + 1).
% \end{align*}
% Therefore, we have the result. 
% \end{proof}
% \begin{lemma}
% \label{Lem: Log_Inequality}
% For any $x > 0$ we have:
% \begin{align*}
%      \frac{x}{1 + x} \leq  \ln(1 + x).
% \end{align*}
% \end{lemma}

	\begin{lemma}
		\label{Lem: Norm_Ineq}
		For $X_1, X_2, \ldots, X_n \in \mathbb{R}^d$, we have 
		\begin{align*}
			\|X_1 + X_2 + \ldots + X_n \|^2 \leq n \| X_1\|^2 + n \| X_2\|^2+ \ldots + n \| X_n\|^2.
		\end{align*}
	\end{lemma}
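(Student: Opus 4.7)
The plan is to prove this standard norm inequality via convexity (equivalently, Cauchy--Schwarz). The cleanest route is to apply Jensen's inequality to the convex function $\phi(y) = \|y\|^2$ on $\mathbb{R}^d$. Writing $\frac{1}{n}\sum_{i=1}^n X_i$ as a uniform convex combination of the $X_i$, Jensen's inequality gives $\bigl\|\frac{1}{n}\sum_{i=1}^n X_i\bigr\|^2 \leq \frac{1}{n}\sum_{i=1}^n \|X_i\|^2$, after which multiplying both sides by $n^2$ yields the claim.

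An alternative route, essentially equivalent, is to expand the squared norm via the inner product and bound cross terms using Young's inequality. Specifically,
\[
\Bigl\| \sum_{i=1}^n X_i \Bigr\|^2 = \sum_{i=1}^n \|X_i\|^2 + \sum_{i \neq j} \langle X_i, X_j \rangle,
\]
and then applying $\langle X_i, X_j\rangle \leq \frac{1}{2}\bigl(\|X_i\|^2 + \|X_j\|^2\bigr)$ term-by-term contributes at most $(n-1)\sum_{i=1}^n \|X_i\|^2$ to the cross-term sum, giving a total bound of $n\sum_{i=1}^n \|X_i\|^2$.

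I would present the Jensen-based proof because it is one line and avoids bookkeeping of cross terms. There is no real obstacle here: the lemma is a textbook fact, and the only thing worth mentioning is that the factor $n$ is tight (attained when $X_1 = \cdots = X_n$). No assumptions beyond $X_i \in \mathbb{R}^d$ are required, and the inequality holds for any Hilbert-space norm induced by an inner product.
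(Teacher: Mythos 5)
Your proof is correct. The paper states Lemma \ref{Lem: Norm_Ineq} without proof (it is a textbook fact, equivalent to Cauchy--Schwarz), so there is nothing to compare against; either of your two routes --- Jensen's inequality applied to the convex map $y \mapsto \|y\|^2$, or expanding the square and bounding each cross term by $\langle X_i, X_j\rangle \leq \tfrac{1}{2}(\|X_i\|^2 + \|X_j\|^2)$ --- is a complete and standard argument, and your remark that the constant $n$ is attained at $X_1 = \cdots = X_n$ is accurate.
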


\end{document}